\newtheorem*{maintheorem*}{Main Theorem}
\newenvironment{mythm}[1]
  {\innercustomthm}
  {\endinnercustomthm}
\newenvironment{mycorollary}[1]
  {\innercustomcorollary}
  {\endinnercustomcorollary}
\newtheorem{theorem}{Theorem}[section]
\newtheorem{lemma}[theorem]{Lemma}
\newtheorem{corollary}[theorem]{Corollary}
\newtheorem{proposition}[theorem]{Proposition}
\theoremstyle{definition}
\newtheorem{definition}[theorem]{Definition}
\theoremstyle{remark}
\newtheorem{remark}[theorem]{Remark}
\numberwithin{equation}{section}
\newcommand{\scrC}{\mathscr{C}}
\newcommand{\Hom}{\mathrm{Hom}}
\newcommand{\Rom}[1]{\uppercase\expandafter{\romannumeral #1\relax}}
\newcommand{\hgr}{\text{gr}_{h}}
\newcommand{\qgr}{\text{gr}_{q}}
\newcommand{\fgr}{\text{gr}_{f}}
\newcommand{\Ob}{\text{Ob}}
\newcommand{\skspec}{\mathcal{X}_{Sk}}
\newcommand{\khspec}{\mathcal{X}_{Kh}}
\newcommand{\khspecgrad}[1]{\khspec^{#1}}
\newcommand{\skspecgrad}[2]{\skspec^{#1,#2}}
\newcommand{\calM}{\mathcal{M}}
\newcommand{\calF}{\mathcal{F}}
\title[Transverse Invariant in KSH]{Transverse invariant as Khovanov skein spectrum at its extreme Alexander grading}
\author{Nilangshu Bhattacharyya, Adithyan Pandikkadan}
\begin{document}
\maketitle

\begin{abstract}
We develop a space-level formulation of Khovanov skein homology by constructing a stable homotopy type for annular links. We explicitly define a cover functor from the skein flow category to the cube flow category, thereby establishing the Khovanov skein spectrum, $\mathcal{X}_{Sk}(L)$. This spectrum extends the framework of Lipshitz and Sarkar’s Khovanov spectrum $\mathcal{X}_{Kh}(L)$ \cite{LipSar2014} and provides new avenues for understanding transverse link invariants in the annular setting. Furthermore, we establish a map from the Khovanov spectrum to the skein spectrum, which, at extreme gradings, recovers the cohomotopy transverse invariant \cite{LipSarTransverse}.
\end{abstract}

\section{Introduction}\label{section 1}
Knots and links in contact manifolds play a fundamental role in low-dimensional topology and contact geometry. Understanding their classification and invariants provides deep insights into the structure of contact 3-manifolds. There are two special classes of knots in the standard contact 3-sphere, $(S^3,\xi_{{std}})$---Legendrian and transverse knots. Legendrian knots are everywhere tangent to the contact planes while transverse knots are everywhere transverse to them. 

We will restrict our attention to transverse links. There are two ``classical" invariants for transverse links---the topological link type and the \textit{self linking number}. While various knot types including the unknot, torus knots, and figure eight are completely classified by these invariants, such a classification is still unknown for most of the knot types. This calls for further introducing more refined invariants. 

The first candidate for such a non-classical invariant in the context of Khovanov homology called the \textit{transverse link invariant} was given by Olga Plamevskaya \cite{Olga2006}. Given the link diagram, $D_L$ of a transverse link, $L \subset (S^3,\xi_{std})$, the invariant is defined as a distinguished element, $\psi_{Kh}(L)$ in the Khovanov Homology \cite{OriginalKhovanovPaper, BarNatanKhovanov} of the link. While the effectiveness of this invariant remains an open question, other transverse invariants of different flavors have since been developed. Notably, Ozsváth, Szabó, and Thurston \cite{grid_invariant}, along with Lisca, Ozsváth, Stipsicz, and Szabó \cite{loss_invariant}, have constructed analogous invariants within the framework of knot Floer homology. Additionally, several refinements of the Plamenevskaya invariant have been developed, which could be more effective than the original invariant. Lipshitz and Sarkar \cite{LipSar2014} recently introduced a space-level version of Khovanov homology, called the Khovanov Spectrum. Later Lipschitz, Ng and Sarkar \cite{LipSarTransverse} showed that the transverse link invariant admits a refinement as a stable cohomotopy element of the Khovanov Spectrum which yields several other computable auxiliary invariants.

In our work, we consider links which can be represented as a braid closure of a transverse link. These can be naturally considered as embedded inside a thickened annulus, $L\subset A\times I \subset S^3$. M. M. Asaeda, J. H. Przytycki, and A. S. Sikora \cite{AsaedaPrzySikora2004} introduced a tri-graded cohomology for annular links,  called the Khovanov skein homology. Later, L. Roberts \cite{Rob2013}, gave an alternate formulation that directly relates this to Khovanov homology. There is a version of transverse invariant within this framework as well, denoted $\psi_{Sk}(L)$, which is a distinguished element in the Khovanov skein homology. Roberts also proved the existence of a link surgery spectral sequence from the Skein homology, $H^{h,q,f}_{Sk}(L)$ to knot Floer homology of the double-branched cover. A difference in skein homology is the additional grading, which arises from viewing the knot within the annulus. This grading corresponds to twice the Alexander grading induced by the braid axis in the knot Floer complex - hence the term ``extreme Alexander grading" in the title. Furthermore, Lawrence Roberts \cite{Rob2013} showed that under the spectral sequence, $\psi_{Sk}(L)$ is sent to a nonzero generator in the knot Floer complex. We aim to further explore this direction in future work.\par
In this paper, we develop a space-level formulation of Khovanov skein homology by explicitly defining a cover functor $\mathscr{F}$ from the skein flow category to the cube flow category, $\scrC_{Sk}(D_{L}) \to \scrC_{C}(n(D_{L}))$. A more detailed description of this construction can be found in Section \ref{section 3}. Utilizing this cover functor, we establish a stable homotopy type for annular links. It is worth noting that the annular Khovanov spectrum can also be constructed via an appropriate functor from the cube category $\underline{2}^{n}$ to the Burnside category $\mathscr{B}$; for further details on the Burnside category approach to the annular Khovanov spectrum, see \cite{QuantumAnnularSpectrum, SL2ActionOnAnnularSpectrum}. While our construction utilizing the Cohen-Jones-Segal realization of a framed flow category is initially distinct from the Burnside category approach, the work of Lawson, Lipshitz, and Sarkar \cite{BurnsideCatApproachTowardsKhovanovStableHomotopy} establishes that both spectra are, in fact, stably homotopy equivalent. Kauffman, Nikonov, and Ogasa \cite{SpectrumForKnotInThickenedHigherGenusSurface} have also defined a stable homotopy for the homotopical Khovanov homology for links in the thickened closed surfaces with genus $g>1$. The Khovanov skein spectrum we constructed in this paper is also very similar in spirit. 
\par
Lipshitz and Sarkar \cite{LipSar2014} constructed a spectrum for links, $L\subset S^{3}$ called the Khovanov Spectrum, $\khspec(L)$  such that the reduced singular cohomology, $\widetilde{H}^h(\khspec^q(L))$ is isomorphic to the Khovanov Homology, $H^{h,q}_{Kh}(L)$. 
Given a link diagram $L\subset S^2$ with $n$ crossings, one can associate a combinatorial cube flow category, denoted $\scrC_{C}(n)$. The objects of $\scrC_{C}(n)$ correspond to the $2^n$ possible states obtained by resolving each crossing in one of two ways, labeled as $0$- and $1$-resolutions, see Figure \ref{fig:resolution}. 
\begin{figure}[htp]
    \centering
    \tikzset{every picture/.style={line width=0.75pt}} 

\begin{tikzpicture}[x=0.75pt,y=0.75pt,yscale=-1,xscale=1]

\draw  [draw opacity=0] (271.52,74.42) .. controls (264.91,70.67) and (260.45,63.57) .. (260.45,55.43) .. controls (260.45,47.18) and (265.04,39.99) .. (271.8,36.29) -- (282.27,55.43) -- cycle ; \draw   (271.52,74.42) .. controls (264.91,70.67) and (260.45,63.57) .. (260.45,55.43) .. controls (260.45,47.18) and (265.04,39.99) .. (271.8,36.29) ;  
\draw  [draw opacity=0] (233.05,36.37) .. controls (239.89,40.19) and (244.45,47.58) .. (244.24,55.96) .. controls (244.06,63.7) and (239.86,70.41) .. (233.68,74.13) -- (222.43,55.43) -- cycle ; \draw   (233.05,36.37) .. controls (239.89,40.19) and (244.45,47.58) .. (244.24,55.96) .. controls (244.06,63.7) and (239.86,70.41) .. (233.68,74.13) ;  
\draw  [draw opacity=0] (388.51,74.09) .. controls (392.26,67.48) and (399.36,63.02) .. (407.5,63.02) .. controls (415.76,63.02) and (422.94,67.6) .. (426.65,74.37) -- (407.5,84.83) -- cycle ; \draw   (388.51,74.09) .. controls (392.26,67.48) and (399.36,63.02) .. (407.5,63.02) .. controls (415.76,63.02) and (422.94,67.6) .. (426.65,74.37) ;  
\draw  [draw opacity=0] (426.57,35.61) .. controls (422.75,42.46) and (415.36,47.01) .. (406.97,46.81) .. controls (399.23,46.62) and (392.52,42.42) .. (388.8,36.24) -- (407.5,25) -- cycle ; \draw   (426.57,35.61) .. controls (422.75,42.46) and (415.36,47.01) .. (406.97,46.81) .. controls (399.23,46.62) and (392.52,42.42) .. (388.8,36.24) ;  
\draw    (310.75,74.42) -- (349.22,36.37) ;
\draw    (310.75,36.37) -- (326.85,52.31) ;
\draw    (333.12,58.48) -- (349.22,74.42) ;
\draw    (281.84,55.03) -- (302.98,55.03) ;
\draw [shift={(279.84,55.03)}, rotate = 0] [color={rgb, 255:red, 0; green, 0; blue, 0 }  ][line width=0.75]    (10.93,-3.29) .. controls (6.95,-1.4) and (3.31,-0.3) .. (0,0) .. controls (3.31,0.3) and (6.95,1.4) .. (10.93,3.29)   ;
\draw    (357.54,55.03) -- (378.68,55.03) ;
\draw [shift={(380.68,55.03)}, rotate = 180] [color={rgb, 255:red, 0; green, 0; blue, 0 }  ][line width=0.75]    (10.93,-3.29) .. controls (6.95,-1.4) and (3.31,-0.3) .. (0,0) .. controls (3.31,0.3) and (6.95,1.4) .. (10.93,3.29)   ;

\draw (290.32,34.99) node [anchor=north west][inner sep=0.75pt]  [font=\footnotesize] [align=left] {$\displaystyle 0$};
\draw (359.02,35.99) node [anchor=north west][inner sep=0.75pt]  [font=\footnotesize] [align=left] {$\displaystyle 1$};

\end{tikzpicture}
    \caption{$0$- and $1$- resolutions}
    \label{fig:resolution}
\end{figure}
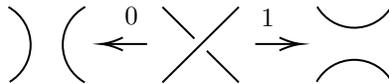

Each state $x \in \{0,1\}^{n}$ has a grading given by $\text{gr}(x)= \sum\limits_{i=1}^{n}x_{i}$. The morphisms are determined by the obvious partial order $1\succ 0$. For two distinct objects $x,y$ in $\scrC_{C}(n)$, if $x \succ y$, we define the set $\text{Hom}_{\scrC_{C}(n)}(x,y)$ to be a $\big(\text{gr}(x)-\text{gr}(y)-1\big)$-dimensional permutohedron which is a $\langle\text{gr}(x)-\text{gr}(y)-1\rangle$-manifold; otherwise, we declare it to be empty; see \cite[Definition 4.1]{LipSar2014}. We also define $\text{Hom}_{\scrC_{C}(n)}(x,x)$ to be a singleton consisting of the identity map. The cube flow category $\scrC_{C}(n)$ naturally arises as a Morse flow category corresponding to the Morse function $f_{n}:\mathbb{R}^{n} \to \mathbb{R}$, given by 
$$f_{n}(x_{1},\dots, x_{n}) = (3x_{1}^{2}-2x_{1}^3 )+ \cdots + (3x_{n}^{2}-2x_{n}^3).$$ 
Moreover, there exists a neat embedding $\imath$ and coherent framing $\psi$, which makes $\scrC_{C}(n)$ a framed flow category; see \cite[Proposition 4.12]{LipSar2014}. Lipshitz and Sarkar defined Khovanov flow category $\scrC_{Kh}(L)$ and built a cover functor $\mathscr{F}: \scrC_{Kh}(L) \to \scrC_{C}(n) $ for a choice called Ladybug matching; see \cite[Section 5]{LipSar2014} and also Section \ref{section 3}. The Khovanov flow category $\scrC_{Kh}(L)$ is a framed flow category where the framing is induced by the cover functor $\mathscr{F}$ from the framing of the cube flow category $\scrC_{C}(n)$. Taking the formal desuspension of Cohen-Jones-Segal realization $|\scrC_{Kh}|_{\widetilde{\imath},\widetilde{d}, \widetilde{\psi}}$ gives a suspension spectrum called the Khovanov spectrum $\khspec(L)$. Moreover, the stable homotopy type of $\khspec(L)$ depends only on the isotopy class of the corresponding link. Figure \ref{fig:overview of Khovanov spectra} presents a flowchart summarizing the construction of the stable homotopy type based on Cohen-Jones-Segal realization of a framed flow category.
\par
In the Khovanov skein chain complex, the key difference is that resolutions of $L$ include an additional marking that records the braid axis. So we are essentially viewing the braid diagram in the annulus $A$ rather than in $S^2$. Each resolution consists of a disjoint union of embedded circles, which can be classified as either trivial or non-trivial based on whether their homotopy classes in $\pi_{1}(A)$ are trivial or non-trivial, respectively. 

Following a similar framework as in Khovanov homology, we construct the Khovanov skein spectrum, $\skspec(L)$ such that the reduced singular cohomology, $\widetilde{H}^h(\skspec^{q,f}(L))$ is isomorphic to the Khovanov skein homology, $H^{h,q,f}_{Sk}(L)$.  We also show that the stable homotopy type of the Khovanov skein spectrum is defined up to the allowable isotopies of the link in the thickened annulus.\par

\begin{figure}[htp]
    \centering
    \input{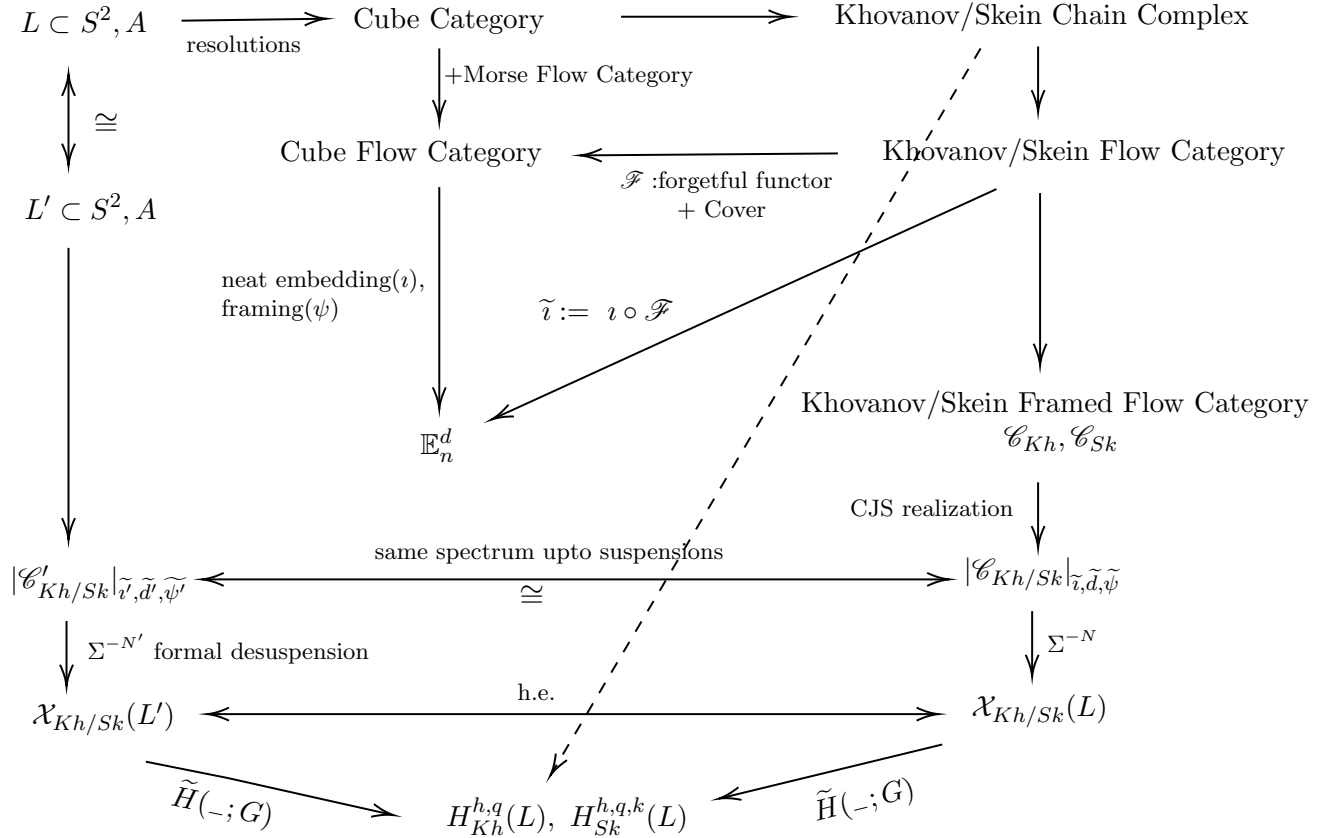}
    \caption{Flowchart describing the construction of Khovanov spectra/ Khovanov skein spectra}
    \label{fig:overview of Khovanov spectra}
\end{figure}

 \begin{mythm}{3.12}
     Let $L$ be a link diagram in annulus and $L^{'}$ is obtained from $L$ through allowable Reidemeister Moves $\Rom{1}$, $\Rom{2}$, and $\Rom{3}$ in the annulus then, $\skspecgrad{q}{f}(L)$ is stably homotopy equivalent to $\skspecgrad{q}{f}(L^{'})$. 
 \end{mythm}
For every quantum grading, $j$ we proved that there exists a map between the corresponding spectra, $\Psi^{j}(B_{L}):\khspecgrad{j}(B_{L}) \to \skspecgrad{j}{f_{\text{min}}(B_{L},j)}(B_{L})$, where $f_{\text{min}}(B_{L},j)$ is the lowest homotopical grading in the given quantum grading $j$. \par 
\begin{mythm}{4.1}
    For an oriented closed braid diagram $B_{L}\subset A$ with $b(B_{L})$ number of strands representing $L\subset A\times I$, and for each quantum grading $j$, there is a map 
    $$
    \Psi^{j}(B_{L}):\khspecgrad{j}(B_{L}) \to \skspecgrad{j}{f_{\text{min}}(B_{L},j)}(B_{L}),
    $$ 
    such that the induced map on the reduced cohomology 
    $$
    \Psi^{j}(B_{L})^{*}: \widetilde{H}^{i}\big(\skspecgrad{j}{f_{\text{min}} (B_{L},j)}(B_{L}); G\big) \to \widetilde{H}^{i}\big(\khspecgrad{j}(B_{L});G\big)
    $$
    is the same map as $\jmath^{*}: H_{Sk}^{i,j,f_{\text{min}} (B_{L},j)}(L;G) \to H_{Kh}^{i,j}(L;G)$ induced from the embedding $\jmath:  A \times I \hookrightarrow S^{3}$, for $G= \mathbb{Z}_{2}$ or $\mathbb{Z}$.
\end{mythm}
We further showed that at the quantum grading $j=sl(L)$, the extreme spectrum is the sphere spectrum, $\skspecgrad{sl(L)}{-b(B_{L})}(B_{L})= \mathbb{S}$, and we recover the cohomotopy Plamenevskaya invariant $\Psi^{sl(L)}(B_{L}): \khspecgrad{sl(L)}(B_{L}) \to \mathbb{S}$ defined by Lipshitz, Ng, and Sarkar \cite{LipSarTransverse}. This is a refinement of the original Plamenvskaya invariant \cite{Olga2006}. Lipshitz, Ng, and Sarkar \cite{LipSarTransverse} showed that this map $\Psi^{sl(L)}(B_{L}) \in \pi_{s}^{0}(\khspecgrad{sl(L)}(L))$ is a transverse invariant. \par
\begin{mycorollary}{4.4}
    For a closed braid diagram $B_{L}$ representing a transverse link $L$, the extreme Khovanov skein spectrum $\skspecgrad{j_{min}}{f_{\text{min}}(B_{L})}(B_{L}) = \mathbb{S}$ is the sphere spectrum.
\end{mycorollary}
The term extreme Alexander grading is a reference to the link surgery spectral sequence in \cite{Rob2013}, under which the homotopical grading corresponds to the additional Alexander grading in the knot Floer homology induced by the binding which is the braid axis. 
\section{Resolution Configuration, Skein Homology, Transverse Invariant}\label{section 2}
\subsection{Resolution configurations in annulus and Khovanov skein homology}
In this section we introduce the notion of resolution configuration in the annulus, following the framework of Lipshitz and Sarkar's \cite{LipSar2014} resolution configuration in $S^2$. The key difference is that, in our setting, the circles are embedded in an annulus rather than in $S^2$. This distinction introduces additional structure, including an extra grading known as the homotopical grading, which is related to the Alexander grading in the context of knot Floer homology.

\begin{definition}
    A \textit{resolution configuration $D$ in annulus $A$} is a pair $(Z(D), R(D))$, where $Z(D)$ is a collection of pairwise disjoint circles embedded in $A$, and $R(D)$ is an ordered collection of pairwise disjoint arcs embedded in $A$ such that each arc $R\in R(D)$ has its boundary $\partial R\subset \bigcup_{Z\in Z(D)}Z$. An arc $R$ is called an \textit{$m$-arc} if the two points in $\partial R$ lie in two different circles and a \textit{$\Delta$-arc} if both points lie in the same circle. We call a circle $Z\in Z(D)$, a \textit{trivial} circle if $Z$ bounds a disk in $A$, otherwise, we call it a \textit{non-trivial} circle.
\end{definition}
\begin{definition}[\cite{LipSar2014}, Definition 2.3]
    Given two resolution configurations $D$ and $E$ we can define another resolution configuration $D\backslash E$ by declaring $Z(D\backslash E)= Z(D)\backslash Z(E)$ and $R(D\backslash E)= \{R\in R(D)\,|\, \partial R\cap Z =\emptyset,\forall Z \in Z(E)\}$. We define $D\cap E$ to be $D\backslash (D\backslash E)$.
\end{definition}
\begin{definition}
From a resolution configuration $D$ and a subcollection of arcs $R'(D)\subseteq R(D)$, we get another resolution configuration $s_{R'(D)}(D)$ by doing surgery along all the arcs in $R'(D)$, see Figure \ref{fig:resolution_configuration_and_dual} and also refer to \cite[Definition 2.5]{LipSar2014} for the $S^2$ version. Given a resolution configuration $D$ in $A$, we define the dual resolution configuration $D^{*}$ by declaring $Z(D^{*})= Z\big(s_{R(D)}(D)\big)$, for each arc $R_{i}\in R(D)$, there is a dual arc $R_{|R(D)|-i+1}^{*}\in R(D^{*})$; see Figure \ref{fig:resolution_configuration_and_dual}. For simplicity, we will write $s(D)$ to denote $s_{R(D)}(D)$ from here on. 
\end{definition}
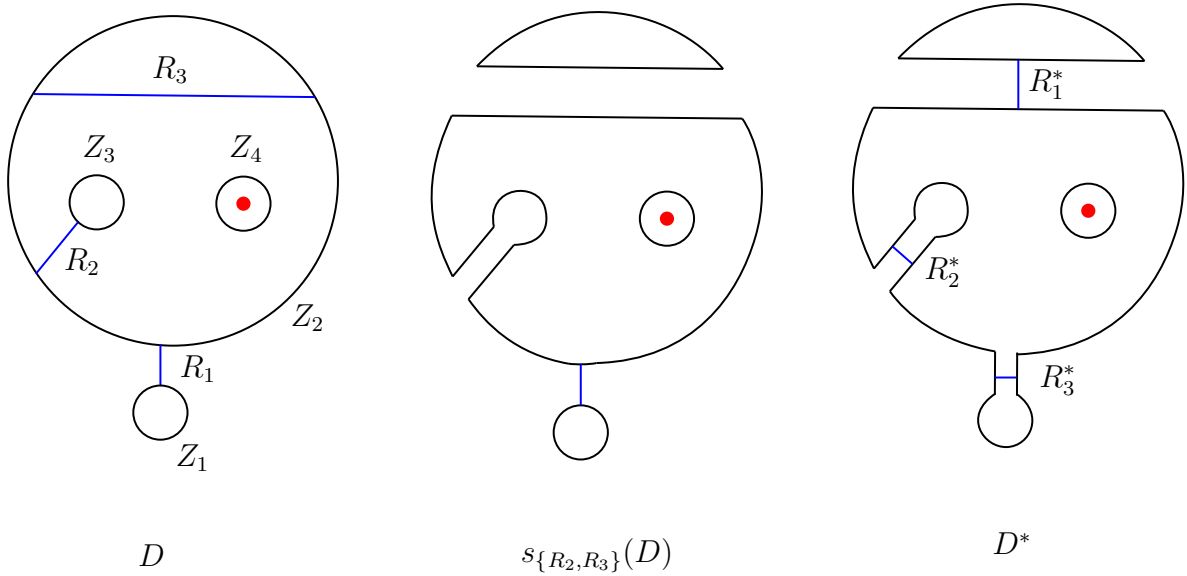
\begin{figure}[htp]
    \centering
    \tikzset{every picture/.style={line width=0.75pt}} 

\begin{tikzpicture}[x=0.75pt,y=0.75pt,yscale=-1,xscale=1]

\draw   (33,143.5) .. controls (33,97.57) and (70.23,60.33) .. (116.17,60.33) .. controls (162.1,60.33) and (199.33,97.57) .. (199.33,143.5) .. controls (199.33,189.43) and (162.1,226.67) .. (116.17,226.67) .. controls (70.23,226.67) and (33,189.43) .. (33,143.5) -- cycle ;
\draw   (64,154.33) .. controls (64,146.79) and (70.12,140.67) .. (77.67,140.67) .. controls (85.21,140.67) and (91.33,146.79) .. (91.33,154.33) .. controls (91.33,161.88) and (85.21,168) .. (77.67,168) .. controls (70.12,168) and (64,161.88) .. (64,154.33) -- cycle ;
\draw   (138,155) .. controls (138,147.45) and (144.12,141.33) .. (151.67,141.33) .. controls (159.21,141.33) and (165.33,147.45) .. (165.33,155) .. controls (165.33,162.55) and (159.21,168.67) .. (151.67,168.67) .. controls (144.12,168.67) and (138,162.55) .. (138,155) -- cycle ;
\draw [color=blue  ,draw opacity=1 ]   (46,99.67) -- (187.8,101.2) ;
\draw [color=blue  ,draw opacity=1 ]   (68.2,164.4) -- (47,190.4) ;
\draw [color=blue  ,draw opacity=1 ]   (109.8,226) -- (109.8,246.8) ;
\draw   (96.13,260.47) .. controls (96.13,252.92) and (102.25,246.8) .. (109.8,246.8) .. controls (117.35,246.8) and (123.47,252.92) .. (123.47,260.47) .. controls (123.47,268.01) and (117.35,274.13) .. (109.8,274.13) .. controls (102.25,274.13) and (96.13,268.01) .. (96.13,260.47) -- cycle ;
\draw  [color={rgb, 255:red, 252; green, 3; blue, 3 }  ,draw opacity=1 ][fill={rgb, 255:red, 252; green, 3; blue, 3 }  ,fill opacity=1 ][line width=0.75]  (154.74,155) .. controls (154.74,153.3) and (153.37,151.92) .. (151.67,151.92) .. controls (149.97,151.92) and (148.59,153.3) .. (148.59,155) .. controls (148.59,156.7) and (149.97,158.08) .. (151.67,158.08) .. controls (153.37,158.08) and (154.74,156.7) .. (154.74,155) -- cycle ;
\draw   (351.6,162.57) .. controls (351.6,155.02) and (357.72,148.9) .. (365.27,148.9) .. controls (372.81,148.9) and (378.93,155.02) .. (378.93,162.57) .. controls (378.93,170.11) and (372.81,176.23) .. (365.27,176.23) .. controls (357.72,176.23) and (351.6,170.11) .. (351.6,162.57) -- cycle ;
\draw    (256.71,110.57) -- (403.22,112.06) ;
\draw  [color={rgb, 255:red, 252; green, 3; blue, 3 }  ,draw opacity=1 ][fill={rgb, 255:red, 252; green, 3; blue, 3 }  ,fill opacity=1 ][line width=0.75]  (368.34,162.57) .. controls (368.34,160.87) and (366.97,159.49) .. (365.27,159.49) .. controls (363.57,159.49) and (362.19,160.87) .. (362.19,162.57) .. controls (362.19,164.27) and (363.57,165.64) .. (365.27,165.64) .. controls (366.97,165.64) and (368.34,164.27) .. (368.34,162.57) -- cycle ;
\draw    (269.39,85.67) -- (393.61,87) ;
\draw    (256.71,110.57) .. controls (237.73,150.5) and (249.91,176.68) .. (257.18,191.77) ;
\draw    (269.39,85.67) .. controls (310.32,42.82) and (367.41,55.73) .. (393.61,87) ;
\draw    (277.98,166.69) -- (257.18,191.77) ;
\draw    (288.16,175.78) -- (265.18,203.23) ;
\draw    (277.98,166.69) .. controls (273.91,145.59) and (300.27,143.59) .. (303.73,157.05) .. controls (307.18,170.5) and (298.23,175.71) .. (288.16,175.78) ;
\draw    (265.18,203.23) .. controls (273.67,215.25) and (288.38,230.98) .. (315.8,235.8) ;
\draw    (403.22,112.06) .. controls (408.58,119.4) and (418.33,143.08) .. (409.83,171.92) .. controls (401.33,200.75) and (379.34,233.95) .. (329.75,235.44) ;
\draw   (564.1,158.57) .. controls (564.1,151.02) and (570.22,144.9) .. (577.77,144.9) .. controls (585.31,144.9) and (591.43,151.02) .. (591.43,158.57) .. controls (591.43,166.11) and (585.31,172.23) .. (577.77,172.23) .. controls (570.22,172.23) and (564.1,166.11) .. (564.1,158.57) -- cycle ;
\draw    (469.21,106.57) -- (615.72,108.06) ;
\draw  [color={rgb, 255:red, 252; green, 3; blue, 3 }  ,draw opacity=1 ][fill={rgb, 255:red, 252; green, 3; blue, 3 }  ,fill opacity=1 ][line width=0.75]  (580.84,158.57) .. controls (580.84,156.87) and (579.47,155.49) .. (577.77,155.49) .. controls (576.07,155.49) and (574.69,156.87) .. (574.69,158.57) .. controls (574.69,160.27) and (576.07,161.64) .. (577.77,161.64) .. controls (579.47,161.64) and (580.84,160.27) .. (580.84,158.57) -- cycle ;
\draw    (481.89,81.67) -- (606.11,83) ;
\draw    (469.21,106.57) .. controls (450.23,146.5) and (462.41,172.68) .. (469.68,187.77) ;
\draw    (481.89,81.67) .. controls (522.82,38.82) and (579.91,51.73) .. (606.11,83) ;
\draw    (490.48,162.69) -- (469.68,187.77) ;
\draw    (500.66,171.78) -- (477.68,199.23) ;
\draw    (490.48,162.69) .. controls (486.41,141.59) and (512.77,139.59) .. (516.23,153.05) .. controls (519.68,166.5) and (510.73,171.71) .. (500.66,171.78) ;
\draw    (530.76,229.57) -- (530.64,251.36) ;
\draw    (541.9,230.14) -- (541.79,251.93) ;
\draw    (530.94,250.86) .. controls (513.9,262.15) and (525,278.35) .. (536.8,277.85) .. controls (548.6,277.35) and (555.8,261.55) .. (542.09,251.43) ;
\draw    (477.68,199.23) .. controls (486.17,211.25) and (503.33,224.75) .. (530.76,229.57) ;
\draw    (615.72,108.06) .. controls (621.08,115.4) and (630.83,139.08) .. (622.33,167.92) .. controls (613.83,196.75) and (591.09,229.59) .. (541.5,231.08) ;
\draw [color=blue  ,draw opacity=1 ]   (530.5,242.75) -- (541.33,242.75) ;
\draw [color=blue  ,draw opacity=1 ]   (478.83,176.42) -- (489.17,185.51) ;
\draw [color=blue  ,draw opacity=1 ]   (542.47,82.33) -- (542.47,107.31) ;
\draw    (315.8,235.8) .. controls (318.88,236.19) and (325.75,236.31) .. (329.75,235.44) ;
\draw   (308.13,270.47) .. controls (308.13,262.92) and (314.25,256.8) .. (321.8,256.8) .. controls (329.35,256.8) and (335.47,262.92) .. (335.47,270.47) .. controls (335.47,278.01) and (329.35,284.13) .. (321.8,284.13) .. controls (314.25,284.13) and (308.13,278.01) .. (308.13,270.47) -- cycle ;
\draw [color=blue  ,draw opacity=1 ]   (321.8,236) -- (321.8,256.8) ;

\draw (118.17,230.07) node [anchor=north west][inner sep=0.75pt]    {$R_{1}$};
\draw (59.67,176.07) node [anchor=north west][inner sep=0.75pt]    {$R_{2}$};
\draw (104.07,79.17) node [anchor=north west][inner sep=0.75pt]    {$R_{3}$};
\draw (116,274.9) node [anchor=north west][inner sep=0.75pt]    {$Z_{1}$};
\draw (174,204.4) node [anchor=north west][inner sep=0.75pt]    {$Z_{2}$};
\draw (69,118.9) node [anchor=north west][inner sep=0.75pt]    {$Z_{3}$};
\draw (143,118.9) node [anchor=north west][inner sep=0.75pt]    {$Z_{4}$};
\draw (551.67,235.07) node [anchor=north west][inner sep=0.75pt]    {$R_{3}^{*}$};
\draw (493.67,180.57) node [anchor=north west][inner sep=0.75pt]    {$R_{2}^{*}$};
\draw (546,85.73) node [anchor=north west][inner sep=0.75pt]    {$R_{1}^{*}$};
\draw (97,325.4) node [anchor=north west][inner sep=0.75pt]    {$D$};
\draw (290,322.4) node [anchor=north west][inner sep=0.75pt]    {$s_{\{}{}_{R_{2} ,R_{3}\}}( D)$};
\draw (528,319.4) node [anchor=north west][inner sep=0.75pt]    {$D^{*}$};

\end{tikzpicture}
    \caption{Some notations for resolution configuration. $D$ is the initial resolution configuration; $s_{\{R_2,R_3\}}(D)$ is the resolution configuration after surgery along arcs $R_2$ and $R_3$; $D^*$ is the dual resolution configuration.}
    \label{fig:resolution_configuration_and_dual}
\end{figure}

\begin{definition}
A \textit{labeled resolution configuration} $(D,x)$ is a resolution configuration $D$ in $A$ with a labeling $x$ on each circle in $Z(D)$ as follows. Each non-trivial circle is labeled by an element in $\{v_{+},v_{-}\}$, and each trivial circle is labeled by an element in $\{w_{+},w_{-}\}$. Given two labeled resolution configurations $(D,y)$ and $(E,x)$, we declare that $(D,y)\prec' (E,x)$ if the following holds:
\begin{enumerate}
    \item The induced labelings by $x$ and $y$ on $D\cap E$ are the same.
    \item $E$ is obtained from $D$ by surgery along an $m$-arc. So $D\backslash E$ contains two circles, say $Z_{i}$ and $Z_{j}$, and $s(D\backslash E)= E\backslash D$ contains exactly one circle, say $Z_{k}$. There are three possibilities up to switching the role of $Z_{i}$ and $Z_{j}$:
        \begin{enumerate}
            \item \label{2a} The circles $Z_{i}$, $Z_{j}$ and $Z_{k}$ are all trivial circles.
            \item \label{2b} One of the circle in $\{Z_{i}, Z_{j}\}$ is trivial circle and the other circle is nontrivial. Without loss of generality, we assume that $Z_{i}$ is trivial, and $Z_{j}$ is nontrivial. In this case, $Z_{k}$ has to be nontrivial.
            \item \label{2c} Both circles $Z_{i}$ and $Z_{j}$ are nontrivial but then $Z_{k}$ has to be trivial. 
        \end{enumerate}
    \item $E$ is obtained from $D$ by surgery along a $\Delta$-arc. So $D\backslash E$ contains exactly circle, say $Z_{i}$, and $s(D\backslash E)= E\backslash D$ contains exactly two circles, say $Z_{j}$, and $Z_{k}$. There are three possibilities up to switching the role of $Z_{j}$ and $Z_{k}$:
    \begin{enumerate}
        \item \label{3a} The circles $Z_{i}$, $Z_{j}$ and $Z_{k}$ are all trivial circles.
        \item \label{3b} The circle $Z_{i}$ is nontrivial and one of the circle in $\{Z_{j}, Z_{k}\}$ is trivial and the other circle is nontrivial. Without loss of generality we assume that, the circle $Z_{j}$ is trivial and the circle $Z_{k}$ is nontrivial. 
        \item \label{3c}The circle $Z_{i}$ is trivial and the both circles in $\{Z_{j}, Z_{k}\}$ are nontrivial.
    \end{enumerate}
    \item In Case \ref{2a}, either $y(Z_{i})=y(Z_{j})=x(Z_{k})=w_{+}$, or $\{y(Z_{i}),y(Z_{j})\}=\{w_{+},w_{-}\}$ and $x(Z_{k})=w_{-}$. In Case \ref{2b}, either $y(Z_{i})=w_{+}$, and $y(Z_{j})= v_{+}$, and $x(Z_{k})=v_{+}$ or, $y(Z_{i})=w_{+}$, and $y(Z_{j})= v_{-}$, and $x(Z_{k})=v_{-}$. In Case \ref{2c}, $\{y(Z_{i}),y(Z_{j})\}=\{v_{+},v_{-}\}$ and $x(Z_{k})= w_{-}$.\par
    In Case \ref{3a}, either $y(Z_{i})=x(Z_{j})=x(Z_{k})=w_{-}$, or $y(Z_{i})=w_{+}$, and $\{x(Z_{j}),x(Z_{k})\}=\{w_{+},w_{-}\}$. In Case \ref{3b},  either $y(Z_{i})=x(Z_{k})= v_{+}$, and $x(Z_{j})=w_{-}$, or $y(Z_{i})= x(Z_{k})= v_{-}$, and $x(Z_{j})=w_{-}$. In Case \ref{3c}, $y(Z_{i})= w_{+}$ and $\{x(Z_{j}),x(Z_{k})\}=\{v_{+},v_{-}\}$.
\end{enumerate}
Now we define $\prec'$ to be the transitive closure of the above relation.
\end{definition}

\begin{definition}
A \textit{decorated resolution configuration in $A$ } is a triple $(D,x,y)$ where $(D,y)$ and $(s
(D),x)$ are labeled resolution configurations in $A$ satisfying: $(D,y)\prec' (s(D),x)$; see also \cite[Definition 2.11]{LipSar2014}. Associated to each decorated resolution configuration $(D,x,y)$, there is a poset $P(D,x,y)$ consisting of all labeled resolution configurations $(E,z)$ satisfying: $(D,y)\preceq' (E,z) \preceq' (s(D),x)$. 
\end{definition}
\begin{definition}[\cite{LipSar2014}, Definition 2.12]
Given a decorated resolution configuration $(D,x,y)$, we have a dual \textit{decorated resolution configuration}$(D^{*},y^{*},x^{*})$, where $D^{*}$ is dual of $D$ and $x^{*}$ is the labeling on $D^{*}$ dual to $x$, i.e. on each circle in $Z(D^{*})=Z(s(D))$, the labeling $x$ and $x^{*}$ are different, similarly $y$ and $y^{*}$ are dual labelings on circles in $Z(D)= Z(s(D^{*}))$. The poset $P(D^{*},y^{*},x^{*})$ is the reverse of the poset $P(D,x,y)$; see \cite[Lemma 2.13]{LipSar2014}. 
\end{definition}
The Khovanov skein homology of an oriented link $L\subset A\times I$ was originally defined by M. M. Asaeda, J. H. Przytycki, and A. S. Sikora  \cite{AsaedaPrzySikora2004} but we will use the following version defined by L. Roberts in \cite{Rob2013} as this definition directly relates to Khovanov homology.

\begin{definition}
Let $D_{L}$ be a link diagram in $A$ corresponding to an oriented link $L\subset A\times I$. We choose an ordering $1,\dots,n(D_{L})$ of the crossings of $D_{L}$, where $n(D_{L})$ denotes the total number of crossings in $D_{L}$. Let $n_{-}(D_{L})$ (resp. $n_{+}(D_{L})$) denote the number of negative crossings (resp. positive crossings) in $D_{L}$. For each state $\alpha \in \{0,1\}^{n(D_{L})}$, we assign a resolution configuration $D_{L}(\alpha)$ by resolving the $i$-th crossing by $0$-resolution (resp. $1$-resolution) if $\alpha_{i}=0$ (resp. $\alpha_{i}=1$), and then we place an arc at the $i$-th crossing if $\alpha_{i}=0$; see Figure \ref{fig:resolution_configuration_for_state}.
\begin{figure}[htp]
    \centering
    \tikzset{every picture/.style={line width=0.75pt}} 

\begin{tikzpicture}[x=0.75pt,y=0.75pt,yscale=-1,xscale=1]

\draw    (170.56,60.39) -- (191.22,82.17) ;
\draw    (168.33,84.39) -- (177.67,73.94) ;
\draw    (183,68) -- (190.33,58.61) ;
\draw    (168.33,84.39) -- (190.33,105.28) ;
\draw    (181.89,92.39) -- (191.22,82.17) ;
\draw    (167.67,104.83) -- (175.67,97.72) ;
\draw    (167.67,104.83) -- (187.22,124.61) ;
\draw    (190.33,105.28) -- (181.89,114.17) ;
\draw    (175.89,118.83) -- (168.78,124.61) ;
\draw    (170.56,60.39) .. controls (171,1.28) and (246.78,11.28) .. (243.89,59.94) ;
\draw    (190.33,58.61) .. controls (191,30.75) and (223.67,35.5) .. (224.56,57.72) ;
\draw    (224.56,57.72) -- (225.44,126.17) ;
\draw    (243.89,59.94) -- (245,126.61) ;
\draw    (187.22,124.61) .. controls (188.56,152.17) and (224.78,150.17) .. (225.44,126.17) ;
\draw    (168.78,124.61) .. controls (167,163.75) and (239,188.25) .. (245,126.61) ;
\draw  [color={rgb, 255:red, 252; green, 3; blue, 3 }  ,draw opacity=1 ][fill={rgb, 255:red, 252; green, 3; blue, 3 }  ,fill opacity=1 ][line width=0.75]  (209.24,94.5) .. controls (209.24,92.8) and (207.87,91.42) .. (206.17,91.42) .. controls (204.47,91.42) and (203.09,92.8) .. (203.09,94.5) .. controls (203.09,96.2) and (204.47,97.58) .. (206.17,97.58) .. controls (207.87,97.58) and (209.24,96.2) .. (209.24,94.5) -- cycle ;
\draw    (410.56,56.89) -- (410,79.75) ;
\draw    (427,102.47) -- (427.22,121.11) ;
\draw    (409.4,102.07) -- (408.78,121.11) ;
\draw    (410.56,56.89) .. controls (411,-2.22) and (486.78,7.78) .. (483.89,56.44) ;
\draw    (430.33,55.11) .. controls (431,27.25) and (463.67,32) .. (464.56,54.22) ;
\draw    (464.56,54.22) -- (465.44,122.67) ;
\draw    (483.89,56.44) -- (485,123.11) ;
\draw    (427.22,121.11) .. controls (428.56,148.67) and (464.78,146.67) .. (465.44,122.67) ;
\draw    (408.78,121.11) .. controls (407,160.25) and (479,184.75) .. (485,123.11) ;
\draw  [color={rgb, 255:red, 252; green, 3; blue, 3 }  ,draw opacity=1 ][fill={rgb, 255:red, 252; green, 3; blue, 3 }  ,fill opacity=1 ][line width=0.75]  (449.24,91) .. controls (449.24,89.3) and (447.87,87.92) .. (446.17,87.92) .. controls (444.47,87.92) and (443.09,89.3) .. (443.09,91) .. controls (443.09,92.7) and (444.47,94.08) .. (446.17,94.08) .. controls (447.87,94.08) and (449.24,92.7) .. (449.24,91) -- cycle ;
\draw    (410,79.75) .. controls (407,88.88) and (429.75,91.63) .. (429.5,80.75) ;
\draw    (409.4,102.07) .. controls (409.25,93.88) and (427.75,95.13) .. (427,102.47) ;
\draw    (430.33,55.11) -- (429.5,80.75) ;
\draw [color=blue  ,draw opacity=1 ]   (419.08,87.75) -- (419,96.69) ;

\draw (149,61.4) node [anchor=north west][inner sep=0.75pt]    {$1$};
\draw (149,85.4) node [anchor=north west][inner sep=0.75pt]    {$2$};
\draw (148.5,108.9) node [anchor=north west][inner sep=0.75pt]    {$3$};
\draw (199.4,173) node [anchor=north west][inner sep=0.75pt]    {$D_{L}$};
\draw (403.87,171.63) node [anchor=north west][inner sep=0.75pt]    {$D_{L}(( 1,0,1))$};
\draw (389,57.9) node [anchor=north west][inner sep=0.75pt]    {$1$};
\draw (389.67,82.83) node [anchor=north west][inner sep=0.75pt]    {$2$};
\draw (388.5,105.4) node [anchor=north west][inner sep=0.75pt]    {$3$};

\end{tikzpicture}
    \caption{Resolution configurations for a knot diagram.}
    \label{fig:resolution_configuration_for_state}
\end{figure}
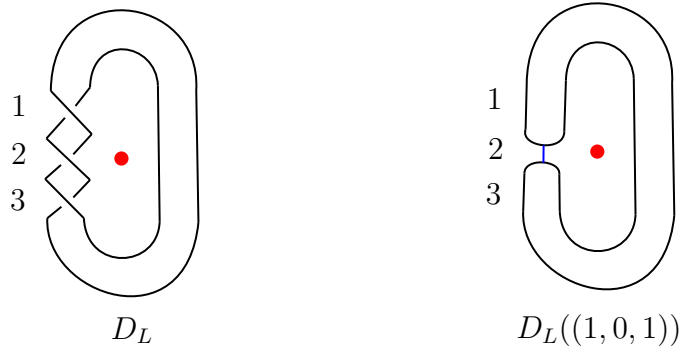
The (Manhattan) norm $ \sum_{i=1}^{n(D_{L})}\alpha_{i}$ is denoted by $|\alpha|$. Corresponding to each labeled resolution configuration $(D_{L}(\alpha),y)$ in $A$, we call the tuple $(\alpha,y)$ as \textit{enhanced state of $D_{L}$ in $A$ }. For each $(D_{L}(\alpha),y)$, we define three gradings as follows:
\begin{align*}
    \hgr((D_{L}(\alpha),y)) &= -n_{-} + |\alpha|, \\
    \qgr((D_{L}(\alpha),y)) &= n_{+}-2n_{-} + |\alpha| + \big|\{Z\in Z(D_{L}(\alpha))\,|\, y(Z)= v_{+},\,\text{or}\,\, y(Z)=w_{+}\}\big|\\
         & \qquad \qquad \qquad \qquad - \big|\{Z\in Z(D_{L}(\alpha))\,|\, y(Z)= v_{-},\,\text{or}\,\, y(Z)=w_{-}\}\big|,\\
    \fgr((D_{L}(\alpha),y)) &= \big|\{Z\in Z(D_{L}(\alpha))\,|\, y(Z)= v_{+}\}\big| 
                            - \big|\{Z\in Z(D_{L}(\alpha))\,|\, y(Z)= v_{-}\}\big|.
\end{align*}
The gradings $\hgr$, $\qgr$ and $\fgr$ are called \textit{homological grading} (or sometimes $h$-grading), \textit{quantum grading} (or sometimes $q$-grading) and \textit{homotopical grading} (or sometimes $f$-grading) respectively. 
\end{definition}
The cochain group $C_{Sk}(D_{L})$ is defined to be the $\mathbb{Z}$-module freely generated by labeled resolution configurations $(D_{L}(\alpha),y)$ in $A$. The boundary maps $\partial_{Sk}$ are defined in the following way. 
\begin{align*}
    \partial_{Sk}\big((D_{L}(\alpha),y)\big)= \sum\limits_{\substack{(D_{L}(\beta),x)\\ |\beta|=|\alpha|+1 \\ (D_{L}(\alpha),y)\prec' (D_{L}(\beta),x)}}  (-1)^{s_{0}(\mathcal{C}_{\beta,\alpha})} (D_{L}(\beta),x),
\end{align*}
where $s_{0}(\mathcal{C}_{\beta,\alpha}) \in \mathbb{Z}_{2}$ is called the \textit{standard sign assignment} which is defined as follows. Since $s(D_{L}(\alpha),y)\prec' (D_{L}(\beta),x)$ and $|\beta|=|\alpha|+1$, there is a unique $i\in \{1,\dots,n(D_{L})\}$ such that $\alpha_{i}=0$ but $\beta_{i}=1$. We declare $s_{0}(\mathcal{C}_{\beta,\alpha}) = \overline{\sum_{l=1}^{i-1}\alpha_{l}} \in \mathbb{Z}_{2}$.
Observe that the boundary maps preserve the quantum grading and homotopical grading but increase the homological grading by 1. It is known that $\partial_{Sk}\circ \partial_{Sk}=0$ and thus we get a tri-graded cohomology $H^{i,j,k}_{Sk}(L)$ of the complex $(C_{Sk}(D_{L}),\partial_{Sk})$, where $i$, $j$  and $k$ are homological, quantum, and homotopical grading respectively. The tri-graded cohomology $H^{i,j,k}_{Sk}(L)$ is invariant of the oriented link $L\subset A\times I$; see \cite[Theorem 2.1]{Rob2013}, and \cite[Theorem 2.1]{Yi-Xie-instantons-annular-khovanov}. We can similarly define a tri-graded cohomology $H^{i,j,k}_{Sk}(L;\mathbb{Z}_{2})$ with $\mathbb{Z}_{2}$ coefficient of the cochain complex $(\overline{C_{Sk}(D_{L})},\overline{\partial_{Sk}};\mathbb{Z}_{2})$ in a similar way, where the cochain group $\overline{C_{Sk}(D_{L})}$ is defined to be the $\mathbb{Z}_{2}$-vector space generated by the labeled resolution configurations $(D_{L}(\alpha),y)$, and the boundary $\overline{\partial_{Sk}}$ is defined as follows:
\begin{align*}
    \overline{\partial_{Sk}}\big((D_{L}(\alpha),y)\big)= \sum\limits_{\substack{(D_{L}(\beta),x)\\ |\beta|=|\alpha|+1 \\ (D_{L}(\alpha),y)\prec' (D_{L}(\beta),x)}}  (D_{L}(\beta),x).
\end{align*}
Observe that the tri-graded cohomology $H^{i,j,k}_{Sk}(L;\mathbb{Z}_{2})$ is the same as the \textit{Khovanov Skein homology} described in \cite[Section 2]{Rob2013}.\par
For an oriented link diagram $D_{L}\subset A$, and any group $G$, we define the following:
\begin{align*}
    &f_{\text{min}}(D_{L},j)= \min \big\{\fgr((D_{L}(\alpha),y))\,|\, (D_{L}(\alpha),y) \in C_{Sk}(D_{L})\,:\, \qgr((D_{L}(\alpha),y))=j\big\},\\
    &f_{\text{min}}^{H}(D_{L},j;G)=\min\big\{\fgr((D_{L}(\alpha),y))\,|\, \big[(D_{L}(\alpha),y)\big] \neq 0 \in H_{Sk}(D_{L};G)\,:\, \qgr((D_{L}(\alpha),y))=j\big\},\\
    &f_{\text{min}}(D_{L}) = \min\big\{\fgr((D_{L}(\alpha),y))\,|\, (D_{L}(\alpha),y) \in C_{Sk}(D_{L})\big\},\\
    &f_{\text{min}}^{H}(D_{L};G)=\min \big\{\fgr((D_{L}(\alpha),y))\,|\, \big[(D_{L}(\alpha),y)\big] \neq 0 \in H_{Sk}(D_{L};G)\big\},\\
    &S_{j,k}(D_{L})= \big\{(D_{L}(\alpha),y) \in C_{Sk}(D_{L})\,:\, \qgr((D_{L}(\alpha),y))=j,\,\text{and}\,\,\fgr((D_{L}(\alpha),y))=k \big\},\\
    &S_{k}(D_{L})= \big\{(D_{L}(\alpha),y) \in C_{Sk}(D_{L})\,:\, \fgr((D_{L}(\alpha),y))=k \big\}.
\end{align*}
\begin{remark}
    Since $H_{Sk}^{i,j,k}(L)$ is an invariant of oriented link $L\subset A\times I$, if two link diagrams $D_{L}$ and $D_{L}'$ in $A$ are related by allowable Reidemeister Moves $\Rom{1}$, $\Rom{2}$, and $\Rom{3}$ in annulus, then for each quantum grading $j$, $f_{\text{min}}^{H}(D_{L},j;G)= f_{\text{min}}^{H}(D_{L}',j;G)$, and $f_{\text{min}}^{H}(D_{L};G)= f_{\text{min}}^{H}(D_{L}';G)$. 
\end{remark}
Notice that we define the Khovanov skein homology in the same spirit Robert Lipshitz, and Sucharit Sarkar defined Khovanov cochain complex $(C_{Kh}^{i,j}(D_{L}), \partial_{Kh})$ for an oriented link diagram $D_{L}\subset S^{2}$ representing an oriented link $L$ in $S^{3}$ in \cite[Definition 2.15]{LipSar2014}, where $i$ and $j$ denote the homological and quantum grading respectively. In \cite{LipSar2014}, Robert Lipshitz, and Sucharit Sarkar used a partial order $\prec$ on the labeled resolution configurations in $S^{2}$ to define the boundary maps. Associated to a state $\alpha$, we get labelled resolution configurations of the form $(D_{L}(\alpha),y)$, where $D_{L}(\alpha)$ is a resolution configuration in $S^{2}$, with a labeling $y$ on the circles in $Z(D_{L}(\alpha))$ where on each circle $y$ is either $x_{+}$, or $x_{-}$; see \cite[Section 2]{LipSar2014}. We then define the bigraded Khovanov homology $H_{Kh}^{i,j}(L)$ of $L$ as the homology of the cochain complex $C_{Kh}^{h,q}(D_{L})$.

\subsection{Transverse invariant in Khovanov Skein homology} Consider $S^3$ with standard contact structure $\xi_{std}=\ker(dz-ydx)$. A link in $S^{3}$ is called a \textit{transverse link} if it is everywhere transverse to the standard contact planes. By a theorem of D. Bennequin \cite{Benn1983}, any transverse link in $(S^{3}, \xi_{sym}= \ker(dz-ydx+xdy)) $ is transversely isotopic to a closed braid. Moreover, by the transverse Markov theorem two closed braids $L$ and $L'$ represent transversely isotopic links if and only if $L$ and $L'$ are related by braid conjugation, braid isotopies, and positive braid stabilization and destabilization; see \cite{Orevkov-Shevchishin-transverse-markov}.\par
Given a transverse link $L$, Olga Plamenevskaya \cite{Olga2006} introduced a transverse invariant $\psi_{Kh}(L)$ defined as a distinguished element in Khovanov homology, $H_{Kh}(L)$. Let $D_{L}$ be a closed braid diagram which represents $L$ with $b$ strands. Moreover, let $n(D_{L})$, $n_{+}(D_{L})$, and $n_{-}(D_{L})$ denote the total number of crossings, the number of positive crossings, and the number of negative crossings in $D_{L}$ respectively. Then the \textit{self-linking number} of $L$ is given by 
$$
sl(L)=-b + n_{+}(D_{L}) - n_{-}(D_{L}).
$$ 
The \textit{oriented resolution} $D_{L}^{o}= D_{L}(\alpha^{o})$ as described in \cite{LipSarTransverse, Olga2006, Olga2006} corresponds to the state $\alpha^{o}\in \{0,1\}^{n(D_{L})}$, where $\alpha^{o}_{i}=1$ if $i$-th crossing is a negative crossing and $\alpha^{o}_{i}=0$ otherwise. Note that the number of circles $|Z(D_{L}^{o})|=b$. 

Consider the labeled resolution configuration $(D_{L}^{o},x^{o})$ where each circle in $Z(D_{L}^{o})$ is assigned the label $x_{-}$. This defines a generator $\widehat{\psi}_{Kh}(L)\in C_{Kh}^{0,sl(L)}(D_{L})$, which is a cocycle and thus corresponds to an element $\psi_{Kh}(L)\in H^{0,sl(L)}_{Kh}(L)$. Plamenevskaya's work \cite[Theorem 2]{Olga2006} shows that up to a sign, $\psi_{Kh}(L)$ is a transverse link invariant.\par 
This transverse invariant can also be viewed as an element $\widetilde{\psi}(L)$ in the reduced Khovanov homology $\widetilde{H}_{Kh}(L)$; see \cite[Section 2]{Olga2006_2}. The reduced Khovanov homology $\widetilde{H}_{Kh}^{i,j}(L)$ in $\mathbb{Z}$-coefficient for an oriented link $L$ with a marked point is the bi-graded homology of the reduced Khovanov cochain complex of $\widetilde{C}_{Kh}^{i,j}(D_{L})$, where $D_{L}$ is a link diagram representing $L$ with the marked point. The reduced Khovanov cochain complex $\widetilde{C}_{Kh}(D_{L})$ is defined in the following way. There is a sub-cochain complex $C_{Kh}^{x_{-}}(D_{L})$ of the Khovanov cochain complex defined as the $\mathbb{Z}$-module freely generated by those enhanced states of $D_{L}$ in $S^{2}$ where the marked circles are labeled with $x_{-}$. Now we define $\widetilde{C}_{Kh}(D_{L})$ to be $C_{Kh}(D_{L})/C_{Kh}^{x_{-}}(D_{L})$. Note that there is an obvious isomorphism $\widetilde{C}_{Kh}(D_{L})\cong C_{Kh}^{x_{-}}(D_{L})$, which corresponds an enhanced state of $D_{L}$ in $S^{2}$ where the marked circle is labeled with $x_{+}$ to an enhanced state of $D_{L}$ in $S^{2}$ with marked circle labeled with $x_{-}$, by switching the labeling only on the marked circle. Note that $\widehat{\psi}(L)\in C_{Kh}^{x_{-}}(D_{L})$, but under the above isomorphism we can view $\widehat{\psi}(L)$ as an element of $\widetilde{C}_{Kh}(D_{L})$. This gives rise to a version of the transverse invariant $\widetilde{\psi}(L)\in \widetilde{H}_{Kh}(L)$; see \cite[Section 2]{Olga2006_2}.\par
The closed braid diagram $D_{L}$ representing the transverse link $L$ can be viewed as a closed braid diagram embedded in an annulus $A$. Thus the oriented resolution $D_{L}^{o}$ can be thought of as a resolution configuration in $A$ where all circles in $Z(D_{L}^{o})$ are non-trivial. 

Now, consider the labeled resolution configuration $(D_{L}^{o},x^{o}_{Sk})$ where each circle in $Z(D_L^o)$ is assigned the label $v_-$. This defines an element $\widehat{\psi}_{Sk}(D_{L})\in C_{Sk}^{0,sl(L),-b}(D_{L})$. 
It is straightforward to verify that $\widehat{\psi}_{Sk}(D_{L})$ is a closed element in $C_{Sk}(D_{L})$, representing a homology class $\psi_{Sk}(D_{L})$ with $\fgr(\psi_{Sk}(D_{L}))= f_{\text{min}}(D_{L})$. Moreover, $\widehat{\psi}_{Sk}(D_{L})$ is the unique enhanced state of $D_{L}$ in $A$ with homotopical grading $f_{\text{min}}(D_{L})$, and therefore does not vanish in the $H_{Sk}(D_{L})$, see \cite[Theorem 7.1]{Rob2013}. Hence we have the following lemma.\par

\begin{lemma}\label{lemma 2.2}
    Let \( D_L \subset A \) be a closed braid diagram with \( b \) strands, whose braid axis coincides with the core of the annulus, representing a transverse link \( L \). Then the extreme homotopical grading satisfies
    $f_{\min}(D_L) = f_{\min}(D_L, sl(L)) = f_{\min}^{H}(D_L, sl(L); G) = f_{\min}^{H}(D_L; G) = -b$ for coefficient group \( G = \mathbb{Z}\) or $\mathbb{Z}_{2}$. Moreover, $S_{-b}(D_L) = S_{sl(L), -b}(D_L) = \{ \widehat{\psi}_{Sk}(D_L) \}.$
\end{lemma}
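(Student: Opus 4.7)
The plan is to assemble the observations already articulated in the paragraph preceding the lemma together with a direct grading calculation for $\widehat{\psi}_{Sk}(D_L)$. First I will compute the tri-grading of $\widehat{\psi}_{Sk}(D_L)$. Since the braid axis coincides with the core of $A$, every one of the $b$ circles in $D_L^o$ is non-trivial, and each is labeled $v_-$ in $\widehat{\psi}_{Sk}(D_L)$; trivial circles are absent. Hence $\fgr(\widehat{\psi}_{Sk}(D_L)) = 0-b = -b$, and using $|\alpha^o|=n_-$ gives $\hgr = -n_-+|\alpha^o| = 0$ and $\qgr = n_+ - 2n_- + n_- + 0 - b = n_+ - n_- - b = sl(L)$. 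In particular $\widehat{\psi}_{Sk}(D_L) \in S_{sl(L),-b}(D_L)$.

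Second, I will invoke the uniqueness statement quoted just before the lemma (via \cite[Theorem 7.1]{Rob2013}) that $\widehat{\psi}_{Sk}(D_L)$ is the unique enhanced state of $D_L$ attaining $f_{\min}(D_L)$. Combined with the grading computation, this yields $f_{\min}(D_L) = -b$ and $S_{-b}(D_L) = \{\widehat{\psi}_{Sk}(D_L)\}$. Since $\widehat{\psi}_{Sk}(D_L)$ sits in quantum grading $sl(L)$, the chain of inclusions $\{\widehat{\psi}_{Sk}(D_L)\} \subseteq S_{sl(L),-b}(D_L) \subseteq S_{-b}(D_L)$ collapses to $S_{sl(L),-b}(D_L) = \{\widehat{\psi}_{Sk}(D_L)\}$, and therefore $f_{\min}(D_L, sl(L)) = -b$.

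Third, for the homology-level statements I will show that $\widehat{\psi}_{Sk}(D_L)$ represents a nonzero class in $H_{Sk}(D_L;G)$ for $G = \mathbb{Z}$ or $\mathbb{Z}_2$. Closedness is already noted in the preceding paragraph. It is not a coboundary because $\partial_{Sk}$ preserves $\fgr$, so any preimage must lie in $S_{-b}(D_L) = \{\widehat{\psi}_{Sk}(D_L)\}$; but $\widehat{\psi}_{Sk}(D_L)$ sits in homological grading $0$, whereas any $\partial_{Sk}$-preimage would need homological grading $-1$, a contradiction in either coefficient ring. Thus $[\widehat{\psi}_{Sk}(D_L)] \neq 0$, proving $f_{\min}^{H}(D_L,sl(L);G) \leq -b$ and $f_{\min}^{H}(D_L;G) \leq -b$; the reverse inequalities follow from the elementary bound $f_{\min}^{H} \geq f_{\min} = -b$.

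The main obstacle, and really the only nontrivial content, is the cited uniqueness of $\widehat{\psi}_{Sk}(D_L)$ at the extreme $f$-grading. A self-contained justification uses a radial-arc argument: any radial arc in $A$ transverse to $D_L$ meets every resolution $D_L(\alpha)$ in exactly $b$ points (resolutions only modify neighborhoods of crossings), each non-trivial circle is a simple closed curve of winding number $\pm 1$ and hence meets this arc in exactly one point, and trivial circles meet it in an even number of points. Therefore $|Z_{nt}(D_L(\alpha))| \leq b$; equality forces each non-trivial circle to be parallel to the core, which in turn forces the inside smoothing to induce the identity matching on strands, i.e.\ $\alpha = \alpha^o$, and additionally forces every non-trivial circle to be labeled $v_-$ in order to realize $\fgr = -b$.
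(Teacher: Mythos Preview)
Your proposal is correct and follows essentially the same approach as the paper: the paper treats Lemma~2.2 as an immediate consequence of the preceding paragraph, which computes the tri-grading of $\widehat{\psi}_{Sk}(D_L)$, cites \cite[Theorem~7.1]{Rob2013} for the uniqueness of this enhanced state at the minimal $f$-grading, and notes non-vanishing in homology. Your write-up does the same, with the added benefit of an explicit non-coboundary argument and a self-contained sketch of the uniqueness via a radial-arc count; one small imprecision in that sketch is that a non-trivial circle meets a generic radial arc in an \emph{odd} number of points rather than exactly one, but the inequality $|Z_{nt}|\le b$ and the equality analysis you draw from it are unaffected.
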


\begin{theorem}\label{prop 2.2}
    Let $D_{L}$ and $D_{L}'$ be two closed braid representatives of a transverse knot $L$, such that $D_{L}'$ is obtained from $D_{L}$ by a positive braid stabilization (or, $D_{L}$ is obtained from $D_{L}'$ by positive destabilization). Then there are cochain maps $\phi_{s}: C_{Sk}^{i,j,k}(D_{L})\to C_{Sk}^{i,j,k-1}(D_{L}')$, and $\phi_{d}: C_{Sk}^{i,j,k}(D'_{L}) \to C_{Sk}^{i,j,k+1}(D'_{L})$, such that they give isomorphisms: $\phi_{s}: C^{0,sl(L),f_{\text{min}}(D_{L})}_{Sk}(D_{L})\to$ $C^{0,sl(L),f_{\text{min}}(D_{L}')}_{Sk}(D_{L}')$, and $\phi_{d}: C^{0,sl(L),f_{\text{min}}(D_{L}')}_{Sk}(D_{L}') \to C^{0,sl(L),f_{\text{min}}(D_{L})}_{Sk}(D_{L})$ which are inverse to each other, and satisfy: $\phi_{s}(\widehat{\psi}_{Sk}(D_{L}))= \widehat{\psi}_{Sk}(D_{L}')$, and $\phi_{d}(\widehat{\psi}_{Sk}(D_{L}'))= \widehat{\psi}_{Sk}(D_{L})$.\par
    Moreover, if two closed braid representatives $B_{L}$, and $B_{L}'$ of $L$, are related by braid conjugation, or braid isotopies (which correspond to allowable Reidemeister  Moves $\Rom{2}$ and $\Rom{3}$ in $A$), then the associated quasi-isomorphisms $\rho_{Sk}:C_{Sk}(B_{L})\to C_{Sk}(B_{L}')$ satisfy $\rho_{Sk}(\widehat{\psi}_{Sk}(B_{L})) = \pm\widehat{\psi}_{Sk}(B_{L}')$.
\end{theorem}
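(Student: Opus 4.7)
My plan is to handle positive stabilization (Part 1) and the conjugation / braid-isotopy moves (Part 2) by separate arguments. The common input in both parts is Lemma 2.2, which shows that $C_{Sk}^{0,sl(L),f_{\text{min}}}$ is a one-dimensional $\mathbb{Z}$-module generated by the canonical cocycle.

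For Part 1, I will write $D_L' = D_L \cdot \sigma_b$ with the new crossing numbered $n+1$ and decompose $C_{Sk}(D_L') = C_{Sk,0} \oplus C_{Sk,1}$ according to its resolution, so that the differential takes the form $d' = d_0 + d_1 + \partial_{AB}$ with edge saddle $\partial_{AB}: C_{Sk,0}\to C_{Sk,1}$. Because the $0$-resolution of a positive crossing is the oriented smoothing, the new strand closes into a single non-trivial circle $Z_{new}$ parallel to the boundary of $A$, giving a canonical identification $C_{Sk,0} \cong C_{Sk}(D_L)\otimes V_{nt}$ with $V_{nt}=\mathbb{Z}\langle v_+,v_-\rangle$ and $d_0 = \partial_{Sk}\otimes 1$; since $\alpha_{n+1}^o = 0$, the canonical cocycle splits as $\widehat{\psi}_{Sk}(D_L') = \widehat{\psi}_{Sk}(D_L)\otimes v_-$. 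I will then define $\phi_d(x\otimes v_-)=x$, $\phi_d(x\otimes v_+)=0$, $\phi_d|_{C_{Sk,1}}=0$, which is visibly a cochain map of the stated grading because $d_1$ and $\partial_{AB}$ both land in $\ker\phi_d$. For $\phi_s$ I will start from the naive formula $\phi_s(x) = x\otimes v_-$ and correct it by a term $h:C_{Sk}(D_L)\to C_{Sk,1}$ satisfying $h\partial_{Sk} - d_1 h = \partial_{AB}(-\otimes v_-)$, supplied by an explicit contracting homotopy on the acyclic cup–cap summand of $C_{Sk,1}$ produced by the $1$-resolution of the stabilizing crossing. On the bigrading $(0,sl(L),f_{\text{min}})$ no correction is needed: the would-be saddle merges $Z_{new}$ (label $v_-$) with the non-trivial circle in $D_L^o$ adjacent to the new arc (also labeled $v_-$), and Case 2c of the partial order $\prec'$ covers only a $\{v_+,v_-\}$ merge, so the term vanishes. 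By Lemma 2.2 the composites $\phi_d\phi_s$ and $\phi_s\phi_d$ restrict to the identity on the one-dimensional piece at $(0,sl(L),f_{\text{min}})$, and the identifications $\phi_s(\widehat{\psi}_{Sk}(D_L)) = \widehat{\psi}_{Sk}(D_L')$, $\phi_d(\widehat{\psi}_{Sk}(D_L')) = \widehat{\psi}_{Sk}(D_L)$ are immediate from the splitting $\widehat{\psi}_{Sk}(D_L') = \widehat{\psi}_{Sk}(D_L)\otimes v_-$.

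For Part 2, every allowable Reidemeister II or III move in $A$ (and hence every braid conjugation or braid isotopy) produces, via the invariance proof in Roberts, a chain homotopy equivalence $\rho_{Sk}:C_{Sk}(B_L)\to C_{Sk}(B_L')$ preserving the full $(h,q,f)$-tri-grading. Because $\partial_{Sk}$ preserves $f$, the subspace $C_{Sk}^{*,sl(L),f_{\text{min}}}$ is a sub-cochain-complex, which by Lemma 2.2 is concentrated in homological degree $0$ and equal to $\mathbb{Z}\cdot\widehat{\psi}_{Sk}$ on both sides; in particular both cohomologies $H_{Sk}^{0,sl(L),f_{\text{min}}}$ are free of rank $1$ generated by $[\widehat{\psi}_{Sk}]$. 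Since $\rho_{Sk}$ induces an isomorphism on these cohomologies, we must have $\rho_{Sk}(\widehat{\psi}_{Sk}(B_L)) = c\cdot \widehat{\psi}_{Sk}(B_L')$ with $c = \pm 1$.

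The main obstacle is the construction of the correction $h$ in Part 1; everything else is formal once $h$ is in place, as the cochain-map axiom for $\phi_d$, the vanishing of $\partial_{AB}(-\otimes v_-)$ on the transverse invariant's grading, and the $c=\pm 1$ argument in Part 2 all reduce to direct checks. The analysis of $C_{Sk,1}$ is the one step that requires genuine work, since it depends on a careful tracking of how the surgery arc at the stabilizing crossing interacts with each enhanced state and on an explicit retraction arising from the Reidemeister-I-like simplification of the $1$-resolution.
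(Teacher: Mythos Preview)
Your treatment of $\phi_d$ and of Part~2 are both fine and in fact your Part~2 argument is slicker than the paper's: the paper simply cites Plamenevskaya's explicit maps $\rho_2,\rho_3$ (which happen to preserve the $f$-grading), whereas you extract the conclusion formally from Lemma~2.2 and the fact that $\rho_{Sk}$ respects the tri-grading.

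The problem is your construction of $\phi_s$. You propose $\phi_s(x)=x\otimes v_- + h(x)$ with $h$ landing in $C_{Sk,1}$, and you claim $h$ is furnished by ``an explicit contracting homotopy on the acyclic cup--cap summand of $C_{Sk,1}$.'' But for a \emph{positive} stabilization the $1$-resolution of the new crossing does not produce a disjoint cup--cap; it reabsorbs the extra strand, so the $1$-resolved diagram is planar-isotopic to $D_L$ itself and $C_{Sk,1}\cong C_{Sk}(D_L)$ up to grading shift. There is no acyclic summand to contract, and your mechanism for producing $h$ does not exist as stated. (You may be thinking of the Reidemeister~I invariance argument, where the trivial unknot component does yield such an acyclic piece, but that is a different local picture.)

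The paper avoids this issue entirely by choosing $\phi_s$ differently: rather than the naive $x\mapsto x\otimes v_-$, it sets $\phi_s$ to \emph{zero} whenever the circle $P$ adjacent to the stabilizing arc is labeled $v_+$ or $w_+$, and $x\mapsto x\otimes v_-$ only when $P$ carries $v_-$ or $w_-$. With this choice the image already lies in $\ker\partial_{AB}$ (a $-$ label on $P$ together with $v_-$ on $U$ never merges nontrivially under $\prec'$), so no correction into $C_{Sk,1}$ is needed at all. The Remark following the theorem records yet another explicit choice, $v_+\mapsto v_+\otimes v_- - v_-\otimes v_+$, etc., again with no $h$. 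Since you correctly observe that on the extreme grading $(0,sl(L),f_{\min})$ every circle is labeled $v_-$, both your formula and the paper's agree there, and the isomorphism claim follows from Lemma~2.2 either way; but to get a globally defined cochain map you should adopt the paper's simpler recipe rather than pursue a correction term whose source you have misidentified.
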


\begin{proof}
We can define $\phi_{s}$ in the following way, see Figure \ref{fig:stab_map}. When the circle corresponding to $P$ is nontrivial, $\phi_{s}$ maps $x\mapsto x\otimes v_{-}$, when $x=v_{-}$. Here the first term of the tensor product denotes labeling on $P$ and the second term denotes labeling on $U$. Similarly, when the circle corresponding to $P$ is trivial, $\phi_{s}(x)= x\otimes v_{-}$, when $x=w_{-}$. We define $\phi_{s}$ to be zero, when $x\in \{v_{+},w_{+}\}$. Note that $\phi_{s}$ is a cochain map. 
\begin{figure}[htp]
    \centering
    \tikzset{every picture/.style={line width=0.75pt}} 

\begin{tikzpicture}[x=0.75pt,y=0.75pt,yscale=-1,xscale=1]

\draw [line width=0.75]    (37.33,63) -- (37.33,123.5) ;
\draw [line width=0.75]    (152.31,84.65) -- (140.62,99.58) ;
\draw [line width=0.75]    (144.15,90.35) -- (139.08,85.27) ;
\draw [line width=0.75]    (153.85,99.73) -- (148.77,94.65) ;
\draw [line width=0.75]    (152.31,84.65) .. controls (156,78.96) and (165.14,85.4) .. (165.08,91.27) .. controls (165.01,97.14) and (157.85,102.5) .. (153.85,99.73) ;
\draw [line width=0.75]    (139.54,62.04) -- (139.08,85.27) ;
\draw [line width=0.75]    (140.62,99.58) -- (140.15,122.81) ;
\draw    (72.5,95.5) -- (114,95.5) ;
\draw [shift={(116,95.5)}, rotate = 180] [color={rgb, 255:red, 0; green, 0; blue, 0 }  ][line width=0.75]    (10.93,-3.29) .. controls (6.95,-1.4) and (3.31,-0.3) .. (0,0) .. controls (3.31,0.3) and (6.95,1.4) .. (10.93,3.29)   ;
\draw [line width=0.75]    (356.67,-4.17) -- (356.67,56.33) ;
\draw    (415.83,27.67) -- (457.33,27.67) ;
\draw [shift={(459.33,27.67)}, rotate = 180] [color={rgb, 255:red, 0; green, 0; blue, 0 }  ][line width=0.75]    (10.93,-3.29) .. controls (6.95,-1.4) and (3.31,-0.3) .. (0,0) .. controls (3.31,0.3) and (6.95,1.4) .. (10.93,3.29)   ;
\draw   (370.94,151.12) .. controls (370.94,145.36) and (375.61,140.7) .. (381.36,140.7) .. controls (387.11,140.7) and (391.78,145.36) .. (391.78,151.12) .. controls (391.78,156.87) and (387.11,161.53) .. (381.36,161.53) .. controls (375.61,161.53) and (370.94,156.87) .. (370.94,151.12) -- cycle ;
\draw [line width=0.75]    (357.33,121.33) -- (357.33,181.83) ;
\draw    (417.17,151.5) -- (458.67,151.5) ;
\draw [shift={(460.67,151.5)}, rotate = 180] [color={rgb, 255:red, 0; green, 0; blue, 0 }  ][line width=0.75]    (10.93,-3.29) .. controls (6.95,-1.4) and (3.31,-0.3) .. (0,0) .. controls (3.31,0.3) and (6.95,1.4) .. (10.93,3.29)   ;
\draw    (490.33,119.67) .. controls (494.67,166.5) and (500.67,121.5) .. (517,142.5) ;
\draw    (490.33,177.5) .. controls (492,134.17) and (500.67,175.67) .. (517,154.67) ;
\draw    (517,142.5) .. controls (518.67,146.5) and (520,149.17) .. (517,154.67) ;
\draw    (356.92,66.92) -- (356.92,105.25) ;
\draw [shift={(356.92,107.25)}, rotate = 270] [color={rgb, 255:red, 0; green, 0; blue, 0 }  ][line width=0.75]    (10.93,-3.29) .. controls (6.95,-1.4) and (3.31,-0.3) .. (0,0) .. controls (3.31,0.3) and (6.95,1.4) .. (10.93,3.29)   ;
\draw    (496.75,64.25) -- (496.75,105.75) ;
\draw [shift={(496.75,107.75)}, rotate = 270] [color={rgb, 255:red, 0; green, 0; blue, 0 }  ][line width=0.75]    (10.93,-3.29) .. controls (6.95,-1.4) and (3.31,-0.3) .. (0,0) .. controls (3.31,0.3) and (6.95,1.4) .. (10.93,3.29)   ;
\draw [color=blue  ,draw opacity=1 ][fill={rgb, 255:red, 155; green, 155; blue, 155 }  ,fill opacity=1 ][line width=0.75]    (357.33,151.12) -- (370.94,151.12) ;
\draw    (284.5,22) -- (284.5,37.75) ;
\draw  [color={rgb, 255:red, 252; green, 3; blue, 3 }  ,draw opacity=1 ][fill={rgb, 255:red, 252; green, 3; blue, 3 }  ,fill opacity=1 ][line width=0.75]  (54,95.32) .. controls (54,93.62) and (52.62,92.25) .. (50.92,92.25) .. controls (49.22,92.25) and (47.85,93.62) .. (47.85,95.32) .. controls (47.85,97.02) and (49.22,98.4) .. (50.92,98.4) .. controls (52.62,98.4) and (54,97.02) .. (54,95.32) -- cycle ;
\draw  [color={rgb, 255:red, 252; green, 3; blue, 3 }  ,draw opacity=1 ][fill={rgb, 255:red, 252; green, 3; blue, 3 }  ,fill opacity=1 ][line width=0.75]  (160,91.72) .. controls (160,90.02) and (158.62,88.65) .. (156.92,88.65) .. controls (155.22,88.65) and (153.85,90.02) .. (153.85,91.72) .. controls (153.85,93.42) and (155.22,94.8) .. (156.92,94.8) .. controls (158.62,94.8) and (160,93.42) .. (160,91.72) -- cycle ;
\draw  [color={rgb, 255:red, 252; green, 3; blue, 3 }  ,draw opacity=1 ][fill={rgb, 255:red, 252; green, 3; blue, 3 }  ,fill opacity=1 ][line width=0.75]  (295.55,29.72) .. controls (295.55,28.02) and (294.18,26.65) .. (292.48,26.65) .. controls (290.78,26.65) and (289.4,28.02) .. (289.4,29.72) .. controls (289.4,31.42) and (290.78,32.8) .. (292.48,32.8) .. controls (294.18,32.8) and (295.55,31.42) .. (295.55,29.72) -- cycle ;
\draw [line width=0.75]    (288.96,147.02) -- (279.41,159.2) ;
\draw [line width=0.75]    (282.3,151.67) -- (278.16,147.52) ;
\draw [line width=0.75]    (290.22,159.33) -- (286.07,155.18) ;
\draw [line width=0.75]    (288.96,147.02) .. controls (291.97,142.37) and (299.44,147.63) .. (299.39,152.42) .. controls (299.34,157.21) and (293.48,161.59) .. (290.22,159.33) ;
\draw [line width=0.75]    (278.02,141.12) -- (278.16,147.52) ;
\draw [line width=0.75]    (279.41,159.2) -- (279.41,166.4) ;
\draw  [color={rgb, 255:red, 252; green, 3; blue, 3 }  ,draw opacity=1 ][fill={rgb, 255:red, 252; green, 3; blue, 3 }  ,fill opacity=1 ][line width=0.75]  (295.44,152.62) .. controls (295.44,150.92) and (294.06,149.54) .. (292.36,149.54) .. controls (290.66,149.54) and (289.28,150.92) .. (289.28,152.62) .. controls (289.28,154.31) and (290.66,155.69) .. (292.36,155.69) .. controls (294.06,155.69) and (295.44,154.31) .. (295.44,152.62) -- cycle ;
\draw  [color={rgb, 255:red, 252; green, 3; blue, 3 }  ,draw opacity=1 ][fill={rgb, 255:red, 252; green, 3; blue, 3 }  ,fill opacity=1 ][line width=0.75]  (372.15,30.32) .. controls (372.15,28.62) and (370.78,27.25) .. (369.08,27.25) .. controls (367.38,27.25) and (366,28.62) .. (366,30.32) .. controls (366,32.02) and (367.38,33.4) .. (369.08,33.4) .. controls (370.78,33.4) and (372.15,32.02) .. (372.15,30.32) -- cycle ;
\draw  [color={rgb, 255:red, 252; green, 3; blue, 3 }  ,draw opacity=1 ][fill={rgb, 255:red, 252; green, 3; blue, 3 }  ,fill opacity=1 ][line width=0.75]  (384.44,151.12) .. controls (384.44,149.42) and (383.06,148.04) .. (381.36,148.04) .. controls (379.66,148.04) and (378.28,149.42) .. (378.28,151.12) .. controls (378.28,152.81) and (379.66,154.19) .. (381.36,154.19) .. controls (383.06,154.19) and (384.44,152.81) .. (384.44,151.12) -- cycle ;
\draw  [color={rgb, 255:red, 252; green, 3; blue, 3 }  ,draw opacity=1 ][fill={rgb, 255:red, 252; green, 3; blue, 3 }  ,fill opacity=1 ][line width=0.75]  (509.69,149.62) .. controls (509.69,147.92) and (508.31,146.54) .. (506.61,146.54) .. controls (504.91,146.54) and (503.53,147.92) .. (503.53,149.62) .. controls (503.53,151.31) and (504.91,152.69) .. (506.61,152.69) .. controls (508.31,152.69) and (509.69,151.31) .. (509.69,149.62) -- cycle ;

\draw (491.33,17.9) node [anchor=north west][inner sep=0.75pt]    {$0$};
\draw (426.67,131.73) node [anchor=north west][inner sep=0.75pt]    {$m$};
\draw (372.5,163.4) node [anchor=north west][inner sep=0.75pt]    {$U$};
\draw (340.5,142.9) node [anchor=north west][inner sep=0.75pt]    {$P$};
\draw (337.5,24.23) node [anchor=north west][inner sep=0.75pt]    {$P$};
\draw (370,72.4) node [anchor=north west][inner sep=0.75pt]    {$\phi _{s}$};
\draw (507.5,68.9) node [anchor=north west][inner sep=0.75pt]    {$\phi _{s}$};
\draw (240.8,19.7) node [anchor=north west][inner sep=0.75pt]    {$C_{Sk}( \ \ \ \ \ \ ) :$};
\draw (32,133.9) node [anchor=north west][inner sep=0.75pt]    {$D_{L}$};
\draw (132,132.9) node [anchor=north west][inner sep=0.75pt]    {$D_{L} '$};
\draw (241.4,141.89) node [anchor=north west][inner sep=0.75pt]    {$C_{Sk}( \ \ \ \ \ \ ) :$};

\end{tikzpicture}
    \caption{Invariance under positive stabilization.}
    \label{fig:stab_map}
\end{figure}
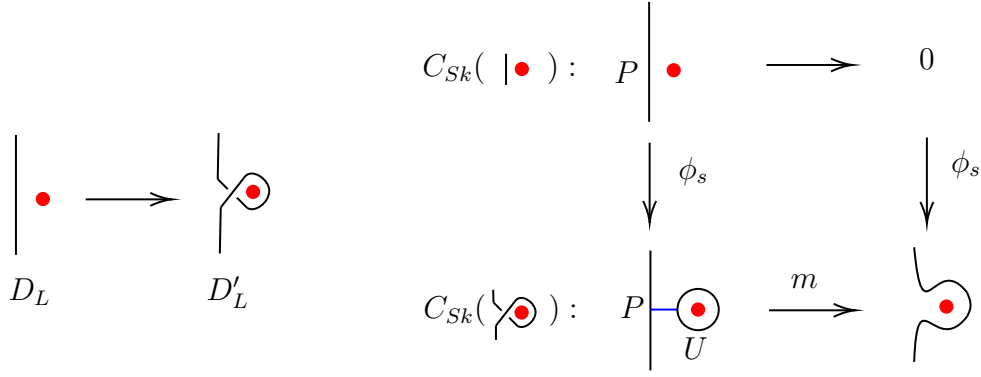\par
On the other hand, we define $\phi_{d}$ in the following way, see Figure \ref{fig:destab_map}. The map $\phi_{d}$ maps $ x\otimes v_{-} \mapsto x$. Here the first term of the tensor product denotes labeling on $P$ and the second term denotes labeling on $U$. We define $\phi_{d}$ to be zero, when labeling on $U$ is $v_{+}$. Now $\phi_{d}: C^{0,sl(L),f_{\text{min}}(D_{L})}_{Sk}(D_{L})\to$ $C^{0,sl(L),f_{\text{min}}(D_{L}')}_{Sk}(D_{L}')$, and $\phi_{d}: C^{0,sl(L),f_{\text{min}}(D_{L}')}_{Sk}(D_{L}') \to C^{0,sl(L),f_{\text{min}}(D_{L})}_{Sk}(D_{L})$ are desired isomorphisms due to Lemma \ref{lemma 2.2}.  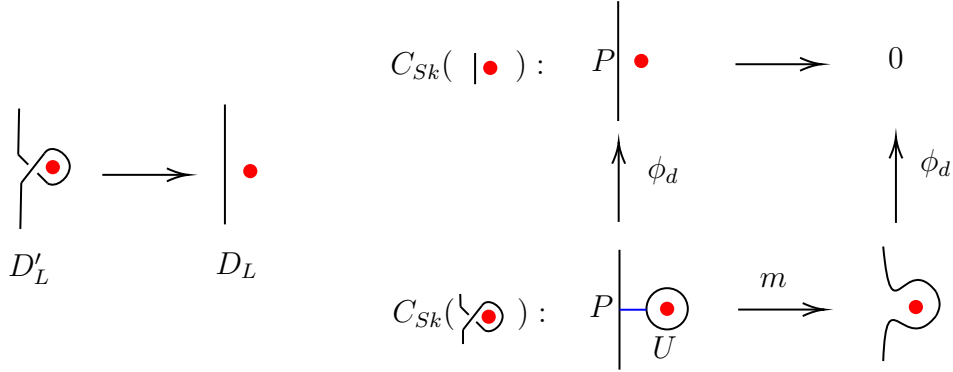
\begin{figure}[htp]
    \centering
    \tikzset{every picture/.style={line width=0.75pt}} 

\begin{tikzpicture}[x=0.75pt,y=0.75pt,yscale=-1,xscale=1]

\draw [line width=0.75]    (160.33,74.5) -- (160.33,135) ;
\draw [line width=0.75]    (69.31,99.15) -- (57.62,114.08) ;
\draw [line width=0.75]    (61.15,104.85) -- (56.08,99.77) ;
\draw [line width=0.75]    (70.85,114.23) -- (65.77,109.15) ;
\draw [line width=0.75]    (69.31,99.15) .. controls (73,93.46) and (82.14,99.9) .. (82.08,105.77) .. controls (82.01,111.64) and (74.85,117) .. (70.85,114.23) ;
\draw [line width=0.75]    (56.54,76.54) -- (56.08,99.77) ;
\draw [line width=0.75]    (57.62,114.08) -- (57.15,137.31) ;
\draw    (98.5,110) -- (140,110) ;
\draw [shift={(142,110)}, rotate = 180] [color={rgb, 255:red, 0; green, 0; blue, 0 }  ][line width=0.75]    (10.93,-3.29) .. controls (6.95,-1.4) and (3.31,-0.3) .. (0,0) .. controls (3.31,0.3) and (6.95,1.4) .. (10.93,3.29)   ;
\draw [line width=0.75]    (358.67,22.33) -- (358.67,82.83) ;
\draw    (417.83,54.17) -- (459.33,54.17) ;
\draw [shift={(461.33,54.17)}, rotate = 180] [color={rgb, 255:red, 0; green, 0; blue, 0 }  ][line width=0.75]    (10.93,-3.29) .. controls (6.95,-1.4) and (3.31,-0.3) .. (0,0) .. controls (3.31,0.3) and (6.95,1.4) .. (10.93,3.29)   ;
\draw   (372.94,177.62) .. controls (372.94,171.86) and (377.61,167.2) .. (383.36,167.2) .. controls (389.11,167.2) and (393.78,171.86) .. (393.78,177.62) .. controls (393.78,183.37) and (389.11,188.03) .. (383.36,188.03) .. controls (377.61,188.03) and (372.94,183.37) .. (372.94,177.62) -- cycle ;
\draw [line width=0.75]    (359.33,147.83) -- (359.33,208.33) ;
\draw    (419.17,178) -- (460.67,178) ;
\draw [shift={(462.67,178)}, rotate = 180] [color={rgb, 255:red, 0; green, 0; blue, 0 }  ][line width=0.75]    (10.93,-3.29) .. controls (6.95,-1.4) and (3.31,-0.3) .. (0,0) .. controls (3.31,0.3) and (6.95,1.4) .. (10.93,3.29)   ;
\draw    (492.33,146.17) .. controls (496.67,193) and (502.67,148) .. (519,169) ;
\draw    (492.33,204) .. controls (494,160.67) and (502.67,202.17) .. (519,181.17) ;
\draw    (519,169) .. controls (520.67,173) and (522,175.67) .. (519,181.17) ;
\draw    (358.92,95.42) -- (358.92,133.75) ;
\draw [shift={(358.92,93.42)}, rotate = 90] [color={rgb, 255:red, 0; green, 0; blue, 0 }  ][line width=0.75]    (10.93,-3.29) .. controls (6.95,-1.4) and (3.31,-0.3) .. (0,0) .. controls (3.31,0.3) and (6.95,1.4) .. (10.93,3.29)   ;
\draw    (498.75,92.75) -- (498.75,134.25) ;
\draw [shift={(498.75,90.75)}, rotate = 90] [color={rgb, 255:red, 0; green, 0; blue, 0 }  ][line width=0.75]    (10.93,-3.29) .. controls (6.95,-1.4) and (3.31,-0.3) .. (0,0) .. controls (3.31,0.3) and (6.95,1.4) .. (10.93,3.29)   ;
\draw [color=blue  ,draw opacity=1 ][line width=0.75]    (359.33,178.08) -- (372.94,178.08) ;
\draw    (286.5,48.5) -- (286.5,64.25) ;
\draw [line width=0.75]    (289.96,176.02) -- (280.41,188.2) ;
\draw [line width=0.75]    (283.3,180.67) -- (279.16,176.52) ;
\draw [line width=0.75]    (291.22,188.33) -- (287.07,184.18) ;
\draw [line width=0.75]    (289.96,176.02) .. controls (292.97,171.37) and (300.44,176.63) .. (300.39,181.42) .. controls (300.34,186.21) and (294.48,190.59) .. (291.22,188.33) ;
\draw [line width=0.75]    (279.02,170.12) -- (279.16,176.52) ;
\draw [line width=0.75]    (280.41,188.2) -- (280.41,195.4) ;
\draw  [color={rgb, 255:red, 252; green, 3; blue, 3 }  ,draw opacity=1 ][fill={rgb, 255:red, 252; green, 3; blue, 3 }  ,fill opacity=1 ][line width=0.75]  (76.6,106.12) .. controls (76.6,104.42) and (75.22,103.05) .. (73.52,103.05) .. controls (71.82,103.05) and (70.45,104.42) .. (70.45,106.12) .. controls (70.45,107.82) and (71.82,109.2) .. (73.52,109.2) .. controls (75.22,109.2) and (76.6,107.82) .. (76.6,106.12) -- cycle ;
\draw  [color={rgb, 255:red, 252; green, 3; blue, 3 }  ,draw opacity=1 ][fill={rgb, 255:red, 252; green, 3; blue, 3 }  ,fill opacity=1 ][line width=0.75]  (176.2,108.12) .. controls (176.2,106.42) and (174.82,105.05) .. (173.12,105.05) .. controls (171.42,105.05) and (170.05,106.42) .. (170.05,108.12) .. controls (170.05,109.82) and (171.42,111.2) .. (173.12,111.2) .. controls (174.82,111.2) and (176.2,109.82) .. (176.2,108.12) -- cycle ;
\draw  [color={rgb, 255:red, 252; green, 3; blue, 3 }  ,draw opacity=1 ][fill={rgb, 255:red, 252; green, 3; blue, 3 }  ,fill opacity=1 ][line width=0.75]  (373.4,52.52) .. controls (373.4,50.82) and (372.02,49.45) .. (370.32,49.45) .. controls (368.62,49.45) and (367.25,50.82) .. (367.25,52.52) .. controls (367.25,54.22) and (368.62,55.6) .. (370.32,55.6) .. controls (372.02,55.6) and (373.4,54.22) .. (373.4,52.52) -- cycle ;
\draw  [color={rgb, 255:red, 252; green, 3; blue, 3 }  ,draw opacity=1 ][fill={rgb, 255:red, 252; green, 3; blue, 3 }  ,fill opacity=1 ][line width=0.75]  (386.44,177.62) .. controls (386.44,175.92) and (385.06,174.54) .. (383.36,174.54) .. controls (381.66,174.54) and (380.28,175.92) .. (380.28,177.62) .. controls (380.28,179.31) and (381.66,180.69) .. (383.36,180.69) .. controls (385.06,180.69) and (386.44,179.31) .. (386.44,177.62) -- cycle ;
\draw  [color={rgb, 255:red, 252; green, 3; blue, 3 }  ,draw opacity=1 ][fill={rgb, 255:red, 252; green, 3; blue, 3 }  ,fill opacity=1 ][line width=0.75]  (511.89,176.72) .. controls (511.89,175.02) and (510.51,173.64) .. (508.81,173.64) .. controls (507.11,173.64) and (505.73,175.02) .. (505.73,176.72) .. controls (505.73,178.41) and (507.11,179.79) .. (508.81,179.79) .. controls (510.51,179.79) and (511.89,178.41) .. (511.89,176.72) -- cycle ;
\draw  [color={rgb, 255:red, 252; green, 3; blue, 3 }  ,draw opacity=1 ][fill={rgb, 255:red, 252; green, 3; blue, 3 }  ,fill opacity=1 ][line width=0.75]  (296.44,181.62) .. controls (296.44,179.92) and (295.06,178.54) .. (293.36,178.54) .. controls (291.66,178.54) and (290.28,179.92) .. (290.28,181.62) .. controls (290.28,183.31) and (291.66,184.69) .. (293.36,184.69) .. controls (295.06,184.69) and (296.44,183.31) .. (296.44,181.62) -- cycle ;
\draw  [color={rgb, 255:red, 252; green, 3; blue, 3 }  ,draw opacity=1 ][fill={rgb, 255:red, 252; green, 3; blue, 3 }  ,fill opacity=1 ][line width=0.75]  (297.24,56.02) .. controls (297.24,54.32) and (295.86,52.94) .. (294.16,52.94) .. controls (292.46,52.94) and (291.08,54.32) .. (291.08,56.02) .. controls (291.08,57.71) and (292.46,59.09) .. (294.16,59.09) .. controls (295.86,59.09) and (297.24,57.71) .. (297.24,56.02) -- cycle ;

\draw (493.33,44.4) node [anchor=north west][inner sep=0.75pt]    {$0$};
\draw (428.67,158.23) node [anchor=north west][inner sep=0.75pt]    {$m$};
\draw (374.5,189.9) node [anchor=north west][inner sep=0.75pt]    {$U$};
\draw (342.5,169.4) node [anchor=north west][inner sep=0.75pt]    {$P$};
\draw (343.5,45.4) node [anchor=north west][inner sep=0.75pt]    {$P$};
\draw (372,98.9) node [anchor=north west][inner sep=0.75pt]    {$\phi _{d}$};
\draw (509.5,95.4) node [anchor=north west][inner sep=0.75pt]    {$\phi _{d}$};
\draw (242.27,45.57) node [anchor=north west][inner sep=0.75pt]    {$C_{Sk}( \ \ \ \ \ \ ) :$};
\draw (243.33,170.93) node [anchor=north west][inner sep=0.75pt]    {$C_{Sk}( \ \ \ \ \ \ ) :$};
\draw (154,147.4) node [anchor=north west][inner sep=0.75pt]    {$D_{L}$};
\draw (49.5,147.9) node [anchor=north west][inner sep=0.75pt]    {$D_{L} '$};

\end{tikzpicture}
    \caption{Invariance under positive destabilization.}
    \label{fig:destab_map}
\end{figure}\par
For the second part, the maps $\rho_{Sk}^{2}$ and $\rho_{Sk}^{3}$, corresponding to the allowable Reidemeister moves $\Rom{2}$ and $\Rom{3}$ in the annulus $A$, can be described using the maps $\rho_{2}$ and $\rho_{3}$ as in \cite[Lemma 1]{Olga2006}, since they preserve the homotopical grading.
\end{proof}

\begin{remark}
    Note that the maps $\phi_{s}$, and $\phi_{d}$ in Proposition \ref{prop 2.2} are not unique. For example, we can define $\phi_{s}$ as $v_{+}\mapsto v_{+}\otimes v_{-}-v_{-}\otimes v_{+}$, and $v_{-}\mapsto v_{-}\otimes v_{-}$, and $w_{+}\mapsto 0$, and $w_{-}\mapsto w_{-}\otimes v_{-}$. Similarly, we can define $\phi_{d}$ differently as: $v_{+}\otimes v_{+}\mapsto 0$, and $v_{-}\otimes v_{+}\mapsto v_{+}$, and $w_{+}\otimes v_{+}\mapsto 0$, and $w_{-}\otimes v_{+}\mapsto 0$, and $x\otimes v_{-}\mapsto x$, where $x$ is any labeling on $P$.
\end{remark}

\section{Flow Category for Skein Homology and Invariance of Khovanov skein spectrum}\label{section 3}

In this section, we will first describe a framed flow category for the link diagram $D_L\subset A$ and then feed this into the Cohen-Jones-Segal construction, see \cite{CohJonSeg1995} and \cite{LipSar2014} to obtain a suspension spectrum. We will then prove the invariance of this suspension spectrum.

\subsection{Resolution Moduli spaces}
\begin{definition}[\cite{LipSar2014}, Definition 4.4]
    Let $u,v\in \{0,1\}^n$ be two states such that $v\prec u$, and $|u|-|v|=m$. Let $j_{1}<\cdots< j_{m}$ be the $m$ indices where $u$ and $v$ differ. Corresponding to this pair of states $u,v$, Lipshitz and Sarkar \cite{LipSar2014} defined an inclusion functor 
    $$ 
        \mathcal{I}_{u,v}: \scrC_{C}(m) \hookrightarrow \scrC_{C}(n) 
    $$
    in the following way. Given an object $w \in \{0,1\}^m$ in $\scrC_{C}(m)$, we define an object $w'\in \{0,1\}^n$ in $\scrC_{C}(n)$ by setting $w'_{i}= 0$ if $u_{i}=0$, and $w'_{i}= 1$ if $v_{i}=1$, and for the rest $w'_{j_{i}} = w_{i}$. The full subcategory spanned by the objects $\big\{w'\,|\, w\in \text{Ob}\big(\scrC_{C}(m)\big)\big\}$ is isomorphic to $\scrC_{C}(m)$, and $\mathcal{I}_{u,v}$ is defined to be this isomorphism.
\end{definition}
We will first associate to each index $n$ basic decorated resolution configuration $(D,x,y)$ in the annulus an $(n-1)$-dimensional $<n-1>$-manifold $\calM(D,x,y)$ together with an $(n-1)$-map
$$\calF:\calM(D,x,y)\longrightarrow \calM_{\scrC_C(n)}(\overline{1},\overline{0})$$
These spaces and maps will be constructed inductively using \cite[Proposition 5.2]{LipSar2014}, satisfying the following properties:
\begin{enumerate}[label = (RM-\arabic*)]
    \item Let $(D,x,y)$ be an index $n$ basic decorated resolution configuration in the annulus and $(E,z)$ be an index $m$ labeled resolution configuration in the annulus such that $(E,z) \in P(D,x,y)$. Let $x|$ and $y|$ the induced labelings on $s(E\setminus s(D)) = s(D)\setminus E$ and $D\setminus E$ respectively, and $z|$ the induced labelings for both $s(D\setminus E) = E\setminus D$ and $E\setminus s(D)$. Then there is a composition map: $$\circ : \calM(D\setminus E,z|,y|)\times \calM(E\setminus s(D),x|,z|)\longrightarrow \calM(D,x,y),$$
    such that the following diagram commutes:
    \[\begin{tikzcd}
	\calM(D\setminus E,z|,y|)\times \calM(E\setminus s(D),x|,z|) & \calM(D,x,y) \\
	\calM_{\scrC_C(n-m)}(\overline{1},\overline{0})\times \calM_{\scrC_C(m)}(\overline{1},\overline{0}) \\
	\calM_{\scrC_C(n)}(v,\overline{0})\times \calM_{\scrC_C(n)}(\overline{1},v) & \calM_{\scrC_C(n)}(\overline{1},\overline{0}).
	\arrow["\circ", from=1-1, to=1-2]
	\arrow["{\calF \times \calF}"', from=1-1, to=2-1]
	\arrow["\calF", from=1-2, to=3-2]
	\arrow["{ \mathcal{I}_{v,\overline{0}} \times \mathcal{I}_{\overline{1},v} }"', from=2-1, to=3-1]
	\arrow["\circ"', from=3-1, to=3-2]
\end{tikzcd}\]
Here $v$ is the state $(v_{1},\dots, v_{n})\in \{0,1\}^n$ such that $v_{i}=0$ if and only if the $i$-th arc of $D$ is an arc of $E$ as well.
    \item The faces of $\calM(D,x,y)$ are given by
    \begin{align*}
        \partial_{exp,i}\calM(D,x,y) \coloneq \coprod_{\substack{(E,z)\in P(D,x,y)\\ \text{ind}(D\setminus E)=i}} \circ (\calM(D\setminus E,z|,y|)\times \calM(E\setminus s(D),x|,z|)).
    \end{align*}
    \item The map $\calF$ is a covering map, and a local diffeomorphism.
    \item The covering map $\calF$ is trivial on each component of $\calM(D,x,y)$.
\end{enumerate}

Let $L \subset A\times I$ be an oriented link diagram, with $n$-crossings and an ordering of the crossings as before. We will construct Khovanov skein flow category, $\scrC_{Sk}(D_{L})$ similar to Khovanov flow category \cite[Definition 5.3]{LipSar2014} as a cover of the cube flow category $\scrC_C(n(D_{L}))$. We refer to \cite[Definition 3.28]{LipSar2014} for the notion of cover of a flow category. 

\subsection{$0$-dimensional moduli spaces}
For an index $1$ basic decorated resolution configuration $(D,x,y)$ in the annulus, we define $\calM(D,x,y)$ to consist of a single point and $$\calF: \calM(D,x,y)\longrightarrow \calM_{\scrC_C(1)}(\overline{1},\overline{0})$$ is the obvious map sending a point to a point.
\subsection{$1$-dimensional moduli spaces}

\begin{definition}
An index $2$ basic resolution configuration $D$ in the annulus is said to be a \textit{ladybug configuration} if the following conditions are satisfied.
\begin{enumerate}
    \item $Z(D)$ consists of a single trivial circle, say $Z$;
    \item The endpoints of the two arcs in $R(D)$, say $R_1$ and $R_2$ which intersects the circle $Z$ alternates around it.
\end{enumerate}
 We have exactly two ladybug configurations, depending on the choice of the axis which are given in Figure \ref{fig:ladybug}. We will mostly discuss case (ii) configuration as the other one is the same one in the Khovanov case.
\begin{figure}[htp]
    \centering
    \vspace{-0.8cm}

\tikzset{every picture/.style={line width=0.75pt}} 

\begin{tikzpicture}[x=0.75pt,y=0.75pt,yscale=-1,xscale=1]

\draw   (41,82.55) .. controls (41,65.98) and (54.43,52.55) .. (71,52.55) .. controls (87.57,52.55) and (101,65.98) .. (101,82.55) .. controls (101,99.12) and (87.57,112.55) .. (71,112.55) .. controls (54.43,112.55) and (41,99.12) .. (41,82.55) -- cycle ;
\draw [color=blue  ,draw opacity=1 ]   (71,52.55) -- (71,112.55) ;
\draw [color=blue  ,draw opacity=1 ]   (43,72.55) .. controls (-1,4.55) and (139,1.55) .. (100,72.55) ;
\draw  [color={rgb, 255:red, 252; green, 3; blue, 3 }  ,draw opacity=1 ][fill={rgb, 255:red, 252; green, 3; blue, 3 }  ,fill opacity=1 ][line width=0.75]  (116,79.92) .. controls (116,78.22) and (114.62,76.85) .. (112.92,76.85) .. controls (111.22,76.85) and (109.85,78.22) .. (109.85,79.92) .. controls (109.85,81.62) and (111.22,83) .. (112.92,83) .. controls (114.62,83) and (116,81.62) .. (116,79.92) -- cycle ;
\draw   (200,81.55) .. controls (200,64.98) and (213.43,51.55) .. (230,51.55) .. controls (246.57,51.55) and (260,64.98) .. (260,81.55) .. controls (260,98.12) and (246.57,111.55) .. (230,111.55) .. controls (213.43,111.55) and (200,98.12) .. (200,81.55) -- cycle ;
\draw [color=blue  ,draw opacity=1 ]   (230,51.55) -- (230,111.55) ;
\draw [color=blue  ,draw opacity=1 ]   (202,71.55) .. controls (158,3.55) and (298,0.55) .. (259,71.55) ;
\draw  [color={rgb, 255:red, 252; green, 3; blue, 3 }  ,draw opacity=1 ][fill={rgb, 255:red, 252; green, 3; blue, 3 }  ,fill opacity=1 ][line width=0.75]  (233,37.92) .. controls (233,36.22) and (231.62,34.85) .. (229.92,34.85) .. controls (228.22,34.85) and (226.85,36.22) .. (226.85,37.92) .. controls (226.85,39.62) and (228.22,41) .. (229.92,41) .. controls (231.62,41) and (233,39.62) .. (233,37.92) -- cycle ;

\draw (62,120) node [anchor=north west][inner sep=0.75pt]  [font=\footnotesize] [align=left] {(i)};
\draw (220,120) node [anchor=north west][inner sep=0.75pt]  [font=\footnotesize] [align=left] {(ii)};
\draw (100,19) node [anchor=north west][inner sep=0.75pt]  [font=\footnotesize] [align=left] {1};
\draw (260,19) node [anchor=north west][inner sep=0.75pt]  [font=\footnotesize] [align=left] {1};
\draw (74.5,76.5) node [anchor=north west][inner sep=0.75pt]  [font=\footnotesize] [align=left] {2};
\draw (234.5,76.5) node [anchor=north west][inner sep=0.75pt]  [font=\footnotesize] [align=left] {2};

\end{tikzpicture}
    \caption{Ladybug configurations.}
    \label{fig:ladybug}
\end{figure}
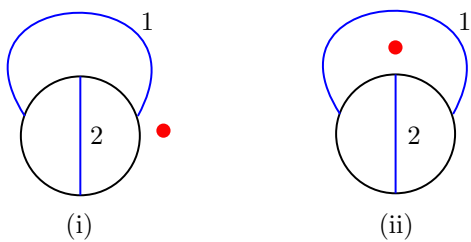
\end{definition}

An index $2$ basic decorated resolution configuration $(D,x,y)$ in the annulus is said to be a \textit{ladybug configuration} if $D$ is a ladybug configuration. The restrictions on the homotopical gradings and the triviality of the circle force such configurations to have, $y(Z) = w_{+}$ and $x(Z^s) = w_{-}$ where $Z^s$ is the unique circle in $Z(s(D))$. Figure \ref{fig:ladybug_decorated_resolution} illustrates the poset $P(D,w_-,w_+)$ for the ladybug configuration (ii) in Figure \ref{fig:ladybug}.

For any index $2$ basic decorated resolution configuration $(D,x,y)$ in the annulus, the number of labeled resolution configurations between $(D,y)$ and $(s(D),x)$, denoted by $m$ is $2$ whenever $D$ has a leaf or a co-leaf and is $4$ when $(D,x,y)$ is a ladybug configuration which is the only index $2$ basic configuration without leaves or co-leaves.

\begin{remark}
Notice that in the definition of the ladybug configuration, we require the circle to be trivial. There is a basic resolution configuration in the annulus where the circle is non-trivial as shown in Figure \ref{fig:non_ladybug} below.
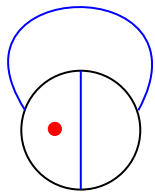
\begin{figure}[htp]
    \centering
    \vspace{-0.5cm}

\tikzset{every picture/.style={line width=0.75pt}} 

\begin{tikzpicture}[x=0.75pt,y=0.75pt,yscale=-1,xscale=1]

\draw   (26,79.55) .. controls (26,62.98) and (39.43,49.55) .. (56,49.55) .. controls (72.57,49.55) and (86,62.98) .. (86,79.55) .. controls (86,96.12) and (72.57,109.55) .. (56,109.55) .. controls (39.43,109.55) and (26,96.12) .. (26,79.55) -- cycle ;
\draw [color=blue  ,draw opacity=1 ]   (56,49.55) -- (56,109.55) ;
\draw [color=blue  ,draw opacity=1 ]   (28,69.55) .. controls (-16,1.55) and (124,-1.45) .. (85,69.55) ;
\draw  [color={rgb, 255:red, 252; green, 3; blue, 3 }  ,draw opacity=1 ][fill={rgb, 255:red, 252; green, 3; blue, 3 }  ,fill opacity=1 ][line width=0.75]  (46,78.92) .. controls (46,77.22) and (44.62,75.85) .. (42.92,75.85) .. controls (41.22,75.85) and (39.85,77.22) .. (39.85,78.92) .. controls (39.85,80.62) and (41.22,82) .. (42.92,82) .. controls (44.62,82) and (46,80.62) .. (46,78.92) -- cycle ;

\end{tikzpicture}
    \caption{A resolution configuration with a non-trivial circle but this does not give a decorated resolution configuration.}
    \label{fig:non_ladybug}
\end{figure}   
However, we choose to omit this case from our definition as it does not yield any basic decorated resolution configuration. For any choice of label on the circle $Z$, the homotopical gradings would force the label on $ Z^s$ to be $0$.
\end{remark}

\begin{figure}[htp]
    \centering
    \input{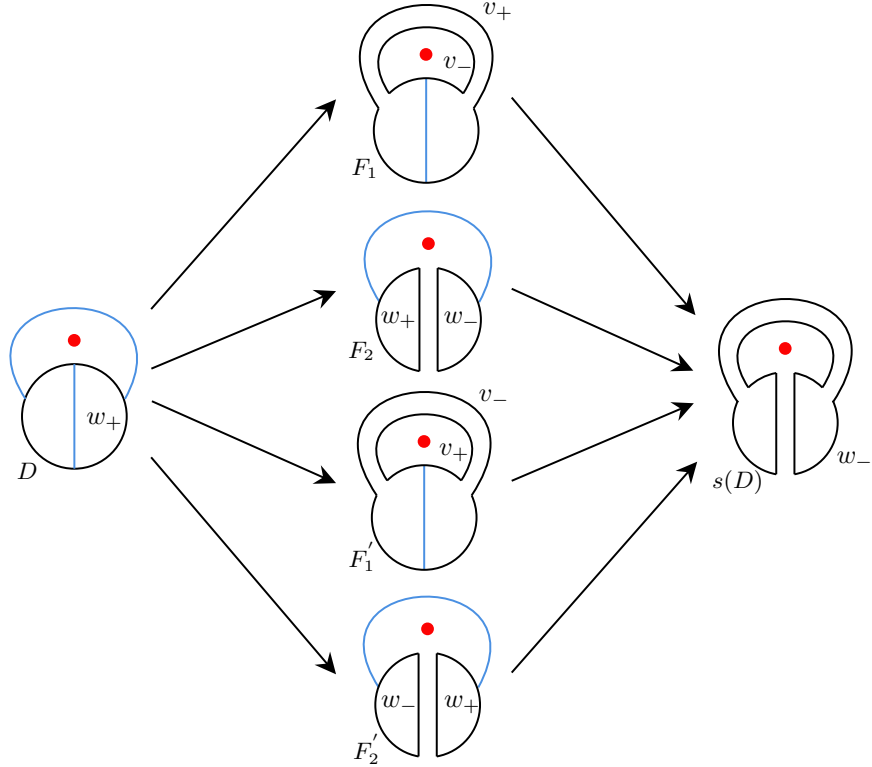}
    \caption{The poset $P(D,w_-,w_+)$ for the ladybug configuration (ii).}
    \label{fig:ladybug_decorated_resolution}
\end{figure}

For $m=2$, the boundary $\partial_{exp}\calM(D,x,y)$ has $2$ points and by the construction from \cite[Proposition 5.2]{LipSar2014} $\calM(D,x,y)$ is an interval and the map $\calF$ is a diffeomorphism. 

For $m=4$, there is a choice involved; we need to decide which pairs of the $4$ points in $\partial_{exp}\calM(D,x,y)$ bounds intervals. We will only be considering the case (ii) ladybug configuration here. Let $Z$ denote the unique circle in $Z(D)$, and $R(D) = \{R_1,R_2\}$ then for each $i\in \{1,2\}$ surgery along $R_i$ consists of two circles, $Z_{i1}$ and $Z_{i2}$, see Figure \ref{fig:ladybug_resolution}. 

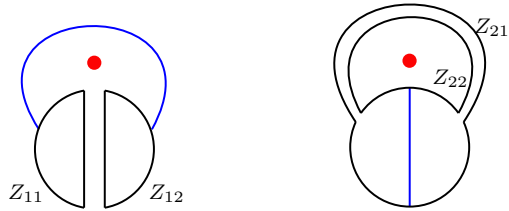
\begin{figure}[htp]
    \centering
    \tikzset{every picture/.style={line width=0.75pt}} 

\begin{tikzpicture}[x=0.75pt,y=0.75pt,yscale=-1,xscale=1]

\draw [color=blue  ,draw opacity=1 ]   (43,72.55) .. controls (-1,4.55) and (139,1.55) .. (100,72.55) ;
\draw  [color={rgb, 255:red, 252; green, 3; blue, 3 }  ,draw opacity=1 ][fill={rgb, 255:red, 252; green, 3; blue, 3 }  ,fill opacity=1 ][line width=0.75]  (74,38.92) .. controls (74,37.22) and (72.62,35.85) .. (70.92,35.85) .. controls (69.22,35.85) and (67.85,37.22) .. (67.85,38.92) .. controls (67.85,40.62) and (69.22,42) .. (70.92,42) .. controls (72.62,42) and (74,40.62) .. (74,38.92) -- cycle ;
\draw [color=blue  ,draw opacity=1 ]   (230,51.55) -- (230,111.55) ;
\draw [color={rgb, 255:red, 0; green, 0; blue, 0 }  ,draw opacity=1 ]   (202.8,68.87) .. controls (150,-9.73) and (310,-10.73) .. (257.2,68.87) ;
\draw  [color={rgb, 255:red, 252; green, 3; blue, 3 }  ,draw opacity=1 ][fill={rgb, 255:red, 252; green, 3; blue, 3 }  ,fill opacity=1 ][line width=0.75]  (233,37.92) .. controls (233,36.22) and (231.62,34.85) .. (229.92,34.85) .. controls (228.22,34.85) and (226.85,36.22) .. (226.85,37.92) .. controls (226.85,39.62) and (228.22,41) .. (229.92,41) .. controls (231.62,41) and (233,39.62) .. (233,37.92) -- cycle ;
\draw  [draw opacity=0] (65.79,112.1) .. controls (51.7,109.63) and (41,97.34) .. (41,82.55) .. controls (41,67.76) and (51.7,55.47) .. (65.79,53) -- (71,82.55) -- cycle ; \draw   (65.79,112.1) .. controls (51.7,109.63) and (41,97.34) .. (41,82.55) .. controls (41,67.76) and (51.7,55.47) .. (65.79,53) ;  
\draw  [draw opacity=0] (76.21,53) .. controls (90.3,55.47) and (101,67.76) .. (101,82.55) .. controls (101,97.34) and (90.3,109.63) .. (76.21,112.1) -- (71,82.55) -- cycle ; \draw   (76.21,53) .. controls (90.3,55.47) and (101,67.76) .. (101,82.55) .. controls (101,97.34) and (90.3,109.63) .. (76.21,112.1) ;  
\draw    (65.79,53) -- (65.79,112.1) ;
\draw    (76.21,53) -- (76.21,112.1) ;
\draw  [draw opacity=0] (257.2,68.87) .. controls (259,72.72) and (260,77.02) .. (260,81.55) .. controls (260,98.12) and (246.57,111.55) .. (230,111.55) .. controls (213.43,111.55) and (200,98.12) .. (200,81.55) .. controls (200,77.02) and (201,72.72) .. (202.8,68.87) -- (230,81.55) -- cycle ; \draw   (257.2,68.87) .. controls (259,72.72) and (260,77.02) .. (260,81.55) .. controls (260,98.12) and (246.57,111.55) .. (230,111.55) .. controls (213.43,111.55) and (200,98.12) .. (200,81.55) .. controls (200,77.02) and (201,72.72) .. (202.8,68.87) ;  
\draw  [draw opacity=0] (205.42,64.34) .. controls (210.85,56.61) and (219.83,51.55) .. (230,51.55) .. controls (240.17,51.55) and (249.15,56.61) .. (254.58,64.34) -- (230,81.55) -- cycle ; \draw   (205.42,64.34) .. controls (210.85,56.61) and (219.83,51.55) .. (230,51.55) .. controls (240.17,51.55) and (249.15,56.61) .. (254.58,64.34) ;  
\draw [color={rgb, 255:red, 0; green, 0; blue, 0 }  ,draw opacity=1 ]   (205.42,64.34) .. controls (171,0.27) and (291,-0.73) .. (254.58,64.34) ;

\draw (26,99) node [anchor=north west][inner sep=0.75pt]  [font=\tiny] [align=left] {$\displaystyle Z_{11}$};
\draw (97,99) node [anchor=north west][inner sep=0.75pt]  [font=\tiny] [align=left] {$\displaystyle Z_{12}$};
\draw (261,15) node [anchor=north west][inner sep=0.75pt]  [font=\tiny] [align=left] {$\displaystyle Z_{21}$};
\draw (240,41) node [anchor=north west][inner sep=0.75pt]  [font=\tiny] [align=left] {$\displaystyle Z_{22}$};

\end{tikzpicture}
    \caption{The two different resolutions of the ladybug configuration (ii).}
    \label{fig:ladybug_resolution}
\end{figure}

The \textit{right pair} in \cite[Section 5.4]{LipSar2014} then determines a bijection called the \textit{ladybug matching} between $\{Z_{11},Z_{12}\}$ and $\{Z_{21},Z_{22}\}$ by

\begin{align*}
    Z_{11} \longleftrightarrow Z_{21} \text{ and } Z_{12} \longleftrightarrow Z_{22}. 
\end{align*}

The four points in $\partial_{exp}\calM(D,x,y)$ are given by
\begin{align*}
    a &= [(D,w_{+})\prec' (s_{R_1}(D),v_{-}v_{+})\prec' (s(D),w_{-})]\\
    b &= [(D,w_{+})\prec' (s_{R_1}(D),v_{+}v_{-})\prec' (s(D),w_{-})]\\
    c &= [(D,w_{+})\prec' (s_{R_2}(D),w_{-}w_{+})\prec' (s(D),w_{-})]\\
    d &= [(D,w_{+})\prec' (s_{R_2}(D),w_{+}w_{-})\prec' (s(D),w_{-})]   
\end{align*}
So, using the ladybug matching we can define $\calM(D,x,y)$ to consist of two intervals, with boundaries $a\sqcup c$ and $b\sqcup d$ and the covering map $\calF:\calM(D,x,y)\longrightarrow \calM_{\scrC_C(2)}(\overline{1},\overline{0})$ in a unique way compatible with our choices, sending $a$ and $b$ to one endpoint of $\calM_{\scrC_C(2)}(\overline{1},\overline{0})$ and $c$ and $d$ to the other.
\subsection{$2$-dimensional moduli spaces}
For any index $3$ basic decorated resolution configuration in the annulus, we will be using \cite[Proposition 5.2]{LipSar2014} to define the $2$-dimensional moduli spaces $\calM(D,x,y)$. From the constructed $0$ - and $1$ - dimensional moduli spaces and the defined $\calF$ on them, we get a covering map,
$$\calF_{\partial}:\partial_{exp}\calM(D,x,y)\longrightarrow \partial \calM_{\scrC_C(3)}(\overline{1},\overline{0}).$$
Then by (E-2) of \cite[Proposition 5.2]{LipSar2014} $\calF_{\partial}$ is a covering map and in order to construct $\calM(D,x,y)$ by (E-3) of Proposition 5.2 in \cite{LipSar2014}, it is enough to show that the map is a trivial covering map on each component. Since, $\partial \calM_{\scrC_C(3)}(\overline{1},\overline{0})$ being the boundary of a permutohedron of order $3$, is a $6$-cycle it suffices to show that $\partial_{exp}\calM(D,x,y)$ is a disjoint union of $6$-cycles.

\begin{lemma}
    The graphs $\partial_{exp}\calM (D,x,y)$ and $\partial_{exp}\calM_*(D^*,y^*,x^*)$ are isomorphic.
\end{lemma}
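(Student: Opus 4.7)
The plan is to construct an explicit graph isomorphism using the order-reversing bijection on posets $P(D,x,y) \cong P(D^*,y^*,x^*)^{op}$ from Lemma 2.13 of \cite{LipSar2014}. Recall that for an index $3$ basic decorated resolution configuration, $\partial_{exp}\calM(D,x,y)$ is a $1$-dimensional $\langle 2\rangle$-manifold whose $1$-cells are indexed by intermediate $(E,z)\in P(D,x,y)$ with $\mathrm{ind}(D\setminus E)\in\{1,2\}$, sitting inside $\calM(D\setminus E, z|, y|)\times \calM(E\setminus s(D), x|, z|)$, and whose $0$-cells are indexed by maximal chains $(D,y)\prec'(E_1,z_1)\prec'(E_2,z_2)\prec'(s(D),x)$ with each step of index~$1$.

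The first step is to verify the bijection on $1$-cells. Let $(E,z) \mapsto (\widehat{E},\widehat{z})$ denote the order-reversing correspondence of Lemma 2.13 of \cite{LipSar2014}. Because the poset is reversed, if $\mathrm{ind}(D\setminus E)=i$ then $\mathrm{ind}(D^*\setminus \widehat{E})=3-i$, so the roles of the two factors are swapped. The remaining point is that the factor moduli spaces correspond under duality at indices $1$ and $2$, since these have already been constructed:  at index $1$ they are both single points, while at index $2$ the moduli spaces are either intervals (leaf/co-leaf case) or disjoint unions of two intervals (ladybug case). I would first dispatch the leaf and co-leaf cases, where the dual of a configuration with a leaf has a co-leaf and vice versa, and both produce a single interval, so the claim is immediate.

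The second step is the ladybug case, which is where the subtlety lies. Here I would verify that the dual $(D^*,y^*,x^*)$ of a ladybug decorated resolution configuration in the annulus is again of ladybug type (either type~(i) or type~(ii), since the axis is preserved by duality as a marking of $A$), and that the right-pair prescription used to define the ladybug matching in Figure~\ref{fig:ladybug_resolution} is preserved under the natural identification $Z(s(D))\longleftrightarrow Z(D^*)$ and $Z(D)\longleftrightarrow Z(s(D^*))$. Once this compatibility is established, the two intervals comprising $\calM(D,x,y)$ in the ladybug case match, as unordered pairs, the two intervals comprising $\calM(D^*,y^*,x^*)$, giving a bijection on $1$-cells that carries boundary points to boundary points.

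The third and final step is to promote the bijection on $1$-cells to a graph isomorphism, i.e.\ to check the incidence relations. Each $0$-cell of $\partial_{exp}\calM(D,x,y)$ arises as a boundary point of a $1$-cell indexed by $(E,z)$, which in turn records a refinement into a maximal chain through some further $(E',z')\in P(E,z)$ or $P(D,z)$; this is a purely poset-theoretic datum, and reversal sends maximal chains of $P(D,x,y)$ to maximal chains of $P(D^*,y^*,x^*)$ bijectively. Combined with Step~2, this gives the required graph isomorphism. The main obstacle is therefore Step~2: checking that the ladybug matching is invariant under the duality operation, including in the new annular case~(ii), since this is the only place where a nontrivial choice entered the construction.
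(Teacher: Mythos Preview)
Your overall architecture (the order-reversing poset bijection of \cite[Lemma~2.13]{LipSar2014}, matching of $1$-cells at index~$2$, then maximal chains for incidences) is exactly what the paper does. But Step~2 contains a genuine error that would make the verification fail.

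You claim that ``the right-pair prescription used to define the ladybug matching \dots\ is preserved under the natural identification''. This is not what the lemma asserts, and it is not true. Look again at the subscript in the statement: the right-hand side is $\partial_{exp}\calM_*(D^*,y^*,x^*)$, and the paper states explicitly that $\calM_*$ denotes the \emph{opposite} of the ladybug matching. The correct compatibility is that, under duality, the right-pair matching on an index~$2$ ladybug $(E,u,v)$ corresponds to the \emph{left}-pair (opposite) matching on $(E^*,v^*,u^*)$. Concretely, two maximal chains of $P(E,u,v)$ bound an interval in $\calM(E,u,v)$ if and only if their images under $f_E$ bound an interval in $\calM_*(E^*,v^*,u^*)$. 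If you attempt to verify that the right pair is preserved (so that $\calM$ on $D$ matches $\calM$ on $D^*$), the check fails already for the type~(ii) annular ladybug, as one sees by comparing Figures~\ref{fig:ladybug_decorated_resolution} and~\ref{fig:dual_ladybug_decorated_resolution}.

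Once you correct Step~2 to target $\calM_*$ rather than $\calM$ on the dual side, your Steps~1 and~3 go through and coincide with the paper's argument. The leaf/co-leaf cases need no separate treatment, since in those cases there is only one interval and the two matchings agree; the content is entirely in the ladybug case, where the swap to the opposite matching is the point.
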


\begin{proof}
For any index $n$ decorated resolution configuration $(E,u,v)$ in the annulus, there is a natural isomorphism, $f_E$ from $P(E,u,v)$ to the reverse poset of $P(E^*,v^*,u^*)$. Let $(F,w)\in P(E,u,v)$ be a labeled resolution configuration in the annulus, such that $F = s_{T}(E)$ for $ \emptyset \subsetneq T \subsetneq R(E)$. Then, $f_E((F,w)) = (f_E(F),w^*)$ is given by, $f_E(F) = s_{T^*}(E^*)$ where $T^* = \{R^*_{n-i+1} : R_i\in R(E)\setminus T\}$, and $w$ and $w^*$ are labelings which are dual to each other. Also $f_{E}\big((s(E),u)\big)= \big(E^{*}, u^{*}\big)$, and $f_{E}\big((E,v)\big)= \big(s(E^{*}), v^{*}\big)$.\par
For any index $2$ decorated resolution configuration $(E,u,v)$ in the annulus, it can be seen that the vertices $[(E,v)\prec'(F_1,w_1)\prec'(s(E),u)]$ and $[(E,v)\prec'(F_2,w_2)\prec'(s(E),u)]$ bound an interval in $\calM(E,u,v)$ if and only if the vertices $[(f_E(s(E)),u^*)\prec'(f_E(F_1),w_1^*)\prec'(f_E(E),v^*)]$ and $[(f_E(s(E)),u^*)\prec'(f_E(F_2),w_2^*)\prec'(f_E(E),v^*)]$ together bound an interval in $\calM_{*}(E^*,v^*,u^*)$; see Figure \ref{fig:ladybug_decorated_resolution} and \ref{fig:dual_ladybug_decorated_resolution}.
Note that $\calM_{*}$ denotes opposite matching of the ladybug matching. \par
Here, in graphs, $\partial_{exp}\calM (D,x,y)$ and $\partial_{exp}\calM_*(D^*,y^*,x^*)$, the vertices correspond to the maximal chains in $P(D,x,y)$ and $P(D^*,y^*,x^*)$, respectively, and the edges correspond to bounding an interval as discussed above in the index $2$ resolution configurations. Therefore, $f_D$ defined above induces a bijection between the vertices and the corresponding edges. See the Figure \ref{fig:dual_ladybug_decorated_resolution} below for the explicit map $f_D$ when $D$ is a ladybug configuration.

\end{proof}

\begin{center}
\begin{figure}[htp]
    \centering
    \input{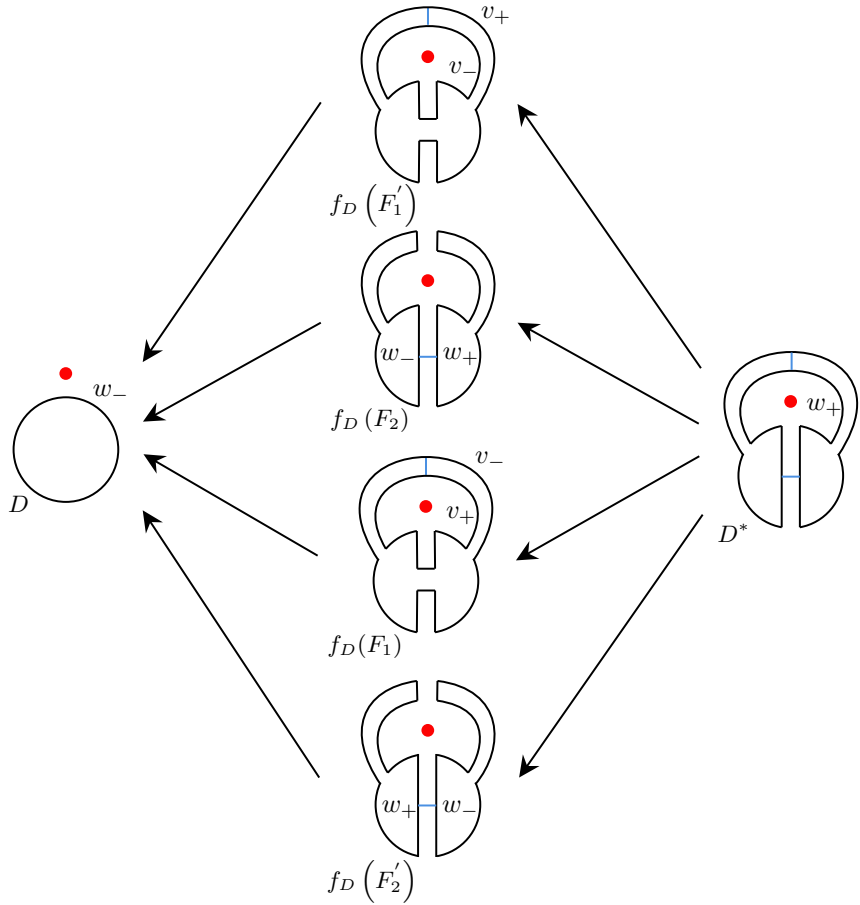}
    \caption{The poset $P(D^*,w_-,w_+)$ for the ladybug configuration (ii) and the bijection $f_D$ from $P(D,w_-,w_+)$ to $P(D^*,w_-,w_+)$.}
    \label{fig:dual_ladybug_decorated_resolution}
\end{figure}
\end{center}
\vspace{-2cm}
\begin{lemma}
    Suppose $D$ has a leaf. Then $\partial_{exp}\calM(D,x,y)$ is either a $6$-cycle or a disjoint union of two $6$-cycles.
\end{lemma}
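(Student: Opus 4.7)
The plan is to view $\partial_{exp}\calM(D,x,y)$ as a cover of the hexagon $\partial\calM_{\scrC_C(3)}(\overline{1},\overline{0})$ via $\calF$ and show the cover is a disjoint union of one or two copies of the hexagon. By (RM-2) at index $3$, the vertices of $\partial_{exp}\calM(D,x,y)$ are indexed by maximal chains $(D,y)\prec'(E_1,z_1)\prec'(E_2,z_2)\prec'(s(D),x)$ in $P(D,x,y)$, while edges come from the already-constructed $1$-dimensional moduli spaces of intermediate index $2$ decorated sub-configurations (with the ladybug matching where applicable). By (RM-3) applied inductively, $\calF$ restricts to a covering map of closed $1$-manifolds from $\partial_{exp}\calM(D,x,y)$ onto the hexagon, so each connected component is a cycle whose length is a positive multiple of $6$.

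The key step uses the leaf $R$ to pin down the degree of the cover. Let $Z$ be the leaf circle (touched only by $R$), and let $Z'$ be the other endpoint circle of $R$. Because no other arc touches $Z$, in every maximal chain the label on $Z$ just before the $R$-surgery is forced to be $y(Z)$, and the $R$-surgery is governed locally by the rules (2a)--(3c): it contributes either exactly one or exactly two consistent choices of intermediate label, and this contribution is independent of where $R$ sits in the surgery order. The surgeries along the other two arcs $R_1,R_2$ correspond to maximal chains in the index $2$ decorated sub-configuration obtained by deleting $R$ and $Z$ (with induced boundary labels), whose intermediate label counts are already classified in the $1$-dimensional moduli space construction. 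Multiplying the two contributions shows that for each of the $6$ orderings of the arcs the fiber of $\calF$ has the same cardinality $k\in\{1,2\}$, so $\calF$ is a constant-degree $k$ cover.

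Finally, one must rule out that a component of $\partial_{exp}\calM(D,x,y)$ is a longer cycle (such as a $12$-cycle) covering the hexagon nontrivially; equivalently, one needs trivial monodromy of $\calF$ around the hexagon. The leaf structure forces this: traversing the six edges around the hexagon corresponds to successively swapping the positions of adjacent arcs in the surgery order, and because the leaf surgery commutes with any other surgery without forcing a relabeling of the leaf side (the circle $Z$ never interacts with $R_1$ or $R_2$), the label on the leaf side returns to its initial value after one loop. The only potential source of nontrivial monodromy comes from the ladybug matching on the index $2$ sub-configuration $D\setminus\{R\}$, but this matching is defined in Section~5.4 of \cite{LipSar2014} precisely so that the induced cover becomes trivial on each component. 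I expect the main obstacle to be the case analysis underlying both the degree count and the monodromy check: it splits according to whether $R$ is an $m$-arc or a $\Delta$-arc, whether $Z$ and $Z'$ are trivial or non-trivial, and which of the labeling rules in (2a)--(3c) and (3a)--(3c) are active at the leaf, each case contributing a degree in $\{1,2\}$ and hence one or two $6$-cycle components.
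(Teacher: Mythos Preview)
Your framing via the covering map $\calF_\partial$ onto the hexagon is exactly the setup the paper establishes immediately before this lemma, and your degree count (fiber size $1$ or $2$) corresponds to the product decomposition $P(D,x,y)\cong P(D',x',y')\times\{0,1\}$ that the paper invokes via \cite[Lemma~2.14]{LipSar2014}. Two small corrections: the leaf arc $R$ is always an $m$-arc, so your proposed split on $m$-arc versus $\Delta$-arc for $R$ is spurious; and the leaf surgery itself contributes exactly one labeling choice, not ``one or two''---the factor of $2$ in the degree comes entirely from whether $D'$ is a ladybug.

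The genuine gap is your monodromy step. You assert that the ladybug matching ``is defined precisely so that the induced cover becomes trivial on each component,'' but that is the statement to be proved, not a fact about the index-$2$ definition. When $D'$ is a ladybug, two distinct index-$2$ ladybug faces appear in $\partial_{exp}\calM(D,x,y)$: the face from $(b,0)$ to $(d,0)$ (the ladybug $D'$ before leaf surgery) and the face from $(b,1)$ to $(d,1)$ (the ladybug after leaf surgery, a geometrically different configuration when the leaf arc touches the ladybug circle). For the cover to split as two $6$-cycles rather than a single $12$-cycle, the right-pair matchings on these two faces must pair the \emph{same} intermediate objects $c_i$. This compatibility is not forced by the definition of the matching at index $2$; the paper verifies it by enumerating all index-$3$ leaf-plus-ladybug configurations in the annulus (Figure~\ref{fig:ladybug_leaf_config}), separating the cases where the leaf arc meets the ladybug circle from those where it is disjoint, and then invoking the argument of \cite[Lemma~5.14]{LipSar2014} to check that the right-pair convention yields identical pairings $u_i\leftrightarrow u_j$ and $w_i\leftrightarrow w_j$ at both levels. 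Your proposal names the obstacle correctly but does not carry out this verification, and the sentence quoted above would, if taken at face value, short-circuit the actual content of the lemma.
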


\begin{proof}
Let $(D,x,y)$ be an index $3$ basic decorated resolution configuration with $Z\in Z(D)$ the leaf and $R\in R(D)$ the surgery arc with one boundary component on $Z$. By \cite[Lemma 2.14]{LipSar2014}, there is a naturally associated index $2$ decorated configuration $(D',x',y')$ such that $P(D,x,y) = P(D',x',y')\times \{0,1\}$. 

If $D'$ is not a ladybug configuration then $P(D,x,y) = \{0,1\}^3$ and $\partial_{exp}\calM(D,x,y)$ is a $6$-cycle.

Now suppose $D'$ is a ladybug configuration, then up to isotopy in the annulus and vertical reflection across the page, Figure \ref{fig:ladybug_leaf_config} gives all the possible index 3 configurations with a leaf and a ladybug. Since $D'$ is a ladybug configuration, the circle is always labeled $w_+$, regardless of where the leaf is placed or attached.

Let the four maximal chains in $P(D',x',y')$ are $e_{i}=[b\prec' c_{i}\prec' d]$ for $1\leq i \leq 4$ and the ladybug matching be $e_{1}\xleftrightarrow{} e_{2}$ and $e_{3}\xleftrightarrow{} e_{4}$. Then, it can be seen that the twelve vertices in $\partial_{exp}\mathcal{M}(D,x,y)$ i.e. the maximal chains in $P(D,x,y)$ are the following.
    \begin{align*}
        u_{i} &= \big[(b,0)\prec' (c_{i},0) \prec' (d,0) \prec' (d,1)\big],\\
        v_{i} &= \big[(b,0)\prec' (c_{i},0) \prec' (c_{i},1) \prec' (d,1)\big],\\
        w_{i} &= \big[(b,0)\prec' (b,1) \prec' (c_{i},1) \prec' (d,1)\big]
    \end{align*}
for $1\leq i \leq 4$, see Figure \ref{fig:ladybug_leaf}.
It can be seen that the following edges exist independently of our choice of ladybug matching:
\begin{center}
\begin{tabular}{c c c c}
    $v_1-u_1$ & $v_1-w_1$ & $v_2-u_2$ & $v_2-w_2$\\
    $v_3-u_3$ & $v_3-w_3$ & $v_4-u_4$ & $v_4-w_4$ 
\end{tabular}   
\end{center}

\begin{center}
\begin{figure}[htp]
    \centering
    \input{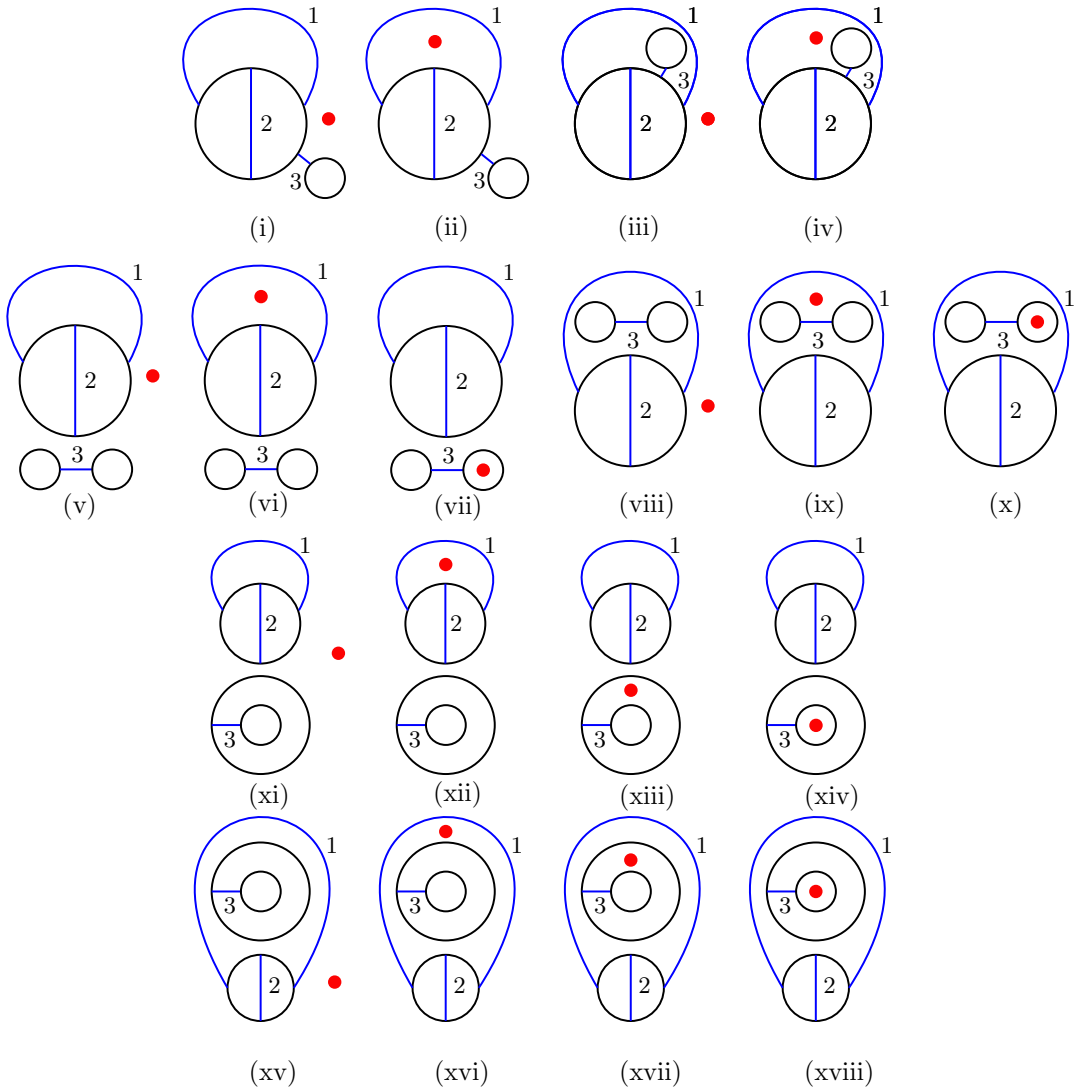}
    \caption{Up to isotopy in $A$ and reordering of the arcs, these are the only basic index 3 resolution configurations with leaves and ladybugs.}
    \label{fig:ladybug_leaf_config}
\end{figure}
\end{center}

\begin{center}
\begin{figure}[htp]
    \centering
    \input{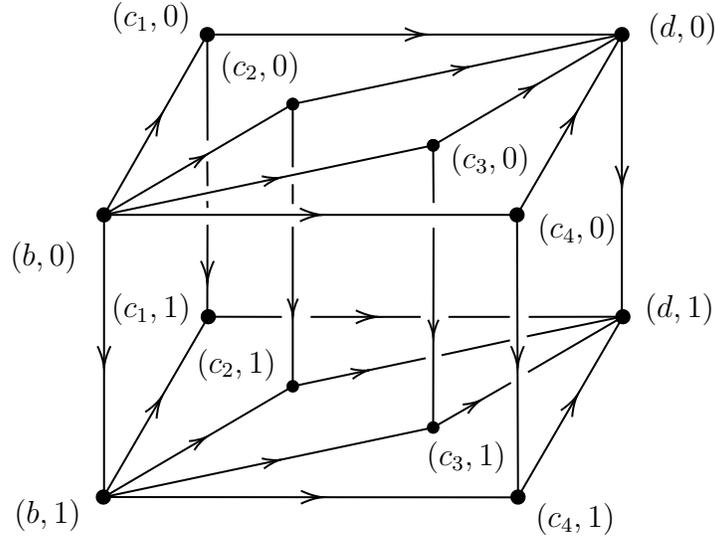}
    \caption{The poset $P(D,x,y) = P(D',x',y')\times \{0,1\}$ for $D$ a configuration with both leaf and ladybug.}
    \label{fig:ladybug_leaf}
\end{figure}
\end{center}

  The rest of the arguments follow from \cite[Lemma 5.14]{LipSar2014}, it suffices to verify for all configurations. For configurations (i) - (iv) in Figure \ref{fig:ladybug_leaf_config} one of the boundary components of the surgery arc, $R$ lies on the circle in the ladybug configuration. Since, $R$ is an $m$-arc, the label on $Z$ is always $w_+$. Consequently, if $e_{i}\xleftrightarrow{} e_{j}$ is the ladybug matching then, this gives new edges joining $u_i$ to $u_j$ and $w_i$ to $w_j$.
 For the rest of the configurations in Figure \ref{fig:ladybug_leaf_config}, since $R$ is completely disjoint from $D'$, the same result follows. Therefore, the following edges come from the ladybug matching:
 \begin{center}
\begin{tabular}{c c c c}
    $u_1-u_2$ & $u_3-u_4$ & $w_1-w_2$ & $w_3-w_4$
\end{tabular}   
\end{center}
 So, $\partial_{exp}\calM(D,x,y)$ is a disjoint union of two $6$-cycles and the components are 
\begin{center}
\begin{tabular}{c c c}
    $v_1-u_1-u_2-v_2-w_2-w_1-v_1$ & \text{and} & $v_3-u_3-u_4-v_4-w_4-w_3-v_3.$
\end{tabular}   
\end{center}
\end{proof}

\begin{remark}
Notice that there are more resolution configurations with both the leaf and ladybug, as shown in Figure \ref{fig:non_ladybug_leaf_config}. However, we choose not to consider these as any choice of label would not result in an index 3 decorated resolution configuration. The homotopical gradings would force the label after the surgery to be $0$.
\begin{center}
\begin{figure}[htp]
    \centering
    \vspace{-0.5cm}

\tikzset{every picture/.style={line width=0.75pt}} 

\begin{tikzpicture}[x=0.75pt,y=0.75pt,yscale=-1,xscale=1]

\draw   (263,127.55) .. controls (263,110.98) and (276.43,97.55) .. (293,97.55) .. controls (309.57,97.55) and (323,110.98) .. (323,127.55) .. controls (323,144.12) and (309.57,157.55) .. (293,157.55) .. controls (276.43,157.55) and (263,144.12) .. (263,127.55) -- cycle ;
\draw [color=blue  ,draw opacity=1 ]   (293,97.55) -- (293,157.55) ;
\draw [color=blue  ,draw opacity=1 ]   (265,117.55) .. controls (221,49.55) and (361,46.55) .. (322,117.55) ;
\draw  [color={rgb, 255:red, 252; green, 3; blue, 3 }  ,draw opacity=1 ][fill={rgb, 255:red, 252; green, 3; blue, 3 }  ,fill opacity=1 ][line width=0.75]  (336.08,157) .. controls (336.08,155.3) and (334.7,153.92) .. (333,153.92) .. controls (331.3,153.92) and (329.92,155.3) .. (329.92,157) .. controls (329.92,158.7) and (331.3,160.08) .. (333,160.08) .. controls (334.7,160.08) and (336.08,158.7) .. (336.08,157) -- cycle ;
\draw   (322.25,157) .. controls (322.25,151.06) and (327.06,146.25) .. (333,146.25) .. controls (338.94,146.25) and (343.75,151.06) .. (343.75,157) .. controls (343.75,162.94) and (338.94,167.75) .. (333,167.75) .. controls (327.06,167.75) and (322.25,162.94) .. (322.25,157) -- cycle ;
\draw [color=blue  ,draw opacity=1 ]   (318.5,144) -- (325,149.25) ;
\draw   (370,127.55) .. controls (370,110.98) and (383.43,97.55) .. (400,97.55) .. controls (416.57,97.55) and (430,110.98) .. (430,127.55) .. controls (430,144.12) and (416.57,157.55) .. (400,157.55) .. controls (383.43,157.55) and (370,144.12) .. (370,127.55) -- cycle ;
\draw [color=blue  ,draw opacity=1 ]   (400,97.55) -- (400,157.55) ;
\draw [color=blue  ,draw opacity=1 ]   (372,117.55) .. controls (328,49.55) and (468,46.55) .. (429,117.55) ;
\draw   (370,127.55) .. controls (370,110.98) and (383.43,97.55) .. (400,97.55) .. controls (416.57,97.55) and (430,110.98) .. (430,127.55) .. controls (430,144.12) and (416.57,157.55) .. (400,157.55) .. controls (383.43,157.55) and (370,144.12) .. (370,127.55) -- cycle ;
\draw [color=blue  ,draw opacity=1 ]   (400,97.55) -- (400,157.55) ;
\draw [color=blue  ,draw opacity=1 ]   (372,117.55) .. controls (328,49.55) and (468,46.55) .. (429,117.55) ;
\draw  [color={rgb, 255:red, 252; green, 3; blue, 3 }  ,draw opacity=1 ][fill={rgb, 255:red, 252; green, 3; blue, 3 }  ,fill opacity=1 ][line width=0.75]  (422.48,86.73) .. controls (422.48,85.03) and (421.1,83.66) .. (419.4,83.66) .. controls (417.7,83.66) and (416.32,85.03) .. (416.32,86.73) .. controls (416.32,88.43) and (417.7,89.81) .. (419.4,89.81) .. controls (421.1,89.81) and (422.48,88.43) .. (422.48,86.73) -- cycle ;
\draw   (408.65,86.73) .. controls (408.65,80.8) and (413.46,75.98) .. (419.4,75.98) .. controls (425.34,75.98) and (430.15,80.8) .. (430.15,86.73) .. controls (430.15,92.67) and (425.34,97.48) .. (419.4,97.48) .. controls (413.46,97.48) and (408.65,92.67) .. (408.65,86.73) -- cycle ;
\draw [color=blue  ,draw opacity=1 ]   (419.4,97.48) -- (416.5,102.25) ;

\draw (291.5,175.75) node [anchor=north west][inner sep=0.75pt]  [font=\footnotesize] [align=left] {(i)};
\draw (322,63) node [anchor=north west][inner sep=0.75pt]  [font=\scriptsize] [align=left] {$\displaystyle 1$};
\draw (296.5,121.5) node [anchor=north west][inner sep=0.75pt]  [font=\scriptsize] [align=left] {$\displaystyle 2$};
\draw (312.67,152.33) node [anchor=north west][inner sep=0.75pt]  [font=\scriptsize] [align=left] {$\displaystyle 3$};
\draw (429,63) node [anchor=north west][inner sep=0.75pt]  [font=\scriptsize] [align=left] {$\displaystyle 1$};
\draw (403.5,121.5) node [anchor=north west][inner sep=0.75pt]  [font=\scriptsize] [align=left] {$\displaystyle 2$};
\draw (429,63) node [anchor=north west][inner sep=0.75pt]  [font=\scriptsize] [align=left] {$\displaystyle 1$};
\draw (403.5,121.5) node [anchor=north west][inner sep=0.75pt]  [font=\scriptsize] [align=left] {$\displaystyle 2$};
\draw (424.03,98.25) node [anchor=north west][inner sep=0.75pt]  [font=\scriptsize] [align=left] {$\displaystyle 3$};
\draw (391.5,176) node [anchor=north west][inner sep=0.75pt]  [font=\footnotesize] [align=left] {(ii)};

\end{tikzpicture}
    \caption{Configurations with both leaf and ladybug but does not give an index 3 resolution configuration.}
    \label{fig:non_ladybug_leaf_config}
\end{figure}
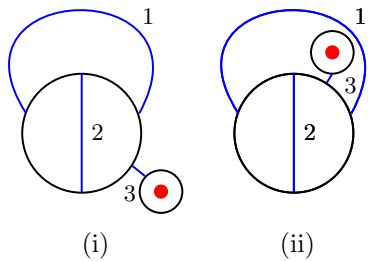
\end{center}    
\end{remark}

\begin{corollary}
Suppose $D$ is a co-leaf. Then $\partial_{exp}\calM(D,x,y)$ is a disjoint union of $6$-cycles.
\end{corollary}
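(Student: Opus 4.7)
The strategy is to reduce the co-leaf case to the previously established leaf case via the duality between $D$ and its dual $D^*$. Observe that a co-leaf of $D$ is, by definition, a leaf of $D^*$, so $(D^*,y^*,x^*)$ is an index 3 basic decorated resolution configuration in the annulus that admits a leaf. If one could apply the previous lemma directly to $(D^*,y^*,x^*)$ and then transfer the conclusion back through the isomorphism $\partial_{exp}\calM(D,x,y) \cong \partial_{exp}\calM_*(D^*,y^*,x^*)$ from the first lemma above, we would be done. The wrinkle is that the transfer lands in $\partial_{exp}\calM_*$, which uses the opposite ladybug matching.

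First I would verify that the argument of the leaf lemma is insensitive to the choice of matching. In its proof, the only step that depends on the matching is the insertion of edges between the vertices $u_i$ (and $w_i$) produced by the ladybug subconfiguration of $D'$; the matching-independent edges $v_i-u_i$ and $v_i-w_i$ (for $1\le i\le 4$) are always present. Swapping the ladybug matching $e_1\leftrightarrow e_2,\; e_3\leftrightarrow e_4$ for its opposite $e_1\leftrightarrow e_3,\; e_2\leftrightarrow e_4$ replaces the edges $u_1-u_2, u_3-u_4, w_1-w_2, w_3-w_4$ by $u_1-u_3, u_2-u_4, w_1-w_3, w_2-w_4$. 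Tracing the resulting graph yields the two disjoint 6-cycles
\[
v_1-u_1-u_3-v_3-w_3-w_1-v_1 \quad \text{and} \quad v_2-u_2-u_4-v_4-w_4-w_2-v_2,
\]
so the same conclusion — a disjoint union of 6-cycles — holds with the opposite matching.

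With that in hand, apply the matching-independent form of the leaf lemma to $(D^*,y^*,x^*)$ to conclude that $\partial_{exp}\calM_*(D^*,y^*,x^*)$ is a disjoint union of 6-cycles. Then the graph isomorphism $\partial_{exp}\calM(D,x,y) \cong \partial_{exp}\calM_*(D^*,y^*,x^*)$ from the preceding lemma transports this disjoint union of 6-cycles back to $\partial_{exp}\calM(D,x,y)$, finishing the proof. The main point to be careful about is the first step — confirming that the proof of the leaf lemma really does not depend on the specific pairing used — but this is a routine case check on the list of index 3 configurations in Figure \ref{fig:ladybug_leaf_config}, since in each case the surgery arc $R$ either meets the ladybug circle in an $m$-arc (forcing $w_+$ on $Z$) or is disjoint from $D'$, so the new edges are generated uniformly from the chosen matching.
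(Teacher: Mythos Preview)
Your proposal is correct and follows essentially the same approach as the paper: reduce to the leaf case via duality, using the graph isomorphism $\partial_{exp}\calM(D,x,y)\cong\partial_{exp}\calM_*(D^*,y^*,x^*)$ together with the leaf lemma applied to $D^*$. You are in fact more careful than the paper on the one point where care is needed --- the paper's two-line proof silently assumes the leaf lemma holds for $\calM_*$, whereas you explicitly verify (by the easy edge trace) that the argument of the leaf lemma is insensitive to which ladybug matching is used.
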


\begin{proof}
The graphs, $\partial_{exp}\calM(D,x,y)$ and $\partial_{exp}\calM_*(D^*,y^*,x^*)$ have the same number of vertices and same parity of number of cycles. So, when $D$ has no ladybug configurations $\partial_{exp}\calM(D,x,y)$ is a $6$-cycle and when $D$ has ladybug configuration $\partial_{exp}\calM(D,x,y)$ is a disjoint union of two $6$-cycles.
\end{proof}

\begin{center}
\begin{figure}[htp]
    \centering
    \input{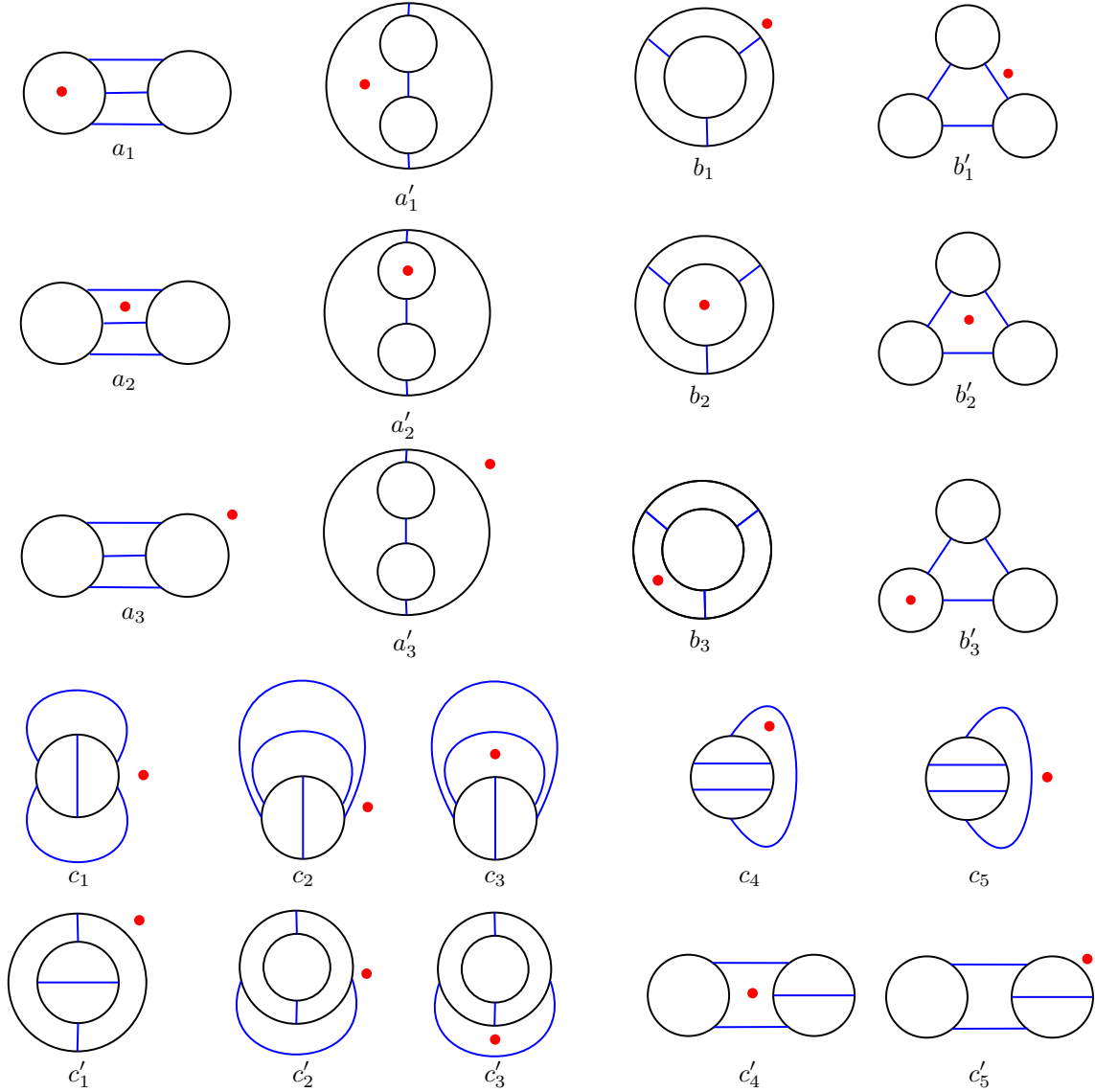}
    \caption{up to isotopy in $A$ and reordering of arcs, these are the only basic index $3$ resolution configurations without leaves or co-leaves. The configurations  $a'_i$, $b'_i$ and $c'_i$ are dual to the configurations  $a_i$, $b_i$, and $c_i$ respectively.} 
    \label{fig:configurations}
\end{figure}   
\end{center}

\begin{lemma}
    Up to isotopy in $A$ and reordering of arcs, the $22$  basic resolution configurations of Figure \ref{fig:configurations} are the only ones with no leaves or co-leaves.
\end{lemma}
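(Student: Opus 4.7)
The approach is to bootstrap from the classification of basic index 3 resolution configurations without leaves or co-leaves in $S^2$, which was carried out by Lipshitz and Sarkar in \cite[Section 5.6]{LipSar2014}, and then incorporate the additional data of the braid axis that distinguishes the annular setting. More precisely, forgetting the annular structure, any index 3 resolution configuration $D$ in $A$ is an index 3 resolution configuration $D_{S^2}$ in $S^2$ equipped with a marked point (representing the core of the annulus, i.e.\ the braid axis) lying in a connected component of $S^2 \setminus (Z(D_{S^2})\cup R(D_{S^2}))$. Two annular configurations are isotopic in $A$ precisely when their underlying $S^2$ configurations are related by a self-homeomorphism of $S^2$ that carries one marked point to the other.

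First I would invoke the $S^2$ classification to obtain a finite list of basic index 3 configurations $D_{S^2}$ in $S^2$ with no leaves or co-leaves, up to isotopy and reordering of arcs. For each such $D_{S^2}$, I would enumerate the complementary regions of $D_{S^2}$ in $S^2$, then quotient by the symmetry group of $D_{S^2}$, namely the subgroup of the mapping class group of $S^2$ that preserves $D_{S^2}$ as an unlabeled configuration, up to reordering arcs. Each orbit of complementary regions yields precisely one annular configuration, since placing the marked braid axis anywhere in a single orbit produces configurations related by an annular isotopy.

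Next I would organize the resulting list into three families according to the underlying $S^2$ pattern, matching Figure \ref{fig:configurations}: configurations $a_1,a_2,a_3$ together with their duals $a_1',a_2',a_3'$; configurations $b_1,b_2,b_3$ with duals $b_1',b_2',b_3'$; and configurations $c_1,\ldots,c_5$ with duals $c_1',\ldots,c_5'$. Summing yields $6+6+10 = 22$, and I would check that the figure exhausts every orbit of braid axis placements for each $S^2$ type. The duality labels are automatically consistent by the lemma relating $\partial_{exp}\calM(D,x,y)$ and $\partial_{exp}\calM_*(D^*,y^*,x^*)$: the map $f_D$ on marked points carries $a_i$ to $a_i'$, and similarly for the other families.

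The main obstacle I anticipate is correctly accounting for the symmetries of each $S^2$ configuration, since a configuration with a nontrivial stabilizer (for instance, a theta-like graph with a rotational symmetry permuting the three arcs) identifies marked point placements that would otherwise give distinct annular configurations; missing such an identification would overcount. A secondary subtlety is verifying that no listed annular configuration acquires a leaf or co-leaf merely because of the placement of the braid axis, which could happen if the axis forces a circle to bound a disk in $A$ that effectively isolates it; ruling this out requires inspecting each candidate configuration individually.
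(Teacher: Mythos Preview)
The paper states this lemma without proof: it simply asserts the classification and refers to Figure~\ref{fig:configurations}. So there is no argument in the paper to compare against, and your proposal is effectively supplying the missing justification.

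Your approach is sound. Reducing to the $S^2$ classification from \cite[Section~5.6]{LipSar2014} and then enumerating orbits of complementary regions (where the braid axis can sit) under the symmetry group of each $S^2$ configuration is exactly the right idea, and the arithmetic $6+6+10=22$ matches the figure. The duality pairing is also correctly identified.

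One point of confusion, though it does not damage the argument: your ``secondary subtlety'' is a non-issue. The notions of leaf and co-leaf are purely combinatorial---a leaf is a circle meeting exactly one arc endpoint, and a co-leaf is the dual notion---so they are insensitive to where the braid axis is placed or whether a given circle is trivial in $A$. Thus a configuration with no leaves or co-leaves in $S^2$ cannot acquire one after marking a point, and you can drop that check entirely. The only real work is the symmetry analysis you flagged as the main obstacle.
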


\begin{lemma}
    If $D$ is any of the basic resolution configurations depicted in Figure \ref{fig:configurations}, then $\partial_{exp}\calM(D,x,y)$ is a disjoint union of $6$-cycles.
\end{lemma}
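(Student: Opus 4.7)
The plan is to verify the claim by a case-by-case analysis of the $22$ configurations in Figure \ref{fig:configurations}, exploiting the duality isomorphism established earlier to cut the work roughly in half. Since we have shown that $\partial_{exp}\calM(D,x,y)$ and $\partial_{exp}\calM_*(D^*,y^*,x^*)$ are isomorphic as graphs, and being a disjoint union of $6$-cycles is preserved under graph isomorphism, we can pair $a_i$ with $a'_i$, $b_i$ with $b'_i$, and $c_i$ with $c'_i$, and analyze only one representative of each pair.

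For each representative $(D,x,y)$, the vertices of $\partial_{exp}\calM(D,x,y)$ are the maximal chains in the poset $P(D,x,y)$, and its edges are determined by the $1$-dimensional moduli spaces $\calM(D',x',y')$ attached to its index-$2$ sub-configurations. The first step for each $(D,x,y)$ is to enumerate the maximal chains, together with the three middle index-$2$ sub-configurations and the two index-$2$ sub-configurations at the top and bottom of the chain. The decisive question is then which of these index-$2$ slots is a ladybug configuration in our annular sense, because at those slots the induced edge identifications are prescribed by the ladybug matching rather than being canonical. An important simplification in the annular setting is that many of the configurations in Figure \ref{fig:configurations} involve non-trivial circles whose index-$2$ sub-configurations fall into the ``excluded'' pattern of Figure \ref{fig:non_ladybug} and therefore cannot be ladybugs, which substantially reduces the number of slots where a matching choice has to be invoked.

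For configurations with no ladybug sub-configurations, the argument parallels that of \cite[Section 5]{LipSar2014}: each index-$2$ contribution is a single interval, and tracing the identifications through the six maximal chains produces a single $6$-cycle. For configurations containing one or more ladybug sub-configurations, I would proceed exactly as in the preceding ``leaf plus ladybug'' lemma---labelling the $12$ maximal chains explicitly, recording the canonical edges forced by the non-ladybug index-$2$ slots, and then appending the edges dictated by the right-pair rule at each ladybug slot. A direct inspection, configuration by configuration, should then confirm that the resulting graph is always a disjoint union of $6$-cycles: one cycle when there are no ladybug slots, and two cycles when there is at least one.

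The main obstacle is combinatorial bookkeeping. For each of the eleven representatives one has to determine which intermediate labelings are forced by the homotopical and quantum grading constraints, check that the right-pair prescription yields a coherent matching whenever several ladybug slots coexist within the same $(D,x,y)$, and verify that the $12$ (or $6$) maximal chains close up into cycles of length exactly $6$. The delicate point, absent from the $S^2$ case, is the configurations containing multiple non-trivial circles: one must confirm that the intrinsic right-pair rule never forces two incompatible matchings on different index-$2$ faces of the same index-$3$ configuration, since such an incompatibility would obstruct the construction of $\calM(D,x,y)$ as a $2$-manifold via \cite[Proposition 5.2]{LipSar2014}.
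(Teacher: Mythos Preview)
Your strategy is essentially the paper's: reduce by duality to eleven representatives, sort them by whether ladybug sub-configurations appear, and verify case by case. But your proposal stops at the strategy and never executes the verifications, and the paper's proof shows that the substance lies precisely in that execution.

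The paper partitions the eleven representatives more sharply than you do. For $a_1,a_2,a_3,b_1,b_2,b_3$ it observes that, once $x$ and $y$ are fixed, the intermediate labelings in every maximal chain are \emph{uniquely} determined (no ladybug appears at any index-$2$ face), so $\partial_{exp}\calM(D,x,y)$ has exactly six vertices and is automatically a single $6$-cycle. For $c_1,c_2,c_5$ the only ladybug present is of type~(i) and the axis does not interact with the configuration, so the $S^2$ argument of \cite[Lemma~5.17]{LipSar2014} applies verbatim after replacing $x_{\pm}$ by $w_{\pm}$; this gives two $6$-cycles with no new work. Only $c_3$ and $c_4$ genuinely involve the new type~(ii) ladybug matching, and for these the paper writes out the cube of resolutions, lists all twelve maximal chains $v_1,\dots,v_{12}$, identifies the two ladybug faces and their right-pair matchings, and checks by hand that the edges close up into two $6$-cycles.

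Your stated ``delicate point''---that the right-pair rule might impose incompatible matchings on different index-$2$ faces of the same index-$3$ configuration---is exactly what those two explicit computations settle, and it cannot be waved away. Without carrying out at least the $c_3$ and $c_4$ checks, your proposal is a correct outline but not yet a proof.
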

\begin{proof}
For $D$ a configuration of the type $a_1$, $a_2$, $a_3$, $b_1$, $b_2$ and $b_3$, in any maximal chain 
\begin{align*}
    c = [(D,y)\prec' (Z_1,z_1)\prec' (Z_2,z_2)\prec' (s(D),x)],
\end{align*}
for a given choice of $D$, $x$ and $y$ the labeled resolution configurations $(Z_1,z_1)$ and $(Z_2,z_2)$ in between obtained after surgery on $(D,y)$ are uniquely determined. This is true irrespective of where the axis is, so $\partial_{exp}\calM(D,x,y)$ is a $6$-cycle.

When $D$ is the configuration of the type $c_1$, $c_2$ and $c_5$, it can be noticed that there is a ladybug configuration of type (i) within these configurations and in any maximal chain 
\begin{align*}
    c = [(D,y)\prec' (Z_1,z_1)\prec' (Z_2,z_2)\prec' (s(D),x)],
\end{align*}
grading reasons forces $y=w_{+}$ and $x=w_{-}w_{-}$. The axis does not interact with the configurations, so $z_i$'s are some combination of $w_{+}$ and $w_{-}$. Therefore the same proof for \cite[Lemma 5.17]{LipSar2014} works, by replacing the labels $x_{+}$ and $x_{-}$ with $w_{+}$ and $w_{-}$ respectively. Hence, $\partial_{exp}\calM(D,x,y)$ is a disjoint union of two $6$-cycles.  

When $D$ is the configuration of the type $c_3$ and $c_4$, it can be noticed that there  is a ladybug configuration of type (ii) within these configurations, so we will use ladybug matching described in the previous section. Let us first label the arcs $R_1$, $R_2$, and $R_3$ in these configurations, see Figure \ref{fig:arc_labels}.

\vspace{-1cm}
\begin{center}
    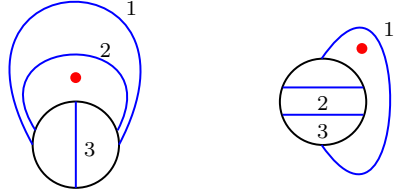
\begin{figure}[htp]
        \centering
        \tikzset{every picture/.style={line width=0.75pt}} 

\begin{tikzpicture}[x=0.75pt,y=0.75pt,yscale=-1,xscale=1]

\draw   (43.55,97.58) .. controls (43.55,85.55) and (53.3,75.79) .. (65.33,75.79) .. controls (77.36,75.79) and (87.11,85.55) .. (87.11,97.58) .. controls (87.11,109.61) and (77.36,119.36) .. (65.33,119.36) .. controls (53.3,119.36) and (43.55,109.61) .. (43.55,97.58) -- cycle ;
\draw [color=blue  ,draw opacity=1 ]   (65.33,75.79) -- (65.33,119.36) ;
\draw [color=blue  ,draw opacity=1 ]   (45,90.32) .. controls (13.05,40.94) and (113.98,37.67) .. (86.39,90.32) ;
\draw [color=blue  ,draw opacity=1 ]   (43.55,97.58) .. controls (-9.46,2.09) and (137.94,0.64) .. (87.11,97.58) ;
\draw  [color={rgb, 255:red, 252; green, 3; blue, 3 }  ,draw opacity=1 ][fill={rgb, 255:red, 252; green, 3; blue, 3 }  ,fill opacity=1 ][line width=0.75]  (67.51,63.72) .. controls (67.51,62.49) and (66.51,61.49) .. (65.27,61.49) .. controls (64.04,61.49) and (63.04,62.49) .. (63.04,63.72) .. controls (63.04,64.95) and (64.04,65.95) .. (65.27,65.95) .. controls (66.51,65.95) and (67.51,64.95) .. (67.51,63.72) -- cycle ;
\draw   (168.24,75.94) .. controls (168.24,63.91) and (177.99,54.16) .. (190.02,54.16) .. controls (202.05,54.16) and (211.8,63.91) .. (211.8,75.94) .. controls (211.8,87.97) and (202.05,97.72) .. (190.02,97.72) .. controls (177.99,97.72) and (168.24,87.97) .. (168.24,75.94) -- cycle ;
\draw [color=blue  ,draw opacity=1 ]   (169.69,68.68) -- (210.35,68.68) ;
\draw [color=blue  ,draw opacity=1 ]   (169.69,82.47) -- (210.35,82.47) ;
\draw [color=blue  ,draw opacity=1 ]   (189.29,54.16) .. controls (235.76,-13.37) and (235.04,163.07) .. (189.29,97.72) ;
\draw  [color={rgb, 255:red, 252; green, 3; blue, 3 }  ,draw opacity=1 ][fill={rgb, 255:red, 252; green, 3; blue, 3 }  ,fill opacity=1 ][line width=0.75]  (211.86,49.02) .. controls (211.86,47.78) and (210.86,46.78) .. (209.62,46.78) .. controls (208.39,46.78) and (207.39,47.78) .. (207.39,49.02) .. controls (207.39,50.25) and (208.39,51.25) .. (209.62,51.25) .. controls (210.86,51.25) and (211.86,50.25) .. (211.86,49.02) -- cycle ;

\draw (89,23.5) node [anchor=north west][inner sep=0.75pt]  [font=\tiny] [align=left] {$\displaystyle 1$};
\draw (219,34.5) node [anchor=north west][inner sep=0.75pt]  [font=\tiny] [align=left] {$\displaystyle 1$};
\draw (76,45) node [anchor=north west][inner sep=0.75pt]  [font=\tiny] [align=left] {2};
\draw (68,95) node [anchor=north west][inner sep=0.75pt]  [font=\tiny] [align=left] {3};
\draw (122,152) node [anchor=north west][inner sep=0.75pt]   [align=left] {};
\draw (185.5,71.5) node [anchor=north west][inner sep=0.75pt]  [font=\tiny] [align=left] {2};
\draw (185.5,86) node [anchor=north west][inner sep=0.75pt]  [font=\tiny] [align=left] {3};

\end{tikzpicture}
        \vspace{-1.2 cm}
        \caption{Configurations $c_3$ and $c_4$ with arcs labelled $R_1$, $R_2$ and $R_3$.}
        \label{fig:arc_labels}
    \end{figure}
\end{center}

\vspace{-0.8cm}
For the configuration $c_3$, the cube of resolution is shown below in Figure \ref{fig:b3_config}. 
\begin{figure}[htp]
    \centering
    \input{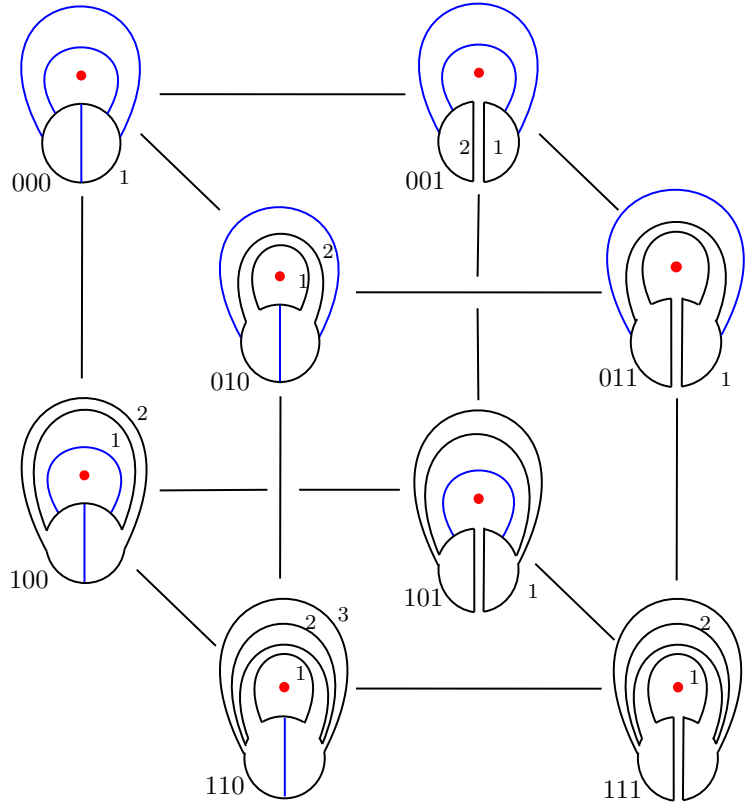}
    \caption{The cube of resolutions for configuration $c_3$.}
    \label{fig:b3_config}
\end{figure}

The vertices in $\partial_{exp}\calM(D,x,y)$ are:
\begin{align*}
    v_1 &= [(D,w_{+})\prec' (s_{\{R_1\}}(D),v_{+}v_{-}) \prec'(s_{\{R_1,R_2\}}(D),v_{+}w_{-}v_{-}) \prec' (s(D),w_{-}w_{-})]\\
    v_2 &= [(D,w_{+})\prec' (s_{\{R_1\}}(D),v_{+}v_{-}) \prec'(s_{\{R_1,R_3\}}(D),w_{-}) \prec' (s(D),w_{-}w_{-})]\\
    v_3 &= [(D,w_{+})\prec' (s_{\{R_1\}}(D),v_{-}v_{+}) \prec'(s_{\{R_1,R_2\}}(D),v_{-}w_{-}v_{-}) \prec' (s(D),w_{-}w_{-})]\\
    v_4 &= [(D,w_{+})\prec' (s_{\{R_1\}}(D),v_{-}v_{+}) \prec'(s_{\{R_1,R_3\}}(D),w_{-}) \prec' (s(D),w_{-}w_{-})]\\
    v_5 &= [(D,w_{+})\prec' (s_{\{R_2\}}(D),v_{+}v_{-}) \prec'(s_{\{R_1,R_2\}}(D),v_{+}w_{-}v_{-}) \prec' (s(D),w_{-}w_{-})]\\
    v_6 &= [(D,w_{+})\prec' (s_{\{R_2\}}(D),v_{+}v_{-}) \prec'(s_{\{R_2,R_3\}}(D),w_{-}) \prec' (s(D),w_{-}w_{-})]\\
    v_7 &= [(D,w_{+})\prec' (s_{\{R_2\}}(D),v_{-}v_{+}) \prec'(s_{\{R_1,R_2\}}(D),v_{-}w_{-}v_{-}) \prec' (s(D),w_{-}w_{-})]\\
    v_8 &= [(D,w_{+})\prec' (s_{\{R_2\}}(D),v_{-}v_{+}) \prec'(s_{\{R_2,R_3\}}(D),w_{-}) \prec' (s(D),w_{-}w_{-})]\\
    v_9 &= [(D,w_{+})\prec' (s_{\{R_3\}}(D),w_{+}w_{-}) \prec'(s_{\{R_1,R_3\}}(D),w_{-}) \prec' (s(D),w_{-}w_{-})]\\
    v_{10} &= [(D,w_{+})\prec' (s_{\{R_3\}}(D),w_{+}w_{-}) \prec'(s_{\{R_2,R_3\}}(D),w_{-}) \prec' (s(D),w_{-}w_{-})]\\
    v_{11} &= [(D,w_{+})\prec' (s_{\{R_3\}}(D),w_{-}w_{+}) \prec'(s_{\{R_1,R_3\}}(D),w_{-}) \prec' (s(D),w_{-}w_{-})]\\
    v_{12} &= [(D,w_{+})\prec' (s_{\{R_3\}}(D),w_{-}w_{+}) \prec'(s_{\{R_2,R_3\}}(D),w_{-}) \prec' (s(D),w_{-}w_{-})]
\end{align*}

Lets denote the faces of the cube by a $4$-tuple of vertices $i\in \{0,1\}^3$ and the $n$-th circle in the $i$-th vertex by $Z_{n,i}$. The faces $(000, 010, 011, 001)$ and $(000, 100, 101, 001)$ come from ladybug configurations. The ladybug matchings are given by,

\begin{center}
\begin{tabular}{c | c}
    Faces & Matchings \\
    \hline
     & \\
    $\multirow{2}{*}{(000, 010, 011, 001)}$ & $Z_{1,010} \longleftrightarrow Z_{1,001}$,\\ & $Z_{2,010} \longleftrightarrow Z_{2,001}$ \\
     & \\
    \hline
     & \\
    $\multirow{2}{*}{(000, 100, 101, 001)}$ & $Z_{1,100} \longleftrightarrow Z_{1,001}$,\\
     & $Z_{2,100} \longleftrightarrow Z_{2,001}$\\
     & 
\end{tabular}
\end{center}

The following edges are not part of the ladybug configurations, so they exist independently of the ladybug matching,

\begin{center}
\begin{tabular}{c c c c}
    $v_1-v_2$ & $v_3-v_4$ & $v_5-v_6$ & $v_7-v_8$\\
    $v_9-v_{10}$ & $v_{11}-v_{12}$ & $v_1-v_5$ & $v_3-v_7$ 
\end{tabular}   
\end{center}

The ladybug matchings gives the $4$-other edges,

\begin{center}
\begin{tabular}{c c c c}
    $v_2-v_9$ & $v_4-v_{11}$ & $v_6-v_{10}$ & $v_8-v_{12}$
\end{tabular}   
\end{center}

So, $\partial_{exp}\calM(D,x,y)$ is a disjoint union of two $6$-cycles and the components of  are,

\begin{center}
\begin{tabular}{c c c}
    $v_1-v_2-v_9-v_{10}-v_6-v_5-v_1$ & \text{and} & $v_3-v_4-v_{11}-v_{12}-v_8-v_7-v_3.$
\end{tabular}   
\end{center}

For the configuration $c_4$, the cube of resolution is shown below in Figure \ref{fig:b4_config}. 

\begin{figure}[htp]
    \centering
    \input{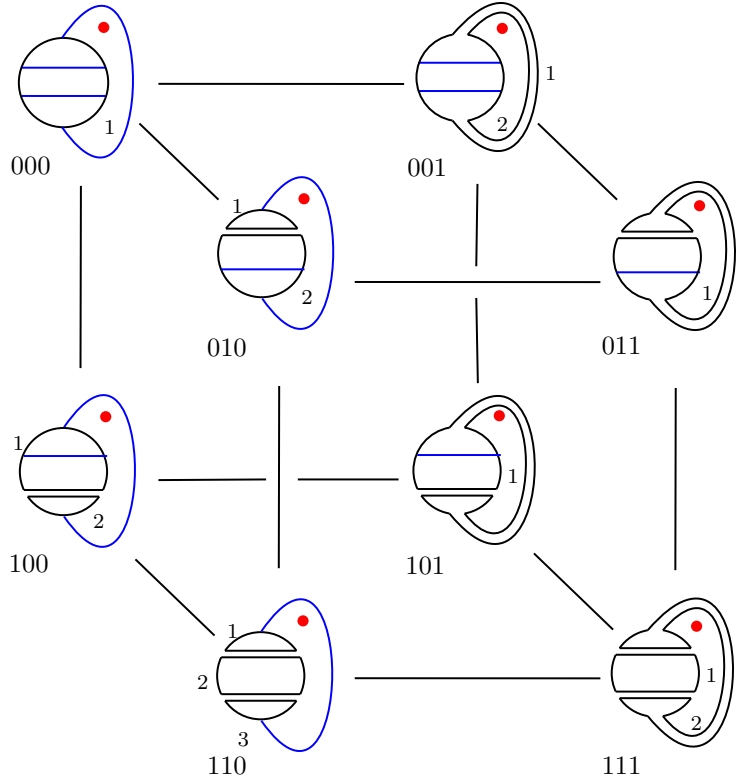}
    \vspace{-1.5cm}
    \caption{The cube of resolutions for configuration $c_4$.}
    \label{fig:b4_config}
\end{figure}

The vertices in $\partial_{exp}\calM(D,x,y)$ are:
\begin{align*}
    v_1 &= [(D,w_{+})\prec' (s_{\{R_1\}}(D),w_{+}w_{-}) \prec'(s_{\{R_1,R_2\}}(D),w_{+}w_{-}w_{-}) \prec' (s(D),w_{-}w_{-})]\\
    v_2 &= [(D,w_{+})\prec' (s_{\{R_1\}}(D),w_{+}w_{-}) \prec'(s_{\{R_1,R_3\}}(D),w_{-}) \prec' (s(D),w_{-}w_{-})]\\
    v_3 &= [(D,w_{+})\prec' (s_{\{R_1\}}(D),w_{-}w_{+}) \prec'(s_{\{R_1,R_2\}}(D),w_{-}w_{-}w_{+}) \prec' (s(D),w_{-}w_{-})]\\
    v_4 &= [(D,w_{+})\prec' (s_{\{R_1\}}(D),w_{-}w_{+}) \prec'(s_{\{R_1,R_3\}}(D),w_{-}) \prec' (s(D),w_{-}w_{-})]\\
    v_5 &= [(D,w_{+})\prec' (s_{\{R_2\}}(D),w_{+}w_{-}) \prec'(s_{\{R_1,R_2\}}(D),w_{+}w_{-}w_{-}) \prec' (s(D),w_{-}w_{-})]\\
    v_6 &= [(D,w_{+})\prec' (s_{\{R_2\}}(D),w_{+}w_{-}) \prec'(s_{\{R_2,R_3\}}(D),w_{-}) \prec' (s(D),w_{-}w_{-})]\\
    v_7 &= [(D,w_{+})\prec' (s_{\{R_2\}}(D),w_{-}w_{+}) \prec'(s_{\{R_1,R_2\}}(D),w_{-}w_{-}w_{+}) \prec' (s(D),w_{-}w_{-})]\\
    v_8 &= [(D,w_{+})\prec' (s_{\{R_2\}}(D),w_{-}w_{+}) \prec'(s_{\{R_2,R_3\}}(D),w_{-}) \prec' (s(D),w_{-}w_{-})]\\
    v_9 &= [(D,w_{+})\prec' (s_{\{R_3\}}(D),v_{+}v_{-}) \prec'(s_{\{R_1,R_3\}}(D),w_{-}) \prec' (s(D),w_{-}w_{-})]\\
    v_{10} &= [(D,w_{+})\prec' (s_{\{R_3\}}(D),v_{+}v_{-}) \prec'(s_{\{R_2,R_3\}}(D),w_{-}) \prec' (s(D),w_{-}w_{-})]\\
    v_{11} &= [(D,w_{+})\prec' (s_{\{R_3\}}(D),v_{-}v_{+}) \prec'(s_{\{R_1,R_3\}}(D),w_{-}) \prec' (s(D),w_{-}w_{-})]\\
    v_{12} &= [(D,w_{+})\prec' (s_{\{R_3\}}(D),v_{-}v_{+}) \prec'(s_{\{R_2,R_3\}}(D),w_{-}) \prec' (s(D),w_{-}w_{-})]
\end{align*}

Here, the faces $(000, 010, 011, 001)$ and $(000, 100, 101, 001)$ come from ladybug configurations. We use the same ladybug matchings as in the above case and it can be checked that $\partial_{exp}\calM(D,x,y)$ is a disjoint union of two $6$-cycles given below.

\begin{center}
\begin{tabular}{c c c}
    $v_1-v_2-v_9-v_{10}-v_6-v_5-v_1$ & \text{and} & $v_3-v_4-v_{11}-v_{12}-v_8-v_7-v_3$
\end{tabular}   
\end{center}

The configurations  $a'_i$, $b'_i$ and $c'_i$ are dual to the configurations  $a_i$, $b_i$ and $c_i$ respectively. So, for $D$ of the type  $a'_i$, $b'_i$ and $c'_i$, the graph $\partial_{exp}\calM(D,x,y)$ is a disjoint union of $6$-cycles.
\end{proof}

\subsection{$n$-dimensional moduli spaces, $n\geq 3$}
\begin{proposition}
There exist spaces $\mathcal{M}(D,x,y)$ and maps $\mathcal{F}$ that satisfy the conditions (RM-1)--(RM-4) for resolution moduli spaces.
\end{proposition}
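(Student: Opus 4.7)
My plan is to proceed by induction on the index $n$ of the basic decorated resolution configuration $(D,x,y)$ in the annulus. The base cases $n=1$ (single point), $n=2$ (intervals, using ladybug matching in the one non-trivial case), and $n=3$ (verified through the preceding lemmas showing $\partial_{exp}\mathcal{M}(D,x,y)$ is a disjoint union of $6$-cycles) have already been carried out. For the inductive step, assume $\mathcal{M}(E,x',y')$ and the corresponding covering maps $\mathcal{F}$ have been constructed for all index-$k$ basic decorated resolution configurations with $k < n$, satisfying (RM-1)--(RM-4).

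Given an index-$n$ basic decorated resolution configuration $(D,x,y)$, I will first define $\partial_{exp}\mathcal{M}(D,x,y)$ by the formula stated in (RM-2), namely as the disjoint union, over $(E,z)\in P(D,x,y)$ with $0 < \mathrm{ind}(D\setminus E) < n$, of the products $\mathcal{M}(D\setminus E, z|, y|)\times \mathcal{M}(E\setminus s(D), x|, z|)$, glued along common faces via the associativity built into (RM-1). The compatibility of these gluings follows from the inductive composition structure, and the map $\mathcal{F}_\partial:\partial_{exp}\mathcal{M}(D,x,y)\to \partial\mathcal{M}_{\mathscr{C}_C(n)}(\overline{1},\overline{0})$ is defined coordinate-wise using the already-constructed $\mathcal{F}$ on the factors, together with the inclusion functors $\mathcal{I}_{v,\overline{0}}\times \mathcal{I}_{\overline{1},v}$ required by the diagram in (RM-1). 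That $\mathcal{F}_\partial$ is a covering map is immediate from the inductive hypothesis (RM-3) for each factor and the fact that $\partial\mathcal{M}_{\mathscr{C}_C(n)}(\overline{1},\overline{0})$ is the boundary of an $(n-1)$-dimensional permutohedron.

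To invoke \cite[Proposition 5.2]{LipSar2014} (E-3) and thereby extend $\mathcal{F}_\partial$ to a trivial covering $\mathcal{F}:\mathcal{M}(D,x,y)\to \mathcal{M}_{\mathscr{C}_C(n)}(\overline{1},\overline{0})$, I must verify that $\mathcal{F}_\partial$ is trivial on each connected component of $\partial_{exp}\mathcal{M}(D,x,y)$. Since $\partial\mathcal{M}_{\mathscr{C}_C(n)}(\overline{1},\overline{0})$ is simply-connected for $n\geq 4$ (being the boundary of a permutohedron of dimension $\geq 3$), it suffices to check triviality on the $1$-skeleton, which reduces to the already-verified index-$3$ statement that each $2$-dimensional face of $\partial_{exp}\mathcal{M}(D,x,y)$ maps to a $6$-cycle as a disjoint union of $6$-cycles. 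The connectivity analysis on each component then follows exactly as in \cite[Section 5.5]{LipSar2014}, with the annular setting only affecting which circles carry $w_\pm$ versus $v_\pm$ labels; the combinatorics of the ladybug matching on index-$2$ sub-configurations is unchanged by the presence of the braid axis, since every such sub-configuration occurring inside a basic $(D,x,y)$ necessarily has trivial bounding circle (as shown in the remark excluding non-ladybug configurations with a non-trivial circle).

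The main obstacle I anticipate is bookkeeping the compatibility of the ladybug matching choices across all index-$2$ sub-configurations appearing simultaneously as sub-faces of $\partial_{exp}\mathcal{M}(D,x,y)$: one must ensure that the matching prescribed at each ladybug is consistent with the products in (RM-1) across different composition factorizations. This is handled exactly as in \cite[Lemmas 5.14, 5.17]{LipSar2014} by observing that the ladybug matching is determined intrinsically by the right-pair structure, which is preserved under the inclusion functors $\mathcal{I}_{v,\overline{0}}$ and is unaffected by whether additional disjoint circles in $D\setminus E$ or $E\setminus s(D)$ are trivial or non-trivial. Once triviality is established on each component, applying (E-3) of \cite[Proposition 5.2]{LipSar2014} produces $\mathcal{M}(D,x,y)$ as a neat, framed $(n-1)$-manifold together with the covering map $\mathcal{F}$, and (RM-1) through (RM-4) are then immediate from the construction.
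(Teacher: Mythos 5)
Your proposal is correct and follows exactly the argument the paper intends (the paper in fact states this proposition without writing out a proof, since it is the direct analogue of the inductive construction in \cite[Proposition 5.2]{LipSar2014}): the only non-automatic case is $n=3$, handled by the preceding $6$-cycle lemmas, while for $n\geq 4$ the boundary of the permutohedron is simply connected so the covering $\calF_{\partial}$ is automatically trivial on each component and (E-3) applies. One small remark: once you invoke simple connectivity of $\partial\calM_{\scrC_C(n)}(\overline{1},\overline{0})$ for $n\geq 4$, triviality of the covering is immediate and no further check on the $1$-skeleton is needed; that extra step in your write-up is redundant rather than wrong.
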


\subsection{Khovanov skein flow category}
\begin{definition}
The Khovanov skein flow category \( \mathscr{C}_{Sk}(D_L) \) has one object for each labeled resolution configuration of the form \( \mathbf{x} = (D_L(u), x) \), where \( u \in \{0,1\}^{n(D_L)} \). For objects \( \mathbf{x} = (D_L(u), x) \) and \( \mathbf{y} = (D_L(v), y) \) such that \( \mathbf{y} \prec' \mathbf{x} \), the moduli space  
\[
\mathcal{M}_{\mathscr{C}_{Sk}(D_L)}(\mathbf{x}, \mathbf{y}) = \mathcal{M}(D_L(v) \setminus D_L(u), x|, y|)
\]
is a smooth manifold with corners. For all other pairs of objects \( \mathbf{x} \), \( \mathbf{y} \) not related by \( \prec' \), the space \( \mathcal{M}(\mathbf{x}, \mathbf{y}) \) is defined to be empty. The composition maps in \( \mathscr{C}_{Sk}(D_L) \) are induced from the composition maps in the resolution moduli spaces in the annulus.

The flow category grading on the objects of \( \mathscr{C}_{Sk}(D_L) \) is given by the homological grading \( \hgr \). In addition, the objects are equipped with two auxiliary gradings: the quantum grading \( \qgr \) and the homotopical grading \( \fgr \).

There is a cover functor
\[
\mathscr{F} : \mathscr{C}_{Sk}(D_L) \to \mathscr{C}_C(n(D_L))[-n_{-}(D_L)]
\]
defined as follows. On the level of objects, \( \mathscr{F} \) is the forgetful functor:
\[
\mathscr{F}((D_L(v), y)) = v.
\]
On the level of morphisms, \( \mathscr{F} \) is a covering map
\[
\mathscr{F} : \mathcal{M}_{\mathscr{C}_{Sk}(D_L)}((D_L(u), x), (D_L(v), y)) \to \mathcal{M}_{\mathscr{C}_C(n(D_L))}(u, v),
\]
defined as the composition
\[
\mathcal{M}(D_L(v) \setminus D_L(u), x|, y|) 
\xrightarrow{\mathcal{F}} \mathcal{M}_{\mathscr{C}_C(|u|-|v|)}(\overline{1}, \overline{0})
\xhookrightarrow{\mathcal{I}_{u,v}} \mathcal{M}_{\mathscr{C}_C(n(D_L))}(u, v).
\]
The cube flow category \( \mathscr{C}_C(n(D_L)) \) can be regarded as a framed flow category, where the framing is determined by a sign assignment \( s \) on the cube \( C(n(D_L)) \); see \cite[Proposition 4.12]{LipSar2014}. Furthermore, since \( \mathscr{C}_{Sk}(D_L) \) is a cover of the framed flow category \( \mathscr{C}_C(n(D_L)) \), we may also view \( \mathscr{C}_{Sk}(D_L) \) as a framed flow category, where the coherent framing and neat embedding are induced from those on \( \mathscr{C}_C(n(D_L)) \).
\end{definition}

Suppose the (flow category) gradings of all objects in \( \mathscr{C}_{Sk}(D_L) \) lie within the interval \([B, A]\). Let  
\[
N = d_{B} + d_{B+1} + \cdots + d_{A-1} - B,
\]  
where \( \mathbf{d} \) is the sequence specifying the dimensions used in the neat embedding of the flow category \( \mathscr{C}_{Sk}(D_L) \); see \cite[Definition 3.15]{LipSar2014}.

\begin{definition}
The \textit{Khovanov skein space} is the Cohen–Jones–Segal realization \( |\mathscr{C}_{Sk}(D_L)| \) of the framed flow category \( \mathscr{C}_{Sk}(D_L) \); see \cite[Definition 3.23]{LipSar2014}. The \textit{Khovanov skein spectrum} \( \skspec(L) \) is defined as the suspension spectrum of the Khovanov skein space, desuspended by an integer \( N \). By \cite[Lemma 3.31]{LipSar2014}, this spectrum decomposes as a wedge sum over quantum gradings \( q \) and homotopical gradings \( f \):
\[
\skspec(L) = \bigvee_{q,f} \skspecgrad{q}{f}(L).
\]
\end{definition}

\subsection{Invariance of Khovanov skein spectrum}

\begin{theorem}
Let \( L \) be a link diagram, and let \( L' \) be obtained from \( L \) by a sequence of Reidemeister moves \( \Rom{1} \), \( \Rom{2} \), and \( \Rom{3} \) performed in the annulus. Then, for all quantum gradings \( q \) and homotopical gradings \( f \), the skein spectrum \( \skspecgrad{q}{f}(L) \) is stably homotopy equivalent to \( \skspecgrad{q}{f}(L') \).
\end{theorem}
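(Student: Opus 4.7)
The plan is to mirror the strategy of Lipshitz--Sarkar \cite[Section 6]{LipSar2014}, treating each of the allowable Reidemeister moves I, II and III in the annulus separately, and upgrading the chain-level cancellations in Khovanov skein homology to stable equivalences of framed flow categories. By the wedge decomposition $\skspec(L)=\bigvee_{q,f}\skspecgrad{q}{f}(L)$ from \cite[Lemma 3.31]{LipSar2014}, it suffices to produce stable equivalences of the total spectra $\skspec(L)\simeq\skspec(L')$ arising from an equivalence of framed flow categories that respects both the quantum and homotopical gradings.

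First I would prove the following cancellation lemma in the skein setting: if $(D_L(u),x)\prec'(D_L(v),y)$ is a pair of objects of $\scrC_{Sk}(D_L)$ whose moduli space $\mathcal{M}(\mathbf{y},\mathbf{x})$ is a single framed point, and both objects carry the same quantum and homotopical gradings, then one can delete this pair from $\scrC_{Sk}(D_L)$ to obtain a framed flow category whose CJS realization is stably equivalent to $|\scrC_{Sk}(D_L)|$ after accounting for the grading shift. This is the direct analog of \cite[Proposition 6.1]{LipSar2014}; the verification that the covering structure to $\scrC_C(n(D_L))$ descends to the cancelled category is routine, because the cover functor $\mathscr{F}$ is trivialized component-wise on every moduli space by (RM-4). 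Crucially, since the cancellation only identifies pairs in a fixed $(\qgr,\fgr)$-bigrading, the cancellations preserve the wedge decomposition.

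Next I would apply this lemma move-by-move. For an allowable Reidemeister I move in $A$, the extra crossing produces either a trivial circle on which the standard Khovanov cancellation $x_+\leftrightarrow x_-$ of Lipshitz--Sarkar applies verbatim (since trivial circles carry the $w_\pm$ labelings that play the role of $x_\pm$), or a stabilization involving a non-trivial circle in which case the chain-level cancellation $\phi_s,\phi_d$ of Proposition \ref{prop 2.2} identifies the desired pair; in both cases the matching pairs lie in the same $(\qgr,\fgr)$-bigrading up to a global shift which exactly compensates the change in $n_\pm(D_L)$. For Reidemeister II, the four states at the two new crossings give a $2\times 2$ sub-cube; two of the three non-terminal resolutions produce a canonical cancellable pair (the ``small circle'' one can be split off), and the analysis is formally identical to \cite[Section 6.2]{LipSar2014}, with the added observation that the small circle appearing in the middle resolutions is trivial, so the labels $w_\pm$ govern the cancellation and no $f$-grading change occurs. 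For Reidemeister III, as usual, one reduces to an iterated application of Reidemeister II via the braid-like relations used in \cite[Section 6.3]{LipSar2014}; the key input is that the ladybug matching we fixed in Section \ref{section 3} (the right-pair convention) agrees with the one used by Lipshitz--Sarkar on every type-(i) ladybug, and the type-(ii) ladybug never appears in a Reidemeister III local model since such a configuration requires a non-trivial circle, which cannot be created by a local move.

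The main obstacle I foresee is the Reidemeister III analysis in the presence of strands that wind around the braid axis. Concretely, one must check that the type-(ii) ladybug matching chosen in Section \ref{section 3} is compatible with the cancellations used to derive R-III invariance from two applications of R-II, i.e., that the two ways of decomposing the same R-III move produce cancellable pairs living in identical bigradings and related by the same framings. This will follow once one verifies, as in \cite[Lemma 6.9]{LipSar2014}, that the relevant flow category maps agree up to an explicit elementary sub-category, but the verification is noticeably more intricate here because one must simultaneously track the $\fgr$-grading on every intermediate resolution. Once this is established, combining the three move invariances gives $\skspecgrad{q}{f}(L)\simeq\skspecgrad{q}{f}(L')$ as claimed.
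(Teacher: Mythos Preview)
Your overall plan of following \cite[Section 6]{LipSar2014} is correct, but the execution contains a genuine confusion and misses the simple observation that drives the paper's proof.

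The paper's argument is much shorter than yours because it rests on one observation: for an \emph{allowable} Reidemeister move in $A$ (i.e.\ one taking place in a disk disjoint from the braid axis), every special circle created or destroyed by the move is a \emph{trivial} circle.  Hence it carries the labels $w_\pm$, which obey exactly the same merge/split rules as $x_\pm$ in ordinary Khovanov homology and contribute nothing to $\fgr$.  Consequently the acyclic subcomplex and quotient identifications from \cite[Proposition~6.2--6.4]{LipSar2014} go through verbatim; one only replaces $x_\pm$ by $w_\pm$ on the distinguished circle.  No new ladybug analysis, no type-(ii) configurations, and no separate case for non-trivial circles are needed.  The paper packages this via the upward/downward closed subcategory formalism and \cite[Lemma 3.32]{LipSar2014}, rather than a handle-cancellation lemma.

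Your proposal goes astray in the RI case: you allow for the possibility that the new circle is non-trivial and invoke the maps $\phi_s,\phi_d$ of Theorem~\ref{prop 2.2}.  This conflates two different operations.  An allowable RI move in $A$ always produces a trivial kink circle; a positive \emph{braid stabilization} adds a non-trivial strand, changes the underlying annular link, and shifts $\fgr$ by $\pm 1$ (as Theorem~\ref{prop 2.2} shows).  Braid stabilization is not an annular isotopy, and $\skspecgrad{q}{f}$ is not invariant under it, so that case should not appear in this proof at all.  Similarly, your RIII discussion worries about type-(ii) ladybugs, but the local RIII picture lives in a disk away from the axis, so only the type-(i) matching is relevant and the Lipshitz--Sarkar argument applies unchanged.  (Also, RIII in \cite{LipSar2014} is handled directly, not by reduction to RII.)
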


\begin{proof}
    The proof is analogous to that of the invariance of the Khovanov spectrum under Reidemeister moves; see \cite[Proposition 6.1]{LipSar2014}. Suppose \( L' \) is obtained from \( L \) by an allowable Reidemeister \( \Rom{1} \) move in the annulus.
\begin{figure}[htp]
    \centering
    \input{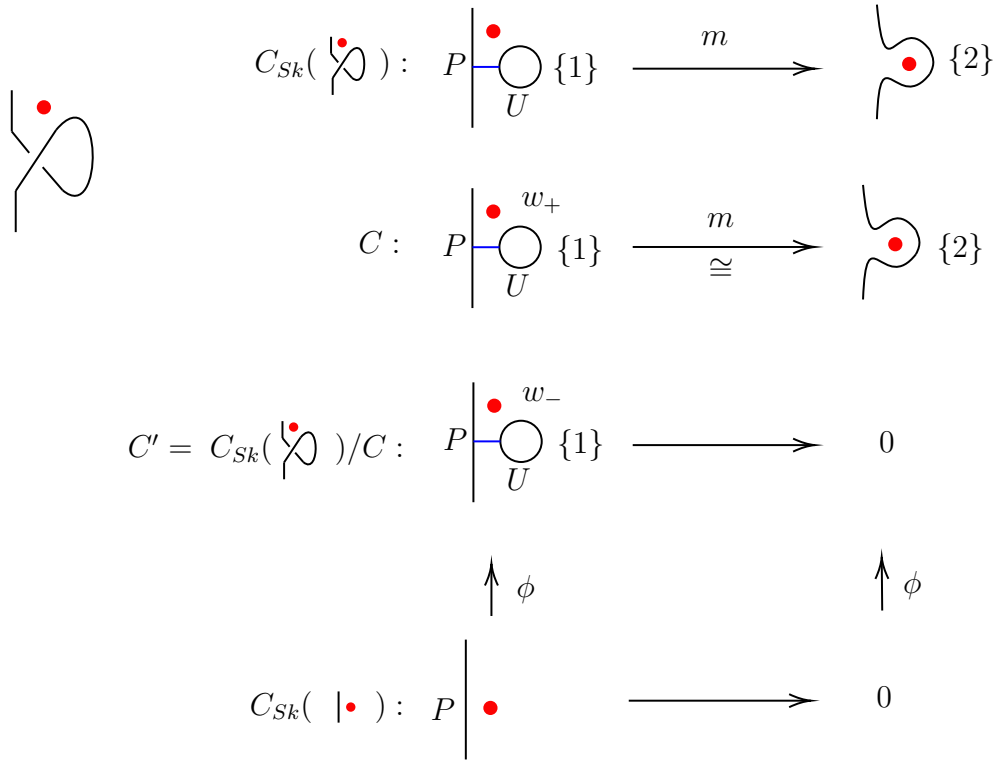}
    \caption{Invariance under Reidemeister moves \Rom{1}.}
    \label{fig:Invariance_under_R1}    
\end{figure}
Note that there exists an acyclic subcomplex \( C \) of \( C_{Sk}(L') \), where the special circle \( U \) is trivial and labeled with \( w_{+} \); see Figure~\ref{fig:Invariance_under_R1}. This subcomplex \( C \) corresponds to an upward closed subcategory \( \mathscr{C}\) of \( \mathscr{C}_{Sk}(L') \), and the quotient complex \( C' = C_{Sk}(L') / C \) corresponds to the complementary downward closed subcategory \( \mathscr{C}' \). There is a cochain isomorphism
\[
\phi: C_{Sk}(L) \to C'
\]
defined by \( \phi(x) = x \otimes w_{-} \), where \( x \) is a labeling on \( P \). The map \( \phi \) preserves both the quantum and homotopical gradings. Moreover, the downward closed subcategory \( \mathscr{C}' \) is isomorphic to \( \mathscr{C}_{Sk}(L) \). Finally, since $C$ is acyclic, hence the inclusion map $|\scrC_{Sk}(L)|\cong |\scrC'|\xhookrightarrow{} |\scrC_{Sk}(L')|$ is a homotopy equivalence for some coherent choices of framing by \cite[Lemma 3.32]{LipSar2014}. The key idea in this proof is that, since we consider only allowable Reidemeister moves within the annulus, the special circle \( U \) is always trivial. For a similar reason, the invariance under Reidemeister moves \( \Rom{2} \) and \( \Rom{3} \) in the annulus can also be established.
\end{proof}

For a framed flow category \( \mathscr{C} \), the associated cochain complex \( (C^{*}(\mathscr{C}), \partial) \) is defined as follows; see \cite[Definition 3.20]{LipSar2014}. The cochain groups \( C^{n}(\mathscr{C}) \) are free \(\mathbb{Z}\)-modules generated by objects in \( \Ob(\mathscr{C}) \) with flow category grading \( n \). For \( x,y \in \Ob(\mathscr{C}) \) satisfying \(\mathrm{gr}(x) = \mathrm{gr}(y) + 1 \), the coefficient \( \langle \partial y, x \rangle \) is given by the signed count of the moduli space \( \mathcal{M}_{\mathscr{C}}(x,y) \).

Similarly, working with \(\mathbb{Z}_2\)-coefficients, we define the cochain complex \( (\overline{C^*(\mathscr{C})}, \overline{\partial}; \mathbb{Z}_2) \) where the groups \( \overline{C^n(\mathscr{C})} \) are free \(\mathbb{Z}_2\)-modules generated by objects of grading \( n \). For \( x,y \in \Ob(\mathscr{C}) \) with \(\mathrm{gr}(x) = \mathrm{gr}(y) + 1 \), the coefficient \( \langle \overline{\partial} y, x \rangle \) is the mod 2 count of \( \mathcal{M}_{\mathscr{C}}(x,y) \).

\begin{lemma}\label{lemma 3.4}
    For a framed flow category $\mathscr{C}$, if each nonempty $0$-dimensional moduli space $\mathcal{M}_{\mathscr{C}}(x,y)$ consists of one point, then the reduced cellular cochain complex $\widetilde{C}^{*}(|\mathscr{C}|;\mathbb{Z}_{2})[-N]$ is isomorphic to the associated cochain complex $(\overline{C^{*}(\mathscr{C})},\overline{\partial};\mathbb{Z}_{2})$ where $[\cdot]$ denotes the degree shift operator.
\end{lemma}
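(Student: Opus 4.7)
The plan is to unpack the CW structure on the Cohen--Jones--Segal realization $|\mathscr{C}|$ and compare its reduced cellular cochain complex directly, degree by degree, with $(\overline{C^{*}(\mathscr{C})}, \overline{\partial}; \mathbb{Z}_{2})$. Recall that in the CJS construction $|\mathscr{C}|_{\widetilde{\imath}, \widetilde{d}, \widetilde{\psi}}$, each object $x \in \Ob(\mathscr{C})$ of flow-category grading $n$ contributes a single cell $\mathcal{C}(x)$ of dimension $n + N$, together with one basepoint cell. The characteristic map of $\mathcal{C}(x)$ is assembled from the framed cobordisms built out of the neat embedding $\widetilde{\imath}$ and the coherent framing $\widetilde{\psi}$, using the moduli spaces $\mathcal{M}_{\mathscr{C}}(x,y)$ as gluing data; see \cite[Definition~3.23]{LipSar2014}.

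First I would identify the cellular generators. After collapsing the basepoint, the reduced cellular chain group $\widetilde{C}_{n+N}(|\mathscr{C}|)$ is the free abelian group on $\{[\mathcal{C}(x)] : \mathrm{gr}(x) = n\}$, so after the degree shift $[-N]$ one already has a term-by-term identification with the generators of $C^{n}(\mathscr{C})$. Next I would analyze the cellular boundary map. For $y$ with $\mathrm{gr}(y) = \mathrm{gr}(x) - 1$, the coefficient of $[\mathcal{C}(y)]$ in $\partial^{\mathrm{CW}}[\mathcal{C}(x)]$ is by definition the degree of the composition of the attaching map of $\mathcal{C}(x)$ with the collapse $|\mathscr{C}|^{(n+N-1)} \to S^{n+N-1}$ onto the sphere corresponding to $y$. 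By the construction of the CJS realization of a framed flow category, this degree equals the signed count of points of the $0$-dimensional moduli space $\mathcal{M}_{\mathscr{C}}(x,y)$, with signs prescribed by the framing $\widetilde{\psi}$.

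Dualizing and passing to $\mathbb{Z}_{2}$-coefficients, the cellular coboundary sends the cochain dual to $[\mathcal{C}(y)]$ to $\sum_{x} \left(|\mathcal{M}_{\mathscr{C}}(x,y)| \bmod 2\right) [\mathcal{C}(x)]^{\vee}$. Under the hypothesis that every nonempty $0$-dimensional moduli space consists of a single point, this mod-$2$ count equals $1$ exactly when $\mathcal{M}_{\mathscr{C}}(x,y) \neq \emptyset$, which is precisely $\langle \overline{\partial} y, x \rangle$ in $\overline{C^{*}(\mathscr{C})}$. Therefore the assignment $[\mathcal{C}(x)]^{\vee} \mapsto x$ extends to a cochain complex isomorphism $\widetilde{C}^{*}(|\mathscr{C}|;\mathbb{Z}_{2})[-N] \cong (\overline{C^{*}(\mathscr{C})}, \overline{\partial}; \mathbb{Z}_{2})$, which is the claim.

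The main obstacle is the identification of the cellular attaching-map degree with the framed signed count of $\mathcal{M}_{\mathscr{C}}(x,y)$ used in Step~2; this is the defining property of the CJS realization of a framed flow category rather than something I would rederive, and I would simply cite \cite{CohJonSeg1995} together with \cite[Section~3]{LipSar2014}. Once that identification is in hand, the singleton hypothesis collapses the signed count to the unsigned mod-$2$ count, at which point matching with $\overline{\partial}$ is immediate.
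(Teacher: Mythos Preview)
Your proposal is correct and follows essentially the same route as the paper: identify the cells of $|\mathscr{C}|$ with objects of $\mathscr{C}$, recognize the cellular attaching-map degree as a signed count of $\mathcal{M}_{\mathscr{C}}(x,y)$, and then reduce mod $2$ so the singleton hypothesis forces this count to equal $\langle \overline{\partial} y, x\rangle$. The only cosmetic difference is that the paper pins down the degree computation via the product decomposition $C_{y}(x)\cong C(y)\times \mathcal{M}_{\mathscr{C}}(x,y)$ and cites \cite[Lemma~3.24]{LipSar2014} specifically, whereas you cite the CJS construction more globally.
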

\begin{proof}
   For $x,y\in \text{Ob}(\mathscr{C})$, such that $\text{gr}(x)=\text{gr}(y)+1$, the cellular boundary map $\pi: \partial C(x)\to C(y)/ \partial C(y)$ has degree $\#  \pi^{-1}(p)$ for any $p\in \text{interior}\big(C(y)\big)$. But since we have a homeomorphism $C_{y}(x) \cong C(y) \times \mathcal{M}_{\mathscr{C}}(x,y) $, the signed count of $\#  \pi^{-1}(p)$ is equal to $(-1)^{K}\times \#\mathcal{M}_{\mathscr{C}}(x,y)$ for some $K\in \mathbb{Z}$; see the proof of \cite[Lemma 3.24]{LipSar2014}. If we assume that $\mathcal{M}_{\mathscr{C}}(x,y)$ consists of a single point (if nonempty), then the coefficient of $x$ in $\overline{\partial} y$ is $<\overline{\partial}y,x> = \overline{1}\in \mathbb{Z}_{2}$. On the other hand, the degree of the map $\pi$ is also $\overline{(-1)^k} =\overline{1}\in \mathbb{Z}_{2}$. Now this lemma follows because $\Hom(\mathbb{Z}, \mathbb{Z}_{2})\cong \mathbb{Z}_{2}$.
\end{proof}

For two objects \(\mathbf{x}, \mathbf{y}\) in \(\mathscr{C}_{Sk}(D_L)\) satisfying \(\mathbf{y} \prec' \mathbf{x}\), the homological grading satisfies \(\hgr(\mathbf{y}) < \hgr(\mathbf{x})\), while the quantum and homotopical gradings are preserved: \(\qgr(\mathbf{y}) = \qgr(\mathbf{x})\) and \(\fgr(\mathbf{y}) = \fgr(\mathbf{x})\). Consequently, for fixed gradings \(j\) and \(k\), the full subcategory \(\mathscr{C}_{Sk}^{j,k}(D_L)\) of \(\mathscr{C}_{Sk}(D_L)\) is defined by objects \(\mathbf{x}\) satisfying \(\qgr(\mathbf{x}) = j\) and \(\fgr(\mathbf{x}) = k\). This subcategory inherits a framed flow category structure with neat embeddings and framings induced from the framed flow category \(\mathscr{C}_{Sk}(D_L)\). Thus, we have the following corollary.
\begin{corollary}\label{corollary 3.11}
    The reduced cellular cochain complex $\widetilde{C}^{*}(|\mathscr{C}_{Sk}^{j,k}(D_{L})|;G)[-N]$ is isomorphic to $ (C_{Sk}^{*,j,k}(D_{L}), \partial_{Sk}^{*,j,k} )$ when $G=\mathbb{Z}$ and isomorphic to $(\overline{C_{Sk}^{*,j,k}(D_{L})},\overline{\partial_{Sk}^{*,j,k}};\mathbb{Z}_{2})$ when $G=\mathbb{Z}_{2}$.
\end{corollary}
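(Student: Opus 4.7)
The plan is to reduce the statement to Lemma \ref{lemma 3.4} together with a sign-matching argument transported along the cover functor $\mathscr{F}:\scrC_{Sk}(D_L)\to\scrC_C(n(D_L))[-n_{-}(D_L)]$. First I would observe that $\scrC_{Sk}^{j,k}(D_L)$ is genuinely a framed flow category: since $\mathbf{y}\prec'\mathbf{x}$ forces $\qgr(\mathbf{y})=\qgr(\mathbf{x})$ and $\fgr(\mathbf{y})=\fgr(\mathbf{x})$, this full subcategory is both upward and downward closed inside $\scrC_{Sk}(D_L)$, so the neat embedding and framing restrict. By \cite[Lemma 3.31]{LipSar2014} the realisation $|\scrC_{Sk}^{j,k}(D_L)|$ is exactly the wedge summand corresponding to $(j,k)$, so the desired cellular cochain complex is well-defined.

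Next I would verify the hypothesis of Lemma \ref{lemma 3.4} for $\scrC_{Sk}^{j,k}(D_L)$. If $\hgr(\mathbf{x})=\hgr(\mathbf{y})+1$ and $\mathbf{y}\prec'\mathbf{x}$, then $\mathbf{x}=(D_L(v),x)$ and $\mathbf{y}=(D_L(u),y)$ with $|v|=|u|+1$, so $D_L(u)\setminus D_L(v)$ is an index $1$ basic decorated resolution configuration. By the construction of the $0$-dimensional resolution moduli spaces, $\mathcal{M}(D_L(u)\setminus D_L(v),x|,y|)$ consists of exactly one point. Consequently Lemma \ref{lemma 3.4} applies and yields the isomorphism
\[
\widetilde{C}^{*}(|\scrC_{Sk}^{j,k}(D_L)|;\mathbb{Z}_2)[-N]\;\cong\;(\overline{C_{Sk}^{*,j,k}(D_L)},\overline{\partial_{Sk}^{*,j,k}};\mathbb{Z}_2),
\]
because in both complexes the coefficient $\langle\partial \mathbf{y},\mathbf{x}\rangle$ is $\overline{1}\in\mathbb{Z}_2$ precisely when $\mathbf{y}\prec'\mathbf{x}$.

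For $G=\mathbb{Z}$ the only extra content is a sign check. Recall that the framing on $\scrC_{Sk}(D_L)$ is pulled back via $\mathscr{F}$ from a coherent framing on $\scrC_C(n(D_L))$ determined by a sign assignment $s$ on the cube, and that the associated cochain complex of $\scrC_C(n(D_L))$ is, by \cite[Proposition 4.12]{LipSar2014}, identified with the standard cube complex whose boundary uses $s$. Since $\mathscr{F}$ is an isomorphism on each nonempty index $1$ moduli space (both are single points), the signed count defining $\langle\partial\mathbf{y},\mathbf{x}\rangle$ in $\scrC_{Sk}^{j,k}(D_L)$ equals the signed count in $\scrC_C(n(D_L))$ of the corresponding edge, which is $\pm 1$ according to $s$. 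By the standard fact that any two sign assignments on the cube differ by a coboundary and hence give isomorphic cochain complexes, we may assume $s=s_0$, after which the cellular boundary from Lemma \ref{lemma 3.4} (which produces $(-1)^{K}$ rather than $+1$; see the proof there) agrees on the nose with $\partial_{Sk}^{*,j,k}$. The main technical point, and the one I would write out carefully, is this last identification of the cellular sign $(-1)^K$ with the standard sign assignment $s_0(\mathcal{C}_{v,u})$; everything else is bookkeeping inherited from the cube flow category via the cover functor $\mathscr{F}$.
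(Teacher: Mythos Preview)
Your proposal is correct and follows essentially the same approach as the paper. The paper's proof is more terse: for $G=\mathbb{Z}$ it simply cites \cite[Lemma 3.24]{LipSar2014} (which already identifies the reduced cellular cochain complex of $|\mathscr{C}|$ with the associated signed cochain complex of a framed flow category) and then observes that the single-point $0$-dimensional moduli spaces make this associated complex equal to $(C_{Sk}^{*,j,k}(D_L),\partial_{Sk}^{*,j,k})$, while for $G=\mathbb{Z}_2$ it invokes Lemma~\ref{lemma 3.4} exactly as you do; your version unpacks the sign identification by hand via the cover functor and the sign-assignment-up-to-coboundary argument, which is fine but not needed once one appeals to the cited lemma.
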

\begin{proof}
     When $G=\mathbb{Z}$, the associated cochain complex $(C^{*}(\mathscr{C}_{Sk}^{j,k}(D_{L})),\partial)$ is the same cochain complex as $(C_{Sk}^{*,j,k}(D_{L}), \partial_{Sk}^{*,j,k} )$ since all nonempty zero-dimensional moduli spaces in Khovanov skein flow category consist of single points, and now we use \cite[Lemma 3.24]{LipSar2014}. The $G=\mathbb{Z}_{2}$ case follows from applying Lemma \ref{lemma 3.4}.
\end{proof}

\section{Transverse invariant as extreme Khovanov skein  Spectrum} \label{section_4}
Given an oriented link diagram \( D_L \) of an oriented link \( L \subset A \times I \), there is a natural embedding $L \hookrightarrow A \times I \hookrightarrow S^3$. A labeled resolution configuration \( (D_L(\alpha), y) \in C_{Kh}(D_L) \), originally defined in the sphere \( S^2 \), can be reinterpreted in the annular setting by distinguishing between trivial and nontrivial circles. Specifically, trivial circles—those bounding disks in the annulus—are labeled with \( w_\pm \) in place of \( x_\pm \), while nontrivial circles—those representing nontrivial elements in \( H_1(A; \mathbb{Z}) \)—are labeled with \( v_\pm \). This refinement allows \( (D_L(\alpha), y) \) to be understood as an element of the annular Khovanov complex.
 \par
Moreover, the Khovanov differential \( \partial_{Kh} \) increases the homological grading by 1, preserves the quantum grading, and decomposes as $\partial_{Kh} = \partial_{Kh,0} + \partial_{Kh,-2}$, where \( \partial_{Kh,0} \) preserves the homotopical grading and \( \partial_{Kh,-2} \) lowers the homotopical grading by 2; see the proof of \cite[Lemma 2.3]{Rob2013}.
\par
The homotopical grading induces a filtration \( \mathcal{F}^j \) on the complex  
$C_{Kh}^{*,j}(D_L) = \bigoplus_i C_{Kh}^{i,j}(D_L)$ for each fixed quantum grading \( j \), given by
\[
0 \subset \mathcal{F}^j_{f_{\min}(D_L, j)}(D_L) \subset \mathcal{F}^j_{f_{\min}(D_L, j) + 2}(D_L) \subset \cdots \subset C_{Kh}^{*,j}(D_L),
\]
where \( \mathcal{F}^j_f(D_L) \) denotes the subcomplex generated by all labeled resolution configurations \( (D_L(\alpha), y) \in C_{Kh}^{*,j}(D_L) \) whose homotopical grading is less than or equal to \( f \). In \cite{LipSar2014}, Lipshitz and Sarkar constructed a spectrum \( \khspecgrad{j}(L) \), referred to as the \emph{Khovanov spectrum}, associated to an oriented link \( L \subset S^3 \), for each quantum grading \( j \).

\begin{theorem} \label{map-between-spectra}
    For an oriented closed braid diagram $B_{L}\subset A$ with $b(B_{L})$ number of strands representing $L\subset A\times I$, and for each quantum grading $j$, there is a map 
    $$
    \Psi^{j}(B_{L}):\khspecgrad{j}(B_{L}) \to \skspecgrad{j}{f_{\text{min}}(B_{L},j)}(B_{L}),
    $$ 
    such that the induced map on the reduced cohomology 
    $$
    \Psi^{j}(B_{L})^{*}: \widetilde{H}^{i}\big(\skspecgrad{j}{f_{\text{min}} (B_{L},j)}(B_{L}); G\big) \to \widetilde{H}^{i}\big(\khspecgrad{j}(B_{L});G\big)
    $$
    is the same map as $\jmath^{*}: H_{Sk}^{i,j,f_{\text{min}} (B_{L},j)}(L;G) \to H_{Kh}^{i,j}(L;G)$ induced from the embedding $\jmath:  A \times I \hookrightarrow S^{3}$, for $G= \mathbb{Z}_{2}$ or $\mathbb{Z}$.\par
    In particular, when $j=sl(L)$, we get $\Psi^{sl(L)}(B_{L}): \khspecgrad{sl(L)}(B_{L}) \to \skspecgrad{sl(L)}{-b(B_{L})}(B_{L}) =\mathbb{S}$ such that the induced map $\Psi^{sl(L)}(B_{L})^{*}: G\cong H_{Sk}^{0,sl(L), -b(B_{L})}(L;G) \to H_{Kh}^{0,sl(L)}(L;G)$ satisfies $\Psi^{sl(L)}(B_{L})^{*}(\psi_{Sk}(B_{L})) = \psi_{Kh}(B_{L})$, for $G= \mathbb{Z}_{2}$ or $\mathbb{Z}$.
\end{theorem}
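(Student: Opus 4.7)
The plan is to realize $\Psi^{j}(B_{L})$ as the collapse map attached to an upward-closed sub-flow category of $\scrC_{Kh}^{j}(B_{L})$. First I would show that the full subcategory $\scrC^{j}_{\min} \subset \scrC_{Kh}^{j}(B_{L})$ on objects $\mathbf{x}$ with $\fgr(\mathbf{x}) = f_{\text{min}}(B_{L}, j)$ is upward closed. Since the Khovanov differential decomposes as $\partial_{Kh} = \partial_{Kh,0} + \partial_{Kh,-2}$ with $\partial_{Kh,-2}$ strictly lowering the $f$-grading by two, any object $\mathbf{x}$ with $\mathbf{y} \prec \mathbf{x}$ and $\fgr(\mathbf{y}) = f_{\text{min}}$ satisfies $\fgr(\mathbf{x}) \le f_{\text{min}}$, hence equals $f_{\text{min}}$ by minimality. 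The same argument applied to chains shows that every intermediate object $\mathbf{z}$ in a chain $\mathbf{y} \prec \mathbf{z} \prec \mathbf{x}$ between two objects of $\scrC^{j}_{\min}$ lies in $\scrC^{j}_{\min}$, so all moduli spaces between such objects are already contained in the sub-flow category.

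Next I would identify $\scrC^{j}_{\min}$ with $\scrC_{Sk}^{j, f_{\text{min}}}(B_{L})$ as framed flow categories. On objects the bijection relabels $x_{\pm}$ as $w_{\pm}$ for trivial circles and $v_{\pm}$ for nontrivial circles; by the previous paragraph, the restrictions of $\prec$ and $\prec'$ coincide on $\scrC^{j}_{\min}$, since no $\partial_{Kh,-2}$-step can occur between extreme-$f$ objects. For the higher-index moduli spaces I would appeal to the inductive construction of Section \ref{section 3} together with \cite[Proposition 5.2]{LipSar2014}, observing that the ladybug matching prescribed by the right pair and the cover functor into $\scrC_{C}(n(B_{L}))$ are defined identically in both settings. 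The neat embedding and coherent framing on $\scrC_{Kh}^{j}(B_{L})$ then restrict to the prescribed data on $\scrC_{Sk}^{j, f_{\text{min}}}(B_{L})$.

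With the identification in hand, the complementary full subcategory $\scrC_{Kh}^{j}(B_{L}) \setminus \scrC^{j}_{\min}$ is downward closed, so its realization is a CW-subcomplex of $|\scrC_{Kh}^{j}(B_{L})|$, and the Cohen-Jones-Segal construction delivers, after formal desuspension, a collapse map of spectra
$$\Psi^{j}(B_{L}) : \khspecgrad{j}(B_{L}) \longrightarrow \skspecgrad{j}{f_{\text{min}}(B_{L}, j)}(B_{L}).$$
Using Corollary \ref{corollary 3.11} to identify reduced cellular cochain complexes with the Khovanov and skein cochain complexes, the induced map $(\Psi^{j}(B_{L}))^{*}$ is precisely the cochain inclusion $C_{Sk}^{*, j, f_{\text{min}}}(B_{L}) \hookrightarrow C_{Kh}^{*, j}(B_{L})$, which is $\jmath^{*}$ at the cochain level. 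For $j = sl(L)$, Lemma \ref{lemma 2.2} gives $S_{sl(L), -b(B_{L})}(B_{L}) = \{\widehat{\psi}_{Sk}(B_{L})\}$, so $\scrC_{Sk}^{sl(L), -b(B_{L})}(B_{L})$ is a single-object category whose framed realization is the sphere spectrum $\mathbb{S}$, and unwinding the cochain description yields $(\Psi^{sl(L)}(B_{L}))^{*}(\widehat{\psi}_{Sk}(B_{L})) = \widehat{\psi}_{Kh}(B_{L})$.

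The hardest step will be the moduli-space verification in the second paragraph: one must check in every index $\ge 2$ that the cover-functor choices made for $\scrC_{Sk}$ in Section \ref{section 3} coincide with those induced on the extreme-$f$ subcategory by $\scrC_{Kh}$, especially at ladybug configurations of type (ii), which can appear in $\scrC^{j}_{\min}$ because the trivial circle there carries the forced labels $w_{+}$ and $w_{-}$. Compatibility should follow from both matchings being dictated by the same right-pair rule, but it requires a case-by-case analysis analogous to the one carried out in Section \ref{section 3} for index-3 configurations.
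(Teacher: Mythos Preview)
Your proposal is correct and follows essentially the same route as the paper: take the full subcategory of $\scrC_{Kh}^{j}(B_{L})$ on objects of minimal $f$-grading, observe it is upward closed (the paper simply asserts this, you justify it via $\partial_{Kh}=\partial_{Kh,0}+\partial_{Kh,-2}$), collapse the complementary downward-closed cells to obtain $\Psi^{j}$, and then invoke Corollary~\ref{corollary 3.11} and Lemma~\ref{lemma 2.2} for the cohomological and $j=sl(L)$ statements. The ``hardest step'' you flag---matching the higher moduli spaces of $\scrC_{Sk}^{j,f_{\min}}$ with those inherited from $\scrC_{Kh}^{j}$---is not treated as a difficulty in the paper at all; since on the extreme-$f$ objects the partial orders $\prec$ and $\prec'$ agree and both flow categories are built by the identical inductive recipe of \cite[Proposition~5.2]{LipSar2014} with the same right-pair ladybug matching and the same cover functor to the cube, the identification is taken as immediate, so you can safely drop the proposed case-by-case analysis.
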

\begin{proof}
    Recall, that for each quantum grading $j$, the Khovanov spectrum $\khspecgrad{j}(L)$ is the $N$ times formal desuspension of a suspension spectrum of a CW complex $|\mathscr{C}_{Kh}^{j}(B_{L})|$ which is the Cohen-Jones-Segal realization of the framed flow category $\mathscr{C}_{Kh}^{j}(B_{L})$; see \cite{LipSar2014}. Let $\mathscr{C}^{j}_{u}(B_{L})$ denote the subcategory of the framed flow category $\mathscr{C}_{Kh}^{j}(B_{L})$ containing objects from the collection $\big\{(B_{L}(\alpha),y): (B_{L}(\alpha),y)\in \mathcal{F}_{f_{\text{min}} (B_{L},j)}^{j}(B_{L})\big\}$, and all morphisms between them. Then  $\mathscr{C}^{j}_{u}(B_{L})$ is a full, upward closed subcategory, meaning for any two objects $\textbf{y}=(B_{L}(\alpha),y) $, and $ \textbf{x}=(B_{L}(\beta),x)$ in $\mathscr{C}_{Kh}^{j}(B_{L})$, satisfying $\Hom_{\mathscr{C}_{Kh}^{j}(B_{L})}(\textbf{x},\textbf{y})\neq \emptyset$, if $\textbf{y}\in \Ob (\mathscr{C}^{j}_{u}(B_{L}))$ then $\textbf{x} \in \Ob (\mathscr{C}^{j}_{u}(B_{L}))$; see \cite[Definition 3.29]{LipSar2014}.\par
    Recall from \cite[Definition 3.23]{LipSar2014} that the CW complex \( |\mathscr{C}_{Kh}^{j}(B_L)| \) has a cell \( C(\mathbf{x}) \) associated to each object \( \mathbf{x} \) in the framed flow category \( \mathscr{C}_{Kh}^{j}(B_L) \). By collapsing all cells \( C(\mathbf{x}) \) for which \( \mathbf{x} \notin \Ob(\mathscr{C}^{j}_{u}(B_L)) \), we obtain a quotient map  
    \[
    |\mathscr{C}_{Kh}^{j}(B_L)| \to |\mathscr{C}^{j}_{u}(B_L)|.
    \]
    After formally desuspending both sides \( N \)-times, this induces a map of spectra:
    \[
    \Psi^{j}(B_L): \khspecgrad{j}(B_L) \to \skspecgrad{j}{f_{\min}(B_L, j)}(B_L).
    \]
    Now the rest of the statements follows from the Corollary \ref{corollary 3.11}, and Lemma \ref{lemma 2.2}.
\end{proof}
We conclude this section with an observation related to the work of J. González-Meneses, P. M. G. Manchón, and M. Silvero \cite{MenesManchSilv2018}. In their work, they constructed a spectrum associated to the \emph{extreme Khovanov cohomology}. Subsequently, in \cite{MorSilv2018}, Federico Cantero Morán and Marithania Silvero showed that this extreme Khovanov spectrum is stably homotopy equivalent to the Khovanov spectrum of Lipshitz and Sarkar at the extreme quantum grading.

Given a link diagram \( B_L \subset A \subset \mathbb{R}^2 \), we define the \emph{extreme quantum grading} \( j_{\min}(B_L) \) as
\[
j_{\min}(B_L) = \min \left\{ j \mid \qgr(\alpha, x) = j \right\}.
\]
Let \( \overline{0}^{-} \) denote the labeled resolution configuration \( (B_L(\overline{0}), x) \), where each circle is labeled with \( x_{-} \). Let \( C(\alpha) \) denote the number of circles in the resolution configuration associated to \( \alpha \).

We now recall the following proposition; see \cite{MenesManchSilv2018}.

\begin{proposition}[\cite{MenesManchSilv2018}, Proposition 4.1]
 In the above setting, $j_{min} = \qgr(\overline{0}^{-})$  and $\qgr(\alpha, x) =
j_{min}$  if and only if $(\alpha, x) \in S_{min}$, where
\begin{align*}
S_{min} &= \big\{(\alpha, x)\mid |C(\alpha)| = |C(\overline{0})| + |\alpha| \,  \text{and }\, x =  x_{-}^{\otimes |C(\alpha)|}\big\}.   
\end{align*}
In particular, $j_{min} = n_{+} - 2n_{-} - |C(\overline{0})|$. 
\end{proposition}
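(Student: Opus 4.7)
My plan is to first reduce to a purely combinatorial question about how the circle count changes under surgery on arcs in the cube of resolutions, and then read off both the value $j_{\min}$ and the equality characterization directly from the formula for $\qgr$.

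First I would fix $\alpha \in \{0,1\}^{n}$ and minimize $\qgr((B_{L}(\alpha), x))$ over all labelings $x$. Since the labeling contribution is $|\{Z : x(Z) = +\}| - |\{Z : x(Z) = -\}|$, the minimum over labelings for fixed $\alpha$ is achieved precisely when every circle is labeled $x_{-}$, giving the value
\[
\qgr_{\min}(\alpha) \;=\; n_{+} - 2n_{-} + |\alpha| - |C(\alpha)|.
\]
So $j_{\min}$ is realized exactly on pairs $(\alpha, x)$ with $x = x_{-}^{\otimes |C(\alpha)|}$ and $\alpha$ minimizing $|\alpha| - |C(\alpha)|$, i.e. maximizing $|C(\alpha)| - |\alpha|$. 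The problem therefore collapses to bounding $|C(\alpha)| - |\alpha|$ over the cube.

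Next I would prove the combinatorial lemma $|C(\alpha)| - |\alpha| \leq |C(\overline{0})|$ by induction on $|\alpha|$, with equality if and only if $|C(\alpha)| = |C(\overline{0})| + |\alpha|$. The base case $\alpha = \overline{0}$ is trivial. For the step, choose $\alpha'$ with $\alpha' \succ \alpha$ and $|\alpha'| = |\alpha|+1$; then $B_{L}(\alpha')$ is obtained from $B_{L}(\alpha)$ by surgery along a single arc. This surgery is either a merge (two circles become one, so $|C|$ drops by $1$ while $|\alpha|$ grows by $1$, decreasing $|C|-|\alpha|$ by $2$) or a split (one circle becomes two, so $|C|-|\alpha|$ is preserved). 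In particular $|C(\alpha')|-|\alpha'| \leq |C(\alpha)| - |\alpha|$, and any path in the cube from $\overline{0}$ to $\alpha$ with $s$ splits and $m$ merges satisfies $|C(\alpha)| - |\alpha| = |C(\overline{0})| - 2m$. Hence the bound holds and equality occurs precisely when $m = 0$, i.e. when $|C(\alpha)| = |C(\overline{0})| + |\alpha|$.

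Combining the two steps gives $j_{\min} = n_{+} - 2n_{-} + 0 - |C(\overline{0})| = \qgr(\overline{0}^{-})$, and the equality $\qgr(\alpha, x) = j_{\min}$ holds if and only if both $x = x_{-}^{\otimes |C(\alpha)|}$ and $|C(\alpha)| = |C(\overline{0})| + |\alpha|$, which is precisely the defining condition for $(\alpha, x) \in S_{\min}$. The only step with any content is the combinatorial lemma on splits versus merges; everything else is substitution into the grading formula, so I do not anticipate a substantive obstacle beyond keeping track of signs in $\qgr$.
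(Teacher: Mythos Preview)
Your argument is correct. The paper does not supply its own proof of this proposition; it is quoted verbatim as \cite[Proposition 4.1]{MenesManchSilv2018} and used as input to the subsequent corollaries. Your two-step reduction---first minimizing over labelings at fixed $\alpha$ to get $x = x_{-}^{\otimes |C(\alpha)|}$, then bounding $|C(\alpha)| - |\alpha| \le |C(\overline{0})|$ via the merge/split dichotomy along any cube path---is exactly the standard proof of the cited result, and nothing is missing.
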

This leads us to an immediate corollary.
\begin{corollary}\label{transverse-invariant-in-extreme-state}
    For a closed braid diagram \( B_L \) representing a transverse link \( L \), the labeled resolution configuration \( (B_L^o, x^o) = \widehat{\psi}(L) \) lies in \( S_{\min} \). In this case, the extreme quantum grading coincides with the self-linking number, that is, $j_{\min} = sl(L)$.
\end{corollary}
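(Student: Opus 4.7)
The plan is to apply Proposition~4.1 to the labeled resolution configuration $\widehat{\psi}(L) = (B_L^o, x^o)$ directly. Once we verify that $\widehat{\psi}(L) \in S_{\min}$, both conclusions of the corollary follow at once: membership in $S_{\min}$ is the first assertion, and the proposition then gives $j_{\min} = \qgr(\widehat{\psi}(L))$, which equals $n_+ - 2n_- + |\alpha^o| - |C(\alpha^o)| = n_+ - n_- - b = sl(L)$ by the grading formulas already recalled in the discussion preceding Lemma~\ref{lemma 2.2}.

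To show $\widehat{\psi}(L) \in S_{\min}$, I would check the two defining conditions of $S_{\min}$. The labeling condition $x^o = x_-^{\otimes |C(\alpha^o)|}$ is immediate, since by construction $\widehat{\psi}(L)$ assigns $x_-$ to every circle of $Z(B_L^o)$. The combinatorial condition $|C(\alpha^o)| = |C(\overline{0})| + |\alpha^o|$ reduces, using the identities $|C(\alpha^o)| = b$ (the oriented resolution of a closed braid consists of its $b$ Seifert circles, one per strand) and $|\alpha^o| = n_-$ (by definition of the oriented state), to the single claim that $|C(\overline{0})| = b - n_-$.

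The computation $|C(\overline{0})| = b - n_-$ is the heart of the argument. I would prove it inductively, passing from the oriented state $\alpha^o$ to the all-zero state $\overline{0}$ by flipping the resolution at one negative crossing at a time. At each step we change the local picture at a single negative crossing from the vertical (oriented) $1$-smoothing to the horizontal $0$-smoothing. The key subclaim is that at every stage the two strands incident to the chosen negative crossing lie on distinct circles of the current resolution, so that the local swap merges two circles into one and the total circle count drops by exactly one. After all $n_-$ flips we obtain $|C(\overline{0})| = b - n_-$.

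The hard part will be justifying the separation hypothesis in the inductive step: a priori an earlier flip could already have joined the two circles incident to the next negative crossing, in which case the next flip would split one circle rather than merge two. I would attempt to organize the flips carefully—for example by processing them in a specific order derived from the braid word, or by appealing to a global surface invariant such as the Euler characteristic of the Bennequin surface associated with the closed braid—so that the distinctness condition is preserved throughout the procedure. This combinatorial step is the essential content of the corollary; the remaining steps are routine grading book-keeping.
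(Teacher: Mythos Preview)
Your approach is exactly the paper's: flip the $1$--smoothings at the negative crossings one at a time and claim each flip merges two distinct circles, so the count drops by one. The paper simply asserts this step without argument; you correctly flag it as the ``hard part.''

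That caution is warranted, because the step---and with it the corollary as stated---is false in general. Take the $2$--braid $\sigma_1^{-2}$ (closure the negative Hopf link): here $b=2$, $n_+=0$, $n_-=2$, so $sl(L)=-4$ and $\alpha^o=(1,1)$ with $|C(\alpha^o)|=2$. In the all--zero state both negative crossings receive the cap--cup $0$--smoothing, producing one small circle trapped between the two crossings and one large outer circle, so $|C(\overline 0)|=2$. Hence
\[
|C(\alpha^o)|=2\ \neq\ 4=|C(\overline 0)|+|\alpha^o|,
\]
so $\widehat\psi(L)\notin S_{\min}$; equivalently $j_{\min}=n_+-2n_--|C(\overline 0)|=-6\neq -4=sl(L)$. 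Tracing the inductive procedure explicitly: the first flip $(1,1)\to(0,1)$ does merge two circles ($2\to 1$), but the second flip $(0,1)\to(0,0)$ \emph{splits} one circle into two ($1\to 2$)---precisely the failure mode you anticipated when you worried that an earlier flip could join the two strands incident to the next negative crossing.

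So neither a clever ordering of the flips nor an Euler--characteristic argument can rescue the separation hypothesis; the corollary requires an additional assumption on the braid (it is, for instance, vacuously true for positive braids, where $\alpha^o=\overline 0$). The paper's proof contains the same unjustified assertion and is refuted by the same example.
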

\begin{proof}
    Recall from Section~\ref{section 2} that \( B_L^o = B_L(\alpha^o) \) denotes the oriented resolution of the braid diagram \( B_L \), where \( |\alpha^o| = n_{-}(B_L) \). If \( \alpha_i^o = 1 \) for some \( i \in \{1, \dots, n(B_L)\} \), then the \( i \)-th crossing must be a negative crossing. Replacing the 1-smoothing at the \( i \)-th crossing with a 0-smoothing yields a new resolution configuration \( B_L(\beta) \), where \( \beta_j = \alpha_j^o \) for all \( j \neq i \), and \( \beta_i = 0 \). This change reduces the number of circles by one, i.e., \( C(\beta) = C(\alpha^o) - 1 \).

    If \( \beta_j = 1 \) for some \( j \), we repeat the same procedure, switching the 1-smoothing to a 0-smoothing at the \( j \)-th crossing. Iterating this process eventually leads to the all-zero state \( \overline{0} \). Each step reduces the number of circles by one, and there are exactly \( |\alpha^o| = n_{-}(B_L) \) such steps. Therefore, $|C(\alpha^o)| = |C(\overline{0})| + |\alpha^o|$, as claimed.
\end{proof}
Analogous to the notion of extreme quantum grading in Khovanov homology, we define the pair \((j_{\min}, f_{\min}(B_L))\) as the \textit{extreme grading} in Khovanov skein homology. Here, \( j_{\min} = sl(L) \), by Corollary~\ref{transverse-invariant-in-extreme-state}, represents the extreme quantum grading, and \( f_{\min}(B_L) = -b \), by Lemma~\ref{lemma 2.2}, represents the extreme homotopical grading. Thus, we obtain the following corollary.
\begin{corollary}\label{corollary 4.4}
    For a closed braid diagram $B_{L}$ representing an transverse link $L$, the extreme Khovanov skein spectrum $\skspecgrad{j_{min}}{f_{\text{min}}(B_{L})}(B_{L}) = \mathbb{S}$ is the sphere spectrum.
\end{corollary}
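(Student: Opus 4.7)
The plan is to identify the framed flow subcategory $\mathscr{C}_{Sk}^{j_{min}, f_{\text{min}}(B_L)}(B_L)$ of the Khovanov skein flow category, show it consists of a single object sitting in homological grading zero with only its identity morphism, and then invoke the Cohen-Jones-Segal realization to conclude that the associated spectrum is the sphere spectrum $\mathbb{S}$.

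First, I would combine Corollary \ref{transverse-invariant-in-extreme-state} and Lemma \ref{lemma 2.2} to pin down the extreme bigrading, namely $j_{min} = sl(L)$ and $f_{\text{min}}(B_L) = -b(B_L)$. Lemma \ref{lemma 2.2} then provides the crucial input
\[
S_{sl(L), -b(B_L)}(B_L) = \{\widehat{\psi}_{Sk}(B_L)\},
\]
which implies that the full subcategory $\mathscr{C}_{Sk}^{sl(L), -b(B_L)}(B_L)$ contains exactly one object, the labeled resolution configuration $\widehat{\psi}_{Sk}(B_L) = (B_L^o, x^o_{Sk})$, and that this object lies in homological grading $0$.

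Next, I would observe that nonidentity morphisms in $\mathscr{C}_{Sk}(B_L)$ only exist between objects related by $\prec'$, a relation that preserves both the quantum and homotopical gradings while strictly increasing the homological grading. Hence any two distinct objects within the restricted bigrading would have to differ in homological grading, which is impossible here since there is only one object. The subcategory is therefore a one-object category with only its identity morphism.

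Finally, I would appeal to the Cohen-Jones-Segal realization of Section \ref{section 3}: the realization of a framed flow category consisting of a single object with only identity morphisms is a single cell, and after formal desuspension by the integer $N$ determined by the neat embedding data for $\mathscr{C}_{Sk}(B_L)$, the resulting spectrum is exactly $\mathbb{S}$. The only nontrivial point is matching the grading shifts: the unique object $\widehat{\psi}_{Sk}(B_L)$ has homological grading $0$, and the desuspension by $N$ is the same one used to define $\mathcal{X}_{Sk}^{q,f}(L)$ in general, so the shifts cancel precisely and we obtain the sphere spectrum, as required. The main obstacle is purely bookkeeping in this last step, namely ensuring the formal desuspension exponent inherited from the ambient flow category is consistent with the cell dimension produced by the one-object subcategory.
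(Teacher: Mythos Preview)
Your proposal is correct and follows essentially the same approach as the paper, which treats the corollary as an immediate consequence of Corollary~\ref{transverse-invariant-in-extreme-state} and Lemma~\ref{lemma 2.2} without writing out a separate proof. Your explicit identification of the single-object flow subcategory and the remark about the desuspension bookkeeping simply spell out what the paper leaves implicit.
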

\begin{remark}
    Using corollary \ref{corollary 4.4}, we have $\Psi^{sl(L)}(B_{L})\in \pi_{s}^{0}(\khspecgrad{sl(L)}(L))$ which is the cohomotopy transverse invariant described in \cite{LipSarTransverse}.
\end{remark}

\section{Example}
Let us consider the following closed braid representation of a two-crossing unknot, $B_{L}\subset A$ in the Figure \ref{fig:example-hypercube}.
Observe that the crossing labelled $1$ is a positive crossing and the one labelled $2$ is a negative crossing. 
\begin{figure}[htp]
    \centering
    \input{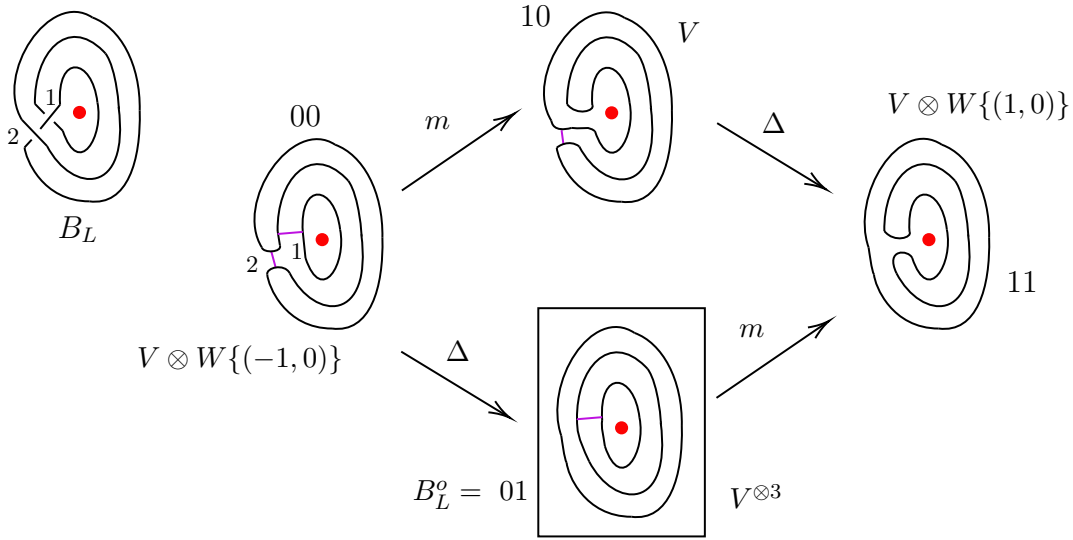}
    \caption{Hypercube of resolutions of a closed braid diagram in annulus.}
    \label{fig:example-hypercube}    
\end{figure}
The oriented resolution $B^{\circ}_{L}$ corresponds to the state $01 \in \{0,1\}^{2}$. Note that $|Z(B^{\circ}_{L})| = 3 = b$, where $b$ is the number strands and the self linking number $sl(B_{L})= -b + n_{+}-n_{-}= -3+1-1=-3$. The labeled resolution configuration $\widehat{\psi}_{Sk}(B_{L}) = (01, v_{-}^{\otimes 3})\in C_{Sk}^{0,-3,-3}(B_{L})$ corresponds to the transverse invariant. 
\begin{figure}[htp]
    \centering
    \input{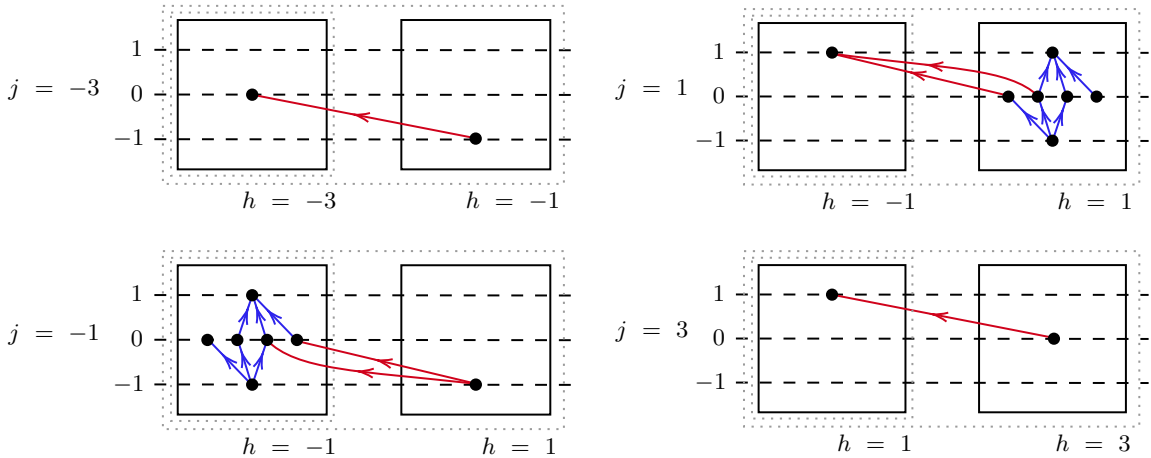}
    \caption{Khovanov chain complex and skein flow category}
    \label{fig:example-skein-flow}    
\end{figure}
Note all generators of $C_{Sk}(B_{L})$ have a homological grading of either $-1$, $0$, or $1$. In the Figure \ref{fig:example-skein-flow}, we gave the Khovanov chain complex of $B_{L}$. As described in Section \ref{section_4}, the Khovanov boundary maps $\partial_{Kh}$ has two components, $\partial_{Kh,0}$ which preserves the homotopical grading is denoted by blue arrows and $\partial_{Kh,-2}$ which decreases the homotopical grading by $2$ are denoted by red arrows in Figure \ref{fig:example-skein-flow}. Note that $\partial_{Sk} = \partial_{Kh,0}$.

For a fixed $j$, the generators of the lowest homotopical grading forms an upward closed full subcategory $\scrC^{j}_{u}(B_{L})$ of $\scrC^{j}_{Kh}(B_{L})$, see Figure \ref{fig:example-maps} for $j=-1$ case. Then we have the quotient map $|\scrC_{Kh}^{j}(B_{L})|\to |\scrC_{u}^{j}(B_{L})|$ by quotienting the cells corresponding to the complementary downward closed subcategory of $\scrC_{u}^{j}(B_{L})$. 
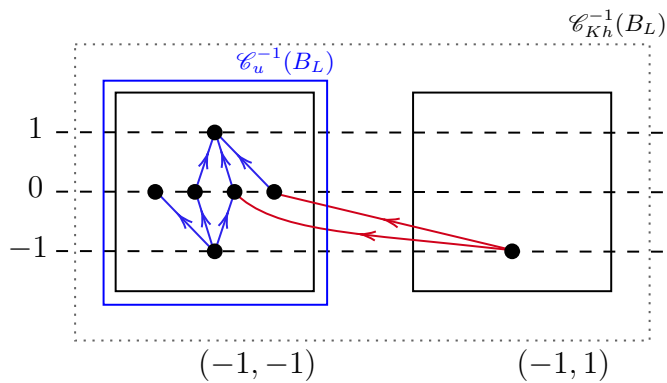
\begin{figure}[htp]
    \centering
    \tikzset{every picture/.style={line width=0.75pt}} 

\begin{tikzpicture}[x=0.75pt,y=0.75pt,yscale=-1,xscale=1]

\draw   (200.5,80) -- (300.43,80) -- (300.43,180.25) -- (200.5,180.25) -- cycle ;
\draw   (350.5,80) -- (450.43,80) -- (450.43,180.25) -- (350.5,180.25) -- cycle ;
\draw  [dash pattern={on 4.5pt off 4.5pt}]  (170.5,130) -- (480.43,130.3) ;
\draw  [dash pattern={on 4.5pt off 4.5pt}]  (170.5,160) -- (480.43,160.3) ;
\draw  [dash pattern={on 4.5pt off 4.5pt}]  (170.5,100) -- (480.43,100.3) ;
\draw [color={rgb, 255:red, 45; green, 35; blue, 235 }  ,draw opacity=1 ]   (250.67,99.7) -- (280.83,130.93) ;
\draw [shift={(262.14,111.58)}, rotate = 46] [color={rgb, 255:red, 45; green, 35; blue, 235 }  ,draw opacity=1 ][line width=0.75]    (7.65,-2.3) .. controls (4.86,-0.97) and (2.31,-0.21) .. (0,0) .. controls (2.31,0.21) and (4.86,0.98) .. (7.65,2.3)   ;
\draw [color={rgb, 255:red, 45; green, 35; blue, 235 }  ,draw opacity=1 ]   (220.67,129.7) -- (250.83,160.93) ;
\draw [shift={(232.14,141.58)}, rotate = 46] [color={rgb, 255:red, 45; green, 35; blue, 235 }  ,draw opacity=1 ][line width=0.75]    (7.65,-2.3) .. controls (4.86,-0.97) and (2.31,-0.21) .. (0,0) .. controls (2.31,0.21) and (4.86,0.98) .. (7.65,2.3)   ;
\draw [color={rgb, 255:red, 45; green, 35; blue, 235 }  ,draw opacity=1 ]   (250.67,99.7) -- (260.17,130.27) ;
\draw [shift={(253.87,110.02)}, rotate = 72.73] [color={rgb, 255:red, 45; green, 35; blue, 235 }  ,draw opacity=1 ][line width=0.75]    (7.65,-2.3) .. controls (4.86,-0.97) and (2.31,-0.21) .. (0,0) .. controls (2.31,0.21) and (4.86,0.98) .. (7.65,2.3)   ;
\draw [color={rgb, 255:red, 45; green, 35; blue, 235 }  ,draw opacity=1 ]   (240.67,129.7) -- (250.17,160.27) ;
\draw [shift={(243.87,140.02)}, rotate = 72.73] [color={rgb, 255:red, 45; green, 35; blue, 235 }  ,draw opacity=1 ][line width=0.75]    (7.65,-2.3) .. controls (4.86,-0.97) and (2.31,-0.21) .. (0,0) .. controls (2.31,0.21) and (4.86,0.98) .. (7.65,2.3)   ;
\draw [color={rgb, 255:red, 45; green, 35; blue, 235 }  ,draw opacity=1 ]   (240.67,129.7) -- (250.67,99.7) ;
\draw [shift={(246.99,110.72)}, rotate = 108.43] [color={rgb, 255:red, 45; green, 35; blue, 235 }  ,draw opacity=1 ][line width=0.75]    (7.65,-2.3) .. controls (4.86,-0.97) and (2.31,-0.21) .. (0,0) .. controls (2.31,0.21) and (4.86,0.98) .. (7.65,2.3)   ;
\draw [color={rgb, 255:red, 45; green, 35; blue, 235 }  ,draw opacity=1 ]   (250.67,159.7) -- (260.67,129.7) ;
\draw [shift={(256.99,140.72)}, rotate = 108.43] [color={rgb, 255:red, 45; green, 35; blue, 235 }  ,draw opacity=1 ][line width=0.75]    (7.65,-2.3) .. controls (4.86,-0.97) and (2.31,-0.21) .. (0,0) .. controls (2.31,0.21) and (4.86,0.98) .. (7.65,2.3)   ;
\draw [color={rgb, 255:red, 208; green, 2; blue, 27 }  ,draw opacity=1 ]   (280.83,130.93) -- (400.83,159.6) ;
\draw [shift={(335.78,144.06)}, rotate = 13.44] [color={rgb, 255:red, 208; green, 2; blue, 27 }  ,draw opacity=1 ][line width=0.75]    (7.65,-2.3) .. controls (4.86,-0.97) and (2.31,-0.21) .. (0,0) .. controls (2.31,0.21) and (4.86,0.98) .. (7.65,2.3)   ;
\draw [color={rgb, 255:red, 208; green, 2; blue, 27 }  ,draw opacity=1 ]   (260.17,130.27) .. controls (280.7,150.67) and (320.03,150.67) .. (400.83,159.6) ;
\draw [shift={(323.87,151.48)}, rotate = 6.83] [color={rgb, 255:red, 208; green, 2; blue, 27 }  ,draw opacity=1 ][line width=0.75]    (7.65,-2.3) .. controls (4.86,-0.97) and (2.31,-0.21) .. (0,0) .. controls (2.31,0.21) and (4.86,0.98) .. (7.65,2.3)   ;
\draw  [color={rgb, 255:red, 0; green, 0; blue, 0 }  ,draw opacity=1 ][fill={rgb, 255:red, 0; green, 0; blue, 0 }  ,fill opacity=1 ] (247,100.13) .. controls (247,98.21) and (248.55,96.66) .. (250.46,96.66) .. controls (252.38,96.66) and (253.93,98.21) .. (253.93,100.13) .. controls (253.93,102.04) and (252.38,103.59) .. (250.46,103.59) .. controls (248.55,103.59) and (247,102.04) .. (247,100.13) -- cycle ;
\draw  [color={rgb, 255:red, 0; green, 0; blue, 0 }  ,draw opacity=1 ][fill={rgb, 255:red, 0; green, 0; blue, 0 }  ,fill opacity=1 ] (277,130.13) .. controls (277,128.21) and (278.55,126.66) .. (280.46,126.66) .. controls (282.38,126.66) and (283.93,128.21) .. (283.93,130.13) .. controls (283.93,132.04) and (282.38,133.59) .. (280.46,133.59) .. controls (278.55,133.59) and (277,132.04) .. (277,130.13) -- cycle ;
\draw  [color={rgb, 255:red, 0; green, 0; blue, 0 }  ,draw opacity=1 ][fill={rgb, 255:red, 0; green, 0; blue, 0 }  ,fill opacity=1 ] (257,130.13) .. controls (257,128.21) and (258.55,126.66) .. (260.46,126.66) .. controls (262.38,126.66) and (263.93,128.21) .. (263.93,130.13) .. controls (263.93,132.04) and (262.38,133.59) .. (260.46,133.59) .. controls (258.55,133.59) and (257,132.04) .. (257,130.13) -- cycle ;
\draw  [color={rgb, 255:red, 0; green, 0; blue, 0 }  ,draw opacity=1 ][fill={rgb, 255:red, 0; green, 0; blue, 0 }  ,fill opacity=1 ] (237,130.13) .. controls (237,128.21) and (238.55,126.66) .. (240.46,126.66) .. controls (242.38,126.66) and (243.93,128.21) .. (243.93,130.13) .. controls (243.93,132.04) and (242.38,133.59) .. (240.46,133.59) .. controls (238.55,133.59) and (237,132.04) .. (237,130.13) -- cycle ;
\draw  [color={rgb, 255:red, 0; green, 0; blue, 0 }  ,draw opacity=1 ][fill={rgb, 255:red, 0; green, 0; blue, 0 }  ,fill opacity=1 ] (217,130.13) .. controls (217,128.21) and (218.55,126.66) .. (220.46,126.66) .. controls (222.38,126.66) and (223.93,128.21) .. (223.93,130.13) .. controls (223.93,132.04) and (222.38,133.59) .. (220.46,133.59) .. controls (218.55,133.59) and (217,132.04) .. (217,130.13) -- cycle ;
\draw  [color={rgb, 255:red, 0; green, 0; blue, 0 }  ,draw opacity=1 ][fill={rgb, 255:red, 0; green, 0; blue, 0 }  ,fill opacity=1 ] (397,160.13) .. controls (397,158.21) and (398.55,156.66) .. (400.46,156.66) .. controls (402.38,156.66) and (403.93,158.21) .. (403.93,160.13) .. controls (403.93,162.04) and (402.38,163.59) .. (400.46,163.59) .. controls (398.55,163.59) and (397,162.04) .. (397,160.13) -- cycle ;
\draw  [color={blue}  ,draw opacity=1 ] (194.41,74) -- (307.15,74) -- (307.15,187.1) -- (194.41,187.1) -- cycle ;
\draw  [color={rgb, 255:red, 100; green, 100; blue, 100 }  ,draw opacity=1 ][dash pattern={on 0.84pt off 2.51pt}] (180.05,55.65) -- (469.8,55.65) -- (469.8,205.15) -- (180.05,205.15) -- cycle ;
\draw  [color={rgb, 255:red, 0; green, 0; blue, 0 }  ,draw opacity=1 ][fill={rgb, 255:red, 0; green, 0; blue, 0 }  ,fill opacity=1 ] (247,160.13) .. controls (247,158.21) and (248.55,156.66) .. (250.46,156.66) .. controls (252.38,156.66) and (253.93,158.21) .. (253.93,160.13) .. controls (253.93,162.04) and (252.38,163.59) .. (250.46,163.59) .. controls (248.55,163.59) and (247,162.04) .. (247,160.13) -- cycle ;

\draw (154.67,91) node [anchor=north west][inner sep=0.75pt]   [align=left] {$\displaystyle 1$};
\draw (154.67,121) node [anchor=north west][inner sep=0.75pt]   [align=left] {$\displaystyle 0$};
\draw (144.67,151) node [anchor=north west][inner sep=0.75pt]   [align=left] {$\displaystyle -1$};
\draw (240.67,208) node [anchor=north west][inner sep=0.75pt]   [align=left] {$\displaystyle ( -1,-1)$};
\draw (401.17,208) node [anchor=north west][inner sep=0.75pt]   [align=left] {$\displaystyle ( -1,1)$};
\draw (260,55.75) node [anchor=north west][inner sep=0.75pt]  [font=\scriptsize,color={blue}  ,opacity=1 ] [align=left] {$\displaystyle {\mathscr{C}}_{u}^{-1}( B_{L})$};
\draw (427.5,36.75) node [anchor=north west][inner sep=0.75pt]  [font=\scriptsize,color={black}  ,opacity=1 ] [align=left] {$\displaystyle {\mathscr{C}}_{Kh}^{-1}( B_{L})$};

\end{tikzpicture}
    \caption{Khovanov chain complex and skein flow category}
    \label{fig:example-maps}    
\end{figure}

\bibliographystyle{amsalpha}
\bibliography{refs}

\end{document}